\documentclass[a4paper,10pt]{article}%

\usepackage[utf8]{inputenc}%
\usepackage[T1]{fontenc}%
\usepackage[english]{babel}%

\usepackage{lmodern}%
\usepackage[final]{graphicx}%
\usepackage{xcolor}%
\usepackage{amsmath,amssymb,amsthm,dsfont}
\usepackage{hyperref}%
\usepackage{enumitem}%
\usepackage[labelfont=bf,labelsep=period]{caption}%

\hoffset = -20pt%
\textwidth = 390pt%

\setlength{\arraycolsep}{0.5pt}%
\allowdisplaybreaks%
\delimitershortfall=1pt%

\newcommand{\dr}{\mathrm{d}}%

\newcommand{\Gr}{\mathrm{G}}%
\newcommand{\hgt}{\mathrm{hgt}}%
\newcommand{\Grh}{\Gr_{\hgt}}%
\newcommand{\Grf}{\Grh^{\mathrm{flip}}}%
\newcommand{\Grex}{\Grh^{\mathrm{excl}}}%
\newcommand{\exl}{\mathrm{ex}_{\mathrm{l}}}%
\newcommand{\brl}{\mathrm{br}_{\mathrm{l}}}%

\newcommand{\Nbb}{\mathbb{N}}
\newcommand{\Rbb}{\mathbb{R}}
\newcommand{\Sbb}{\mathbb{S}}
\newcommand{\Zbb}{\mathbb{Z}}

\newcommand{\Sc}{\mathcal{S}}%

\newcommand{\xb}{\mathbf{x}}%
\newcommand{\Xb}{\mathbf{X}}%
\newcommand{\yb}{\mathbf{y}}%
\newcommand{\Yb}{\mathbf{Y}}%
\newcommand{\zb}{\mathbf{z}}%

\newcommand{\opl}{{\color{red}\oplus}}%
\newcommand{\omi}{{\color{blue}\ominus}}%
\newcommand{\bll}{\bullet}%
\newcommand{\wtild}{\widetilde}%
\newcommand{\ordo}{\mathrm{o}}%
\newcommand{\abs}[1]{\left\lvert #1 \right\rvert}%
\newcommand{\lip}[1]{\left\lfloor #1 \right\rfloor}%
\newcommand{\uip}[1]{\left\lceil #1 \right\rceil}%
\newcommand{\ul}[1]{\underline{#1}}%

\newcommand{\oh}{\frac{1}{2}}%
\newcommand{\emp}[1]{\emph{#1}}%

\numberwithin{equation}{section}%

\theoremstyle{plain}%
\newtheorem{theorem}{Theorem}%
\newtheorem{lemma}{Lemma}%
\newtheorem{prop}{Proposition}%
\newtheorem{corollary}{Corollary}%

\theoremstyle{definition}%
\newtheorem{definition}{Definition}%
\newtheorem{remark}{Remark}%

\newcommand{\qqed}{\hfill$\blacksquare$}%

\renewenvironment{proof}[1][Proof]{\begin{trivlist}%
\item[\hskip \labelsep {\bfseries #1}]}{\qqed\end{trivlist}}%

\DeclareMathOperator{\ch}{\mathrm{ch}}%
\DeclareMathOperator{\Exp}{\mathbf{E}}%
\DeclareMathOperator{\Prob}{\mathbf{P}}%
\DeclareMathOperator{\ind}{\mathds{1}}%

\newlist{AAA}{enumerate}{1}%
\setlist[AAA]{label=(A\arabic*),start=0}%


\newcommand{\bristol}{School of Mathematics, University of Bristol, University Walk, Bristol, BS8 1TW, United Kingdom.}%

\newcommand{\prebristol}{Part of this work was done while the first author was affiliated with the MTA-BME Stochastics Research Group and the Alfr\'ed R\'enyi Institute of the Hungarian Academy of Sciences.}%

\newcommand{\bme}{Department of Stochastics, Budapest University of Technology and Economics, Egry J. u. 1., Budapest, H-1111, Hungary.}%

\newcommand{\mta}{MTA-BME Stochastic Research Group (temporary affiliation).}

\newcommand{\titl}{
Dependent Double Branching~Annihilating Random~Walk
}%

\newcommand{\auths}{
M\'arton Bal\'azs\,\footnote{\bristol\ \prebristol}\\[0.1em]
{\normalsize  \url{m.balazs@bristol.ac.uk}}
\and
Attila L\'aszl\'o Nagy\,\footnote{\bme\ \mta}\\[0.1em]
{\normalsize \url{nagyal@math.bme.hu}}
}%

\newcommand{\dat}{\today}%


\title{\titl}%
\author{\auths}%
\date{\dat}%


\begin{document}
\maketitle%

\begin{abstract}
Double (or parity conserving) branching annihilating random walk, introduced in \cite{Sud90}, is a one-dimensional non-attractive particle system in which positive and negative particles perform nearest neighbor hopping, produce two offsprings to neighboring lattice points and annihilate when they meet. Given an odd number of initial particles, positive recurrence as seen from the leftmost particle position was first proved in \cite{BFMP01} and, subsequently in a much more general setup, in \cite{SS08T}. These results assume that jump rates of the various moves do not depend on the configuration of the particles not involved in these moves. The present article deals with the case when the jump rates are affected by the locations of several particles in the system. Motivation for such models comes from non-attractive interacting particle systems with particle conservation. Under suitable assumptions we establish the existence of the process, and prove that the one-particle state is positive recurrent. We achieve this by arguments similar to those appeared in \cite{SS08T}. We also extend our results to some cases of long range jumps, when branching can also occur to non-neighboring sites. We outline and discuss several particular examples of models where our results apply.
\end{abstract}

\noindent {\bf Keywords}. Non-attractive particle system, long range dependent rates, double branching annihilating random walk, parity conserving, positive recurrence, interface tightness.

\newpage

\tableofcontents

\bigskip

\listoffigures

\newpage

\section{Introduction}

\noindent\emp{Models.} Our object of investigation is the process $\Yb$ of \emp{double branching annihilating random walkers} on the integer lattice $\Zbb$. We consider finitely many positive ($\opl$) and negative ($\omi$) particles which are placed on the integer lattice in such a manner that subsequent particles are of opposite type. We only consider the case of an odd total number of particles. It then follows that the charge of the leftmost particle determines the overall (signed) charge of the whole system. The dynamics consists of two parts:
\begin{description}
\item[(RW)]\hfill\\
Particles perform a nearest neighbor random walk on the integer lattice.
\item[(BR)]\hfill\\
Any type of particle can give birth to two offsprings of the same type, placing them to the two neighboring lattice points, while the branching particle changes its type to the opposite.
\end{description}
When two particles of opposite types meet on a site they are simultaneously annihilated. This can happen both for the (RW) and the (BR) steps. Notice that all possible steps conserve
\begin{itemize}
 \item the property that subsequent particles are of opposite type and
 \item the overall charge of the system.
\end{itemize}
One can look at the process $\Yb$ as an \emp{interface} (boundary) process of the \emp{height function} $\Xb$. To see this, define $\Xb$ on the half-integer lattice $\Zbb+\oh$ as the signed, spatially integrated particle number, i.e.\ $\Yb$ is the discrete gradient of $\Xb$. To fix the integration constant, we require that the values of $\Xb$ are either "0" or "1". As an example, $\Xb$ having the Heaviside configuration (all zeros to the left and all ones to the right of the origin) corresponds to a single positive particle of $\Yb$ at the origin. Figure \ref{fig:dbarw-intro} demonstrates another example.
\begin{figure}[!ht]
\centering
\includegraphics[scale=0.3]{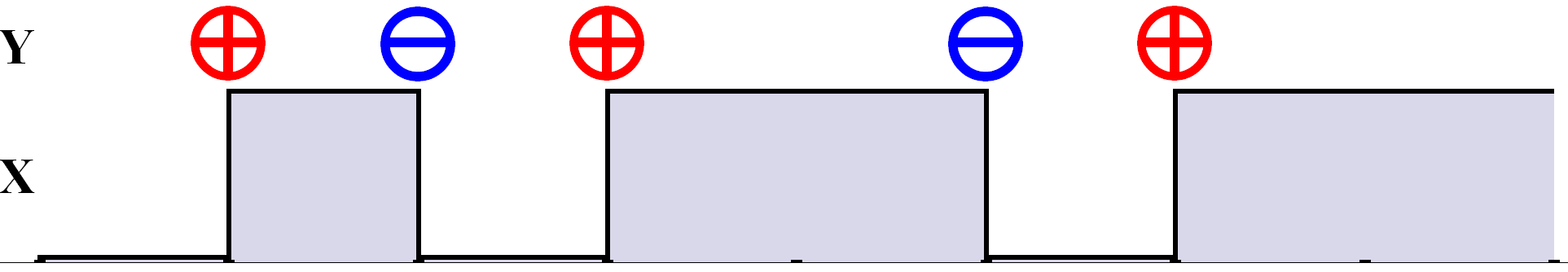}%
\caption[A configuration of double branching annihilating random walkers]
{A possible configuration of double branching annihilating random walkers $\Yb$, and the corresponding height function $\Xb$.}\label{fig:dbarw-intro}%
\end{figure}

It follows that the dynamics described above can be translated into the language of $\Xb$ as:
\begin{description}
\item[(Flip)] (which corresponds to {\bf (RW)})\hfill\\
A height can switch to the value of one of its neighboring heights.
\item[(Excl)] (which corresponds to {\bf (BR)})\hfill\\
An adjacent zero-one (one-zero) pair of heights can exchange values.
\end{description}
When only the \emp{spin-flip} (Flip) jumps are present, and their jump rates are an increasing function of the number of "1" spins close by, the model is the \emp{voter model}. Should the jump rates depend only on nearest neighbor spins, the process becomes the \emp{simple} voter model. A process with only the \emp{exclusion} (Excl) steps is called a \emp{simple exclusion} process. Both of these models were separately under extensive studies in the past decades (see \cite{Lig85,Lig99}). Much less is known when both the (Flip) and the (Excl) types of steps are present in the dynamics at the same time. This is the case one is lead to with the process $\Xb$ that arises from the double branching annihilating walkers $\Yb$. In the sequel we will freely switch between the equivalent descriptions of $\Xb$ and $\Yb$.

Both $\Xb$ and $\Yb$ are in the class of additive and cancellative interacting particle systems (\cite{Gri79}). In many cases a certain monotonicity property is violated in these sort of systems (\cite[pp.\ 71--72, pp.\ 380--384]{Lig85}), making them \emp{non-attractive}. It is due to this lack of monotonicity that direct comparison of the processes via domination arguments is practically impossible. Therefore several problems such as the question of survival or extinction are difficult to treat, and seem to be drastically sensitive to details of the considered branching mechanism (see \cite{BG85,Sud90,BWD91,S00D,S00S,BK11} and further references therein).

\bigskip
\noindent\emp{Earlier results.} Double branching annihilating random walk first appeared in \cite{Sud90}, where it was shown that the process dies out a.s., providing even number of particles are present initially. In terms of $\Xb$, however, one of the first results was a hydrodynamic limit. Considering simple symmetric nearest neighbor exclusion plus translation invariant and finite range dependent rates for the spin-flip jumps, a diffusive scaling leads to the following \emp{reaction--diffusion} type partial differential equation:
\[
\partial_t u(t,x)=\partial_{xx} u(t,x) + F\big(u(t,x)\big),
\]
where $F$ comes from the details of the spin-flip dynamics and $u\in[0,1]$ corresponds to the macroscopic density of particles (\cite{DFL86}, see also \cite[pp.\ 131--132, pp.\ 281--282]{KL99}).

In \cite{BCDFL86} positive recurrence of the Heaviside configuration as seen from the leftmost "1" position was proved for a process with exclusion and asymmetric spin-flip steps, where a "1" neighboring a "0" can result in the "0" flipping into "1" but not vice-versa. This was later generalized to the symmetric spin-flip dynamics ("1" becomes "0" as well) in \cite{BFMP01} for some range of jump rate values. The nearest neighbor condition for the spin-flip dynamics was relaxed in \cite{CD95,BMV07}, but these papers have not dealt with exclusion jumps. \cite{SS08T} then incorporated (long range) exclusion jumps as well in the proof of recurrence. None of the results allowed the jump rates to depend on configuration of particles that are not involved in the jump.

Double branching annihilating random walk is also of main interest in the Physics literature. It is considered as the microscopic model of the following schemes of reactions: $\big\{\opl+\omi\rightarrow\emptyset,\,\opl\rightarrow2\opl+\omi,\,\omi\rightarrow2\omi+\opl\big\}$, which have been under extensive studies in the last decades \cite{CT96,CT98}, and also \cite[Sec.\ 4.6, pp.\ 116--143]{O08}.

\bigskip
\noindent\emp{Motivation and results of this paper}. We first consider the nearest neighbor double branching annihilating random walk process under rather general conditions on how the walking and branching rates depend on the particle configuration. The novelty is that dependence is allowed on states far away from the position where the actual jump takes place.

The motivation for this problem comes from coupling considerations of conservative nearest-neighbor non-attractive interacting particle systems. These are models where particles perform nearest neighbor jumps with rates that depend on the configuration of the initial and the destination site of the jump; no particles are created or annihilated. The most studied versions, like the \emp{asymmetric simple exclusion process}, are attractive: monotonicity of the jump rates exclude the creation of \emp{second class particles} that is, discrepancies between two configurations, in the basic coupling. In slightly more complicated models as e.g.\ the \emp{zero range process} attractivity is not automatic. In non-attractive cases the basic coupling necessarily involves branching of second class particles and second class \emp{anti}particles in a parity-conserving manner. These can later annihilate each other. Moreover, their behavior is heavily influenced by the background process of the first class (that is, ordinary) particles, resulting in an effective attractive potential between the second class particles. It is expected that the second class particles do not proliferate unboundedly even when their number is not conserved in interacting particle systems. This is on the level of intuition, rigorous handling of the second class particle process is currently beyond reach. However, double branching-annihilating random walks with long range dependent rates can serve as a mean-field approximation of the second class particle process, replacing the complicated background process of first class particles by a constant but attractive force between the walkers. Our result is a step towards showing the non-proliferating behaviour, in this mean-field approximation.

In fact, our methods go further than mere attraction. To emphasize the level of generality we mention that examples we can handle include repelling effects in the random walk dynamics as well as unbounded branching rates.

As the construction of the process is far from being trivial under our weak assumptions, we first establish existence of the dynamics by showing that the configuration stays a.s.\ finite for all times.

The main result of the paper is positive recurrence of the singleton state within our general class of long range dependent jump rates for double branching annihilating random walks. It also follows that the stationary distribution sees a finite expected number of particles. We build on methods that were developed in the case of non-dependent rates for the same problem in \cite{SS08T}. These turn out to be robust enough for a significant generalization in terms of jump rate dependence.

Finally we extend the results for long range branching and annihilating processes. In these processes particles perform nearest neighbor, possible dependent random walk, but can put offsprings to non-nearest neighbor places in such a manner that the symmetry of the configuration space is preserved.

\bigskip

\noindent\emp{Organization of the paper.} We give a precise definition of the processes of interest in Section \ref{sec:modnot}. Section \ref{sec:results} involves the main results together with the exact form of our assumptions on the rate functions. Some particular models are outlined in Section \ref{sec:partmodels}. The proofs are postponed to Section \ref{sec:proof}.

\section{The models}\label{sec:modnot}

\subsection{Double branching annihilating random walk}

Define the configuration space
\begin{align}
\Sc=\big\{
\yb\in\{-1,0,+1\}^{\Zbb}\;:\;
 &\textstyle\sum_{i\in\Zbb}\abs{y_i}<\infty \text{ is odd; and if $y_{j}\neq0$, $y_{j'}\neq0$ for some $j<j'$}\nonumber\\
 &\text{such that }\textstyle\sum_{i=j+1}^{j'-1}\abs{y_i}=0, \mbox{ then } y_{j}=-y_{j'}
\big\}.\label{confspaceSc}
\end{align}
We regard $\yb=(y_i)_{i\in\Zbb}\in\Sc$ as a configuration of two types of particles: for an $i\in\Zbb$ we interpret $y_i=+1$ and $y_i=-1$ as the presence of a positive ($\opl$) and negative particle ($\omi$) at lattice point $i$, respectively, while $y_i=0$ means the absence of such particles (hole). The continuous time Markov process
\[
\Yb(t)=(\ldots,Y_{i-1}(t),Y_i(t),Y_{i+1}(t),\ldots)\qquad (t\geq0)
\]
on this space is called the \emp{system of double branching annihilating random walkers} if its (formal) infinitesimal generator can be written in the following way:
\begin{equation}\label{dbarwgenerator}
\Gr := \alpha_1\Gr^{\mathrm{rw}} + \alpha_2\Gr^{\mathrm{br}},
\end{equation}
where $\alpha_1,\alpha_2\geq0$. $\Gr^{\mathrm{rw}}$ acts as
\begin{align}
(\Gr^{\mathrm{rw}} f)(\yb)=
\displaystyle\sum_{i\in\Zbb}
&\left[\ind\{y_i=+1\}
r_i^{\opl}(\yb)+\ind\{y_{i+1}=-1\}\ell_{i+1}^{\omi}(\yb)\right]
(f(\yb-\delta_i+\delta_{i+1})-f(\yb))\nonumber\\
+&\left[\ind\{y_i=-1\}r_i^{\omi}(\yb)+\ind\{y_{i+1}=+1\}
\ell_{i+1}^{\opl}(\yb)\right](f(\yb+\delta_i-\delta_{i+1})-f(\yb)),\label{nnsteppartgenerator}
\end{align}
where $r^{\opl}_{\bll},\ell^{\opl}_{\bll}$ and $r^{\omi}_{\bll},\ell^{\omi}_{\bll}$ correspond to the nearest neighbor, right and left, jumping rates of positive and negative particles, respectively. The branching part $\Gr^{\mathrm{br}}$ of the generator reads as:
\begin{align}
(\Gr^{\mathrm{br}} f)(\yb)=\displaystyle\sum_{i\in\Zbb}
&\ind\{y_i=+1\}b_i^{\opl}(\yb)(f(\yb+\delta_{i-1}-2\delta_i+\delta_{i+1})-f(\yb))\nonumber\\
+&\ind\{y_i=-1\}b_i^{\omi}(\yb)(f(\yb-\delta_{i-1}+2\delta_i-\delta_{i+1})-f(\yb)),
\label{branchingpartgenerator}
\end{align}
where $b^{\opl}_{\bll},b^{\omi}_{\bll}$ are the branching rates of particles having respective charges. Kronecker's delta is denoted by $\delta_i$, that is $\delta_i(j)=1$ if $j=i$ and $0$ otherwise. The generators act on local functions ($f$'s), defined in the usual way (see \cite[Sec.\ 3, Ch.\ I, pp.\ 21]{Lig85}). We emphasize that the nonnegative rates, $r_i^{\circ}(\yb)$'s, $\ell_i^{\circ}(\yb)$'s and $b_i^{\circ}(\yb)$'s ($\circ\in\{\opl,\omi\}$), can depend on the (relative) position of a particle and indeed on the whole configuration of the system, that is in general they can be \emp{non-finite dependent} ones. In Section \ref{sec:results} further regularities will be posed on these rates. Under the right assumptions we will construct the process and show that it a.s.\ has a finite population of particles at any time, which in turn implies that the configuration space \eqref{confspaceSc} is conserved by the dynamics.

Some further notations: let $i_{\mathrm{left}}=\min\{i\in\Zbb:y_i\neq 0\}$ and $i_{\mathrm{right}}=\max\{i\in\Zbb:y_i\neq 0\}$ be the leftmost and rightmost particle position, respectively, of particles. One can assign a \emp{charge} ($\ch$) to a configuration $\yb$ in the following way:
\[
\ch(\yb)=\sum_{i\in\Zbb}y_i.
\]
Notice that in the configuration space $\Sc$ this can only be either $-1$ or $+1$, and agrees with the charge of the leftmost and also of the rightmost particle. Notice that $\ch(\Yb(t))=\ch(\Yb_0)$ holds for every $t\geq0$ by the considered parity conserving branching mechanism.
The \emp{total number of particles} of $\yb\in\Sc$ is denoted by
\[
\abs{\yb}=\sum_{i\in\Zbb}\abs{y_i},
\]
while the particle number process is $(\abs{\Yb(t)})_{t\geq0}$.
Finally we define the \emp{width} $w$ of $\yb$ by letting
\[
w(\yb)=i_{\mathrm{right}}-i_{\mathrm{left}}+1.
\]
In an analogous manner the width process of $(\Yb(t))_{t\geq0}$ is denoted by $W(t)=w(\Yb(t))$.

\subsection{Height function and its interface}\label{sec:heightfunc}

We now set up the \emp{height function} associated with the process $\Yb$. Let $\Sbb$ be the half integer lattice: $\Sbb=\Zbb+\oh$, and for a configuration $\yb\in\Sc$ define $\xb$ by
\[
x_k=\frac{1-\ch(\yb_0)}{2}+\sum_{i=-\infty}^{k-\oh}y_i=\frac{1+\ch(\yb_0)}{2}-\sum_{i=k+\oh}^{+\infty}y_i,
\]
where $k\in\Sbb$. Notice that the sums only have finitely many nonzero terms. This also implies
\begin{equation}\label{heights}
x_k=x_{\oh}+\sum_{i=1}^{k-\oh}y_{i},\qquad x_{-k}=x_{\oh}-\sum_{i=0}^{k-\oh}y_{-i}.
\end{equation}
$\yb\in\Sc$ implies that $\xb$ will be an element of the countable configuration space
\begin{equation*}
\Sc_{\hgt}=\big\{\xb\in\{0,1\}^{\Sbb}:
\text{ the limits }\lim_{i\to-\infty}x_{i+\oh},
\lim_{i\to+\infty}x_{i+\oh}\;\text{ exist and differ}\big\}.
\end{equation*}
In fact, \eqref{heights} provides a one-to-one correspondence between $\Sc$ and $\Sc_{\mathrm{hgt}}$, with inverse relation
\begin{equation}\label{spatderiv}
y_i=x_{i+\oh}-x_{i-\oh}\qquad(i\in\Zbb).
\end{equation}
Notice that $\yb$ defined via \eqref{spatderiv} is in $\Sc$ for every $\xb\in\Sc_{\hgt}$, since an adjacent $01$ ($10$) pair in $\xb$ will result in a $\opl$ ($\omi$) particle in $\yb$. The above discrete gradient expression shows that $\yb$ marks the phase boundaries of the configuration $\xb$. Hence $\yb$ will be called the \emp{interface} associated with $\xb$.

Now define the process $(\Xb(t))_{t\ge0}$ from $(\Yb(t))_{t\ge0}$ as $\xb$ was defined above from $\yb$. It then evolves according to the (formal) infinitesimal generator $\Grh$
\begin{equation}\label{swappingvotergenerator}
\Grh = \alpha_1\Grf+\alpha_2\Grex,
\end{equation}
recalling the parameters $\alpha_1$ and $\alpha_2$ from \eqref{dbarwgenerator}. The spin-flip part $\Grf$ acts on a local function $f$ as
\begin{align}
(\Grf f)(\xb):=
\displaystyle\sum_{k\in\Sbb}
&(1-x_k)\big(x_{k-1}r_{k-\oh}^{\omi}(\yb)+x_{k+1}\ell_{k+\oh}^{\opl}(\yb)\big)\big(f(\xb+\delta_k)-f(\xb)\big)\nonumber\\
+&x_k\big((1-x_{k-1})r_{k-\oh}^{\opl}(\yb)+(1-x_{k+1})\ell_{k+\oh}^{\omi}(\yb)\big)
\big(f(\xb-\delta_k)-f(\xb)\big),\label{voterpartgenerator}
\end{align}
while the exclusion part $\Grex$ is given by
\begin{align}
(\Grex f)(\xb):=\sum_{k\in\Sbb}
&x_{k+1}(1-x_k)b_{k+\oh}^{\opl}(\yb)(f(\xb+\delta_{k}-\delta_{k+1})-f(\xb))\nonumber\\
+&x_k(1-x_{k+1})b_{k+\oh}^{\omi}(\yb)(f(\xb-\delta_{k}+\delta_{k+1})-f(\xb)).\label{exclusionpartgenerator}
\end{align}
In all the above formulae $\yb$ is always attached to $\xb$ in accordance with \eqref{spatderiv}. In order to simplify notation we let for an $l\in\Sbb$:
\begin{equation}
\left.
\begin{aligned}
p_l(\xb)&=
x_l(1-x_{l-1})r_{l-\oh}^{\opl}(\yb)+x_{l-1}(1-x_l)r_{l-\oh}^{\omi}(\yb),\\
q_l(\xb)&=
x_l(1-x_{l-1})\ell_{l-\oh}^{\opl}(\yb)+x_{l-1}(1-x_l)\ell_{l-\oh}^{\omi}(\yb).
\end{aligned}\label{ratepq}
\right\}
\end{equation}
With these notations it is easy to see that the rate at which the $k^{\mathrm{th}}$ bit in $\Xb$ changes to the opposite is $p_{k}(\Xb)+q_{k+1}(\Xb)$, while with the subsequent bit they exchange their values (swap) with rate $b_{k+\oh}^{\opl}(\Yb)$ or $b_{k+\oh}^{\omi}(\Yb)$ according to whether $0$ and $1$ or $1$ and $0$ the $k^{\mathrm{th}}$ and $(k+1)^{\mathrm{th}}$ bits in $\Xb$ were, respectively. The evolution of $\Yb$ and $\Xb$ is indicated in Figure \ref{fig:dbarw-model}.
\begin{figure}[!ht]
\centering
\includegraphics[scale=0.3]{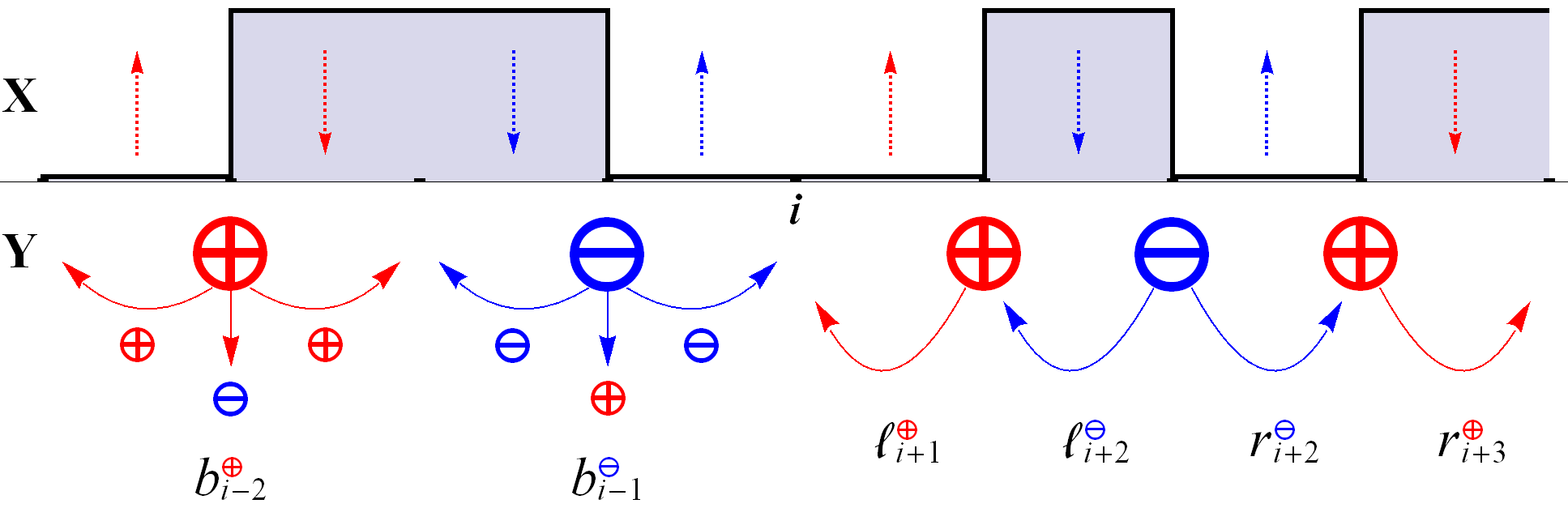}%
\caption[Transitions of double branching annihilating random walkers with heights]
{The possible transitions and the corresponding rates of double branching annihilating random walkers $\Yb$ with the height function $\Xb$.}\label{fig:dbarw-model}%
\end{figure}
Notice also that by our definitions the height function $\Xb$ of $\Yb$ can be regarded as the \emp{(signed) space-time integrated particle flux} of $\Yb$: the value of $-X_k(t)$ increases by one for each positive, and decreases by one for each negative particle that crosses from left to right the space-time path connecting $(\oh,0)$ and $(k,t)$. Particles crossing from right to left have the opposite effect on $-X_k(t)$. In particular, when $\ch(\Yb_0)=-1$ ($\ch(\Yb_0)=+1$) and there are no particles to the right (left) of the origin initially, then $X_{\oh}(0)=0$ and $-X_{\oh}(t)$ counts the total signed charge that jumped over the bond $(0,1)$ up to time $t$.

Finally we define
\[
\wtild{\Yb}=(\wtild{\Yb}(t))_{t\geq0}=(\wtild{Y}_0(t),\wtild{Y}_1(t),\ldots,\wtild{Y}_n(t),\ldots)_{t\geq0}
\]
as $(\Yb(t))_{t\geq0}$ as seen from its leftmost particle position, that is $\wtild{Y}_n(t)=Y_{i_{\mathrm{left}}+n}(t)$ ($t\geq0$), for every $n\in\Zbb^+_0$. Hence the configuration space of $\wtild{\Yb}$ is
\begin{equation*}
\wtild{\Sc}:=\big\{\wtild{\yb}\in\Sc \text{ such that $\wtild{y}_0\neq0$ and $\wtild{y}_i=0$ if $i<0$}\big\}.
\end{equation*}
Similarly for the height function,
\[
\wtild{\Xb}=(\wtild{\Xb}(t))_{t\geq0}=(\wtild{X}_{\oh}(t),\wtild{X}_{\oh+1}(t),\ldots,\wtild{X}_{m}(t),\ldots)_{t\geq0}
\]
is the process of $(\Xb(t))_{t\geq0}$ as seen from its leftmost discrepancy, that is $\wtild{X}_{m}(t)=X_{i_{\mathrm{left}}+m}(t)$ ($t\geq0$), where $m\in\Sbb^+$ is a positive half integer. The corresponding configuration space reads as:
\begin{align*}
\wtild{\Sc}_{\hgt}=\big\{\wtild{\xb}\in\Sc_{\hgt}\text{ such that }
\text{either }&\wtild{x}_{-\oh}=0=1-\wtild{x}_{\oh}\text{ and $\wtild{x}_{i-\oh}=0$ if $i<0$}\\
\text{or }&\wtild{x}_{-\oh}=1=1-\wtild{x}_{\oh}\text{ and $\wtild{x}_{i-\oh}=1$ if $i<0$}\big\}.
\end{align*}
It is these processes for which recurrence statements can be established.
Recall that a countable Markov process is said to be \emp{ergodic} iff it has a unique stationary distribution.
In Section \ref{sec:results} assumptions \eqref{assump:R21a} (from \ref{A1}), \eqref{assump:R22} (from \ref{A2}) and \ref{A0} will guarantee $\wtild{\Xb}$ ($\wtild{\Yb}$) to be an irreducible, countable Markov process on $\wtild{\Sc}$ ($\wtild{\Sc}_{\hgt}$), respectively. Hence positive recurrence will be equivalent to ergodicity in our context. When $\wtild{\Xb}$ ($\wtild{\Yb}$) is positive recurrent then it is also said to be \emp{interface tight} (\emp{stable}), cf.\ \cite[Def.\ 7, pp.\ 63]{SS08V} (\cite[pp.\ 166]{SS08T}), respectively.

\section{Main results}\label{sec:results}

All results concerning the process $(\Yb(t))_{t\geq0}$ are collected in this section. Further results on an extension of the model are also provided in Subsection \ref{sec:nnnbranching}. From now on we denote by $\Yb_0=\Yb(0)\in\Sc$ and $\Xb_0=\Xb(0)\in\Sc_{\hgt}$ a fixed initial configuration of the process $(\Yb(t))_{t\geq0}$ and $(\Xb(t))_{t\geq0}$, respectively.

In this section we will take a total of six assumptions (\ref{A0}--\ref{A5}), concerning the rate functions which appeared in the definition of infinitesimal generators \eqref{nnsteppartgenerator} and \eqref{branchingpartgenerator}. We will make comments and explain these assumptions in detail, however, the exact role of them will only become clear in the details of proofs in Section \ref{sec:proof}. We formulate the first three assumptions which are needed for the first two subsections.
\begin{AAA}
\item\label{A0} \emp{Translation invariance}.\\
For every $i\in\Zbb$ we have
\[
r_{i}^{\circ}(\yb)=r_{i+1}^{\circ}(\zb),\, l_{i}^{\circ}(\yb)=l_{i+1}^{\circ}(\zb),\text{ and }\,
b_{i}^{\circ}(\yb)=b_{i+1}^{\circ}(\zb)\qquad(\circ\in\{\opl,\omi\}),
\]
whenever $\yb,\zb\in\Sc$ are such that $z_j=y_{j-1}$ ($j\in\Zbb$).
\item\label{A1} \emp{Bounds on the random walk rates}.\\
We require
\begin{subequations}
\begin{align}
\ul{s}&=\inf_{i\in\Zbb,\,\yb\in\Sc:y_i\neq0}\big[\min\{r_{i}^{\opl}(\yb),\ell_{i}^{\opl}(\yb)\}+\min\{r_{i}^{\omi}(\yb),\ell_{i}^{\omi}(\yb)\}\big] \label{assump:R21a}\\
&<\sup_{i\in\Zbb,\,\yb\in\Sc:y_i\neq0}\big[r_{i}^{\opl}(\yb)+\ell_{i}^{\opl}(\yb)+r_{i}^{\omi}(\yb)+\ell_{i}^{\omi}(\yb)\big]\leq 1\label{assump:R21b}
\end{align}
\end{subequations}
to hold for some $0< \ul{s} < 1$.
\item\label{A2} \emp{Bounds and asymptotics on the branching rates}.\\
First
\begin{equation}\label{assump:bdrift}
\ch(\yb)\left[\sum_{i\in\Zbb}b_{i}^{\opl}(\yb)-\sum_{j\in\Zbb}b_{j}^{\omi}(\yb)\right] \leq \bar{d}\abs{\yb}
\end{equation}
holds for some $0\leq\bar{d}\in\Rbb$. Second, for every $n\in\Zbb^+$
\begin{subequations}\label{assump:R22}
\begin{align}
0<&\inf_{i\in\Zbb,\,\yb\in\Sc:y_i\neq0,\,\abs{\yb}\leq n}\big[b_{i}^{\opl}(\yb)+b_{i}^{\omi}(\yb)\big],
\label{assump:R22a}\\
&\sup_{i\in\Zbb,\,\yb\in\Sc:\abs{\yb}=n}\big[b_{i}^{\opl}(\yb)+b_{i}^{\omi}(\yb)\big]
\leq B_n<+\infty\label{assump:R22b}
\end{align}
\end{subequations}
holds. Moreover, let $B(N):=\max_{1\leq n\leq N}(n\cdot B_n)$ for $N\in\Zbb^+$, then
\begin{subequations}\label{assump:rateB}
\begin{align}
\sum_{N=1}^{+\infty}\frac{1}{B(N)}=+\infty,\text{ and there exists}&\text{ a $0<\bar{D}$ such that}\label{assump:rateBa}\\
\limsup_{N\to+\infty}\frac{\max_{1\leq n\leq N}(B_n)}{N}&<\bar{D}<+\infty.\label{assump:rateBb}
\end{align}
\end{subequations}
\end{AAA}
We make some comments on the above assertions.
\begin{remark}\leavevmode
\begin{itemize}
\item Assumptions \eqref{assump:R21a} and \eqref{assump:R22a} express, in some sense, the non-vanishing of the dynamics for any configuration.
\item The constant $1$ for the upper bound in \eqref{assump:R21b} is of course artificial, and can be replaced with any positive number by rescaling time.
\item The decay condition posed to the function $B(\bll)$ in \eqref{assump:rateBa} is essential for the process to be well-defined (see Proposition \ref{propwelldefined}), while \eqref{assump:rateBb} is necessary for the width estimate (see Proposition \ref{propvariance}). This latter one together with \eqref{assump:bdrift} and the bounds in \eqref{assump:R21b} will be used in the proof of Lemma \ref{lemma} as well, thus constitute as cornerstones to the recurrence results in Subsection \ref{sec:tightness}.
\item Notice that in \ref{A2}, assumptions \eqref{assump:rateBa} and \eqref{assump:rateBb} are (logically) independent of each other in the sense that neither of them implies the other.
\end{itemize}
\end{remark}

\subsection{Bounds on the number of steps and the width process}\label{sec:construction}

In the first assertion under mild conditions we claim that our process is well-defined.
\begin{prop}\label{propwelldefined}
Assume \eqref{assump:R21b} from \ref{A1}, and that \eqref{assump:R22b}, \eqref{assump:rateBa} hold from \ref{A2}. Then the process $(\Yb(t))_{t\geq0}$ takes a.s.\ only finitely many steps in every finite time interval. In particular it follows that $\Yb(t)\in\Sc$ holds for every $t>0$ as well.
\end{prop}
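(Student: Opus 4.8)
The plan is to build the process as the minimal jump process determined by its embedded chain and exponential holding times, and then to show that its explosion time is almost surely infinite. The key structural observation is that the particle number $\abs{\Yb(t)}$ increases only at branching events, and by at most $2$ at each such event: a branching turns one particle into three, but any directly adjacent particle of the opposite charge is annihilated, so $\abs{\Yb}$ changes by $+2$, $0$, or $-2$; a random-walk step leaves $\abs{\Yb}$ unchanged or, upon annihilation, decreases it by $2$. Hence, writing $K(t)$ for the number of branching events up to time $t$, one has $\abs{\Yb(t)}\leq\abs{\Yb_0}+2K(t)$ at all times before a possible explosion, and all transitions manifestly preserve the parity and the alternating-charge structure defining $\Sc$.

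First I would bound the branching intensity. By \eqref{assump:R22b}, whenever the configuration has $n$ particles each occupied site branches at rate $b_i^{\opl}(\yb)+b_i^{\omi}(\yb)\leq B_n$, so the total branching rate is at most $nB_n\leq B(n)$. Since $B(\bll)$ is nondecreasing and $\abs{\Yb(t)}\leq\abs{\Yb_0}+2K(t)$, the rate at which $K$ increases is bounded above by $B\big(\abs{\Yb_0}+2K(t)\big)$. Consequently $K$ is stochastically dominated by the pure birth process $\wtild K$ that jumps from $k$ to $k+1$ at rate $B(\abs{\Yb_0}+2k)$: concretely, between two consecutive branchings the particle number cannot exceed the value it had right after the previous branching, so each inter-branching waiting time dominates an independent $\mathrm{Exp}\big(B(\abs{\Yb_0}+2k)\big)$ variable.

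Next I would invoke the classical non-explosion criterion for pure birth chains: the $k$-th jump time of $\wtild K$ is a sum of independent exponentials and tends to $+\infty$ almost surely precisely when $\sum_k 1/B(\abs{\Yb_0}+2k)=+\infty$. Monotonicity of $B(\bll)$ gives $1/B(\abs{\Yb_0}+2k)\geq\oh\big(1/B(\abs{\Yb_0}+2k)+1/B(\abs{\Yb_0}+2k+1)\big)$, so this series is at least $\oh\sum_{N\geq\abs{\Yb_0}}1/B(N)$, which diverges by \eqref{assump:rateBa}. Therefore $K(t)<+\infty$ for every $t$ almost surely, i.e.\ only finitely many branchings occur on any finite interval. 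To finish, fix $[0,T]$: then $\abs{\Yb(t)}\leq M:=\abs{\Yb_0}+2K(T)<+\infty$ throughout, and by \eqref{assump:R21b} each particle walks at total rate at most $1$, so the random-walk jump rate never exceeds $\alpha_1 M$. A jump process whose rate is uniformly bounded by a constant on a finite interval makes only finitely many jumps, being dominated by a Poisson process of that rate; hence the total number of steps in $[0,T]$ is almost surely finite, and the conservation $\Yb(t)\in\Sc$ follows since each single transition preserves the defining properties of $\Sc$.

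I expect the principal obstacle to be making the domination $K\preceq\wtild K$ fully rigorous, because the branching intensity of $\Yb$ depends on the entire configuration and fluctuates between successive branchings through the annihilating random-walk moves, so it is not a clean function of $K$ alone. The inequality $\abs{\Yb(t)}\leq\abs{\Yb_0}+2K(t)$ together with the monotonicity of $B(\bll)$ is exactly what permits replacing the true intensity by the deterministic envelope $B(\abs{\Yb_0}+2k)$, thereby reducing non-explosion of the branching count to the divergence condition \eqref{assump:rateBa}; the roles of \eqref{assump:R22b} (supplying the per-particle branching bound) and of \eqref{assump:R21b} (capping the walking rate) are precisely to close the per-particle estimate and to absorb the random-walk steps afterwards.
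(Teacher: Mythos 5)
Your proof is correct and takes essentially the same route as the paper: both arguments rest on the observation that each step raises the particle number by at most $2$, so that the relevant jump count is stochastically dominated by a pure-birth process with rate envelope of order $B(\abs{\Yb_0}+2n)$ after $n$ jumps, and non-explosion then follows from the divergence condition \eqref{assump:rateBa}. The only cosmetic differences are that you handle branchings and random-walk steps in two stages (dominating branchings first, then absorbing the walk steps via a Poisson bound using \eqref{assump:R21b}), whereas the paper dominates the total step count in one go with rate $\alpha_1(\abs{\Yb_0}+2n)+\alpha_2 B(\abs{\Yb_0}+2n)$, and that you invoke the classical pure-birth non-explosion criterion where the paper re-derives it through Doob's martingale convergence theorem.
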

Notice that \eqref{assump:rateBa} itself does not imply any bound on the growth of the branching rates. For instance let $\iota(x):=x(1+2^x)$, where $x\in\mathbb{N}$. Then the function
\begin{equation*}
B(N)=\sum_{j=0}^{+\infty}\ind\big\{\iota^{(j)}(2)\leq N<\iota^{(j+1)}(2)\big\}
\,\iota^{(j)}(2)\cdot 2^{\iota^{(j)}(2)}\quad (N\in\mathbb{N})
\end{equation*}
satisfies \eqref{assump:rateBa}, where $\iota^{(j)}$ is the $j^{\mathrm{th}}$ iterate of $\iota$ ($j\in\Zbb^+$), and $\iota^{(0)}\equiv1$. Indeed, $B$ takes the constant value $\iota^{(j)}(2)\cdot 2^{\iota^{(j)}(2)}$ on the interval $[\iota^{(j)}(2),\,\iota\big(\iota^{(j)}(2)\big)]$, hence the sum of $1/B(N)$ values above each of these intervals is $1$. Now, setting the branching rates $b^{\opl}_i(\yb),\,b^{\omi}_i(\yb)$ be equal to $B(n)/n$ whenever $\abs{\yb}=n\geq 1$ and $y_i\neq 0$, it easily follows that along the subsequence $N_j=\iota^{(j)}(2)$,
$B\big(\iota^{(j)}(2)\big)/(\iota^{(j)}(2)\big)^2$ blows up as $j\to+\infty$, hence condition \eqref{assump:rateBb} cannot hold. Of course, the exponential choice in the definition of function $\iota$ was artificial and can be replaced by any other fast increasing sequence.

The above example thus shows that \eqref{assump:rateBa} might not prevent the expected size of the width of $(\Yb(t))_{t\geq0}$ from blowing up in finite time. However, the situation is much better under \eqref{assump:rateBb}, namely, one can establish an estimate to the width process $(W(t))_{t\geq0}$.
\begin{prop}\label{propvariance}
Under assumptions \eqref{assump:R21b} from \ref{A1}, \eqref{assump:R22b} and \eqref{assump:rateBb} from \ref{A2}, the $r^{\mathrm{th}}$ moment of both $\abs{\Yb(t)}$ and $W(t)$ are finite for each $t>0$ and for every $r>0$.
\end{prop}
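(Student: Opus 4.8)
The plan is to deduce everything from a bound on the width, using the elementary but crucial inequality
\[
\abs{\Yb(t)}\le W(t),
\]
which holds deterministically because the particles occupy distinct sites of the interval $[i_{\mathrm{left}},i_{\mathrm{right}}]$, an interval containing exactly $W(t)$ sites. Thus it suffices to show $\Exp[W(t)^r]<\infty$ for every positive integer $r$ (a non-integer $r$ is absorbed via $W^r\le 1+W^{\uip{r}}$), and then $\Exp[\abs{\Yb(t)}^r]\le\Exp[W(t)^r]$ follows at once.

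The structural observation driving the estimate is that the width can grow \emph{only} through the two extreme particles. An interior particle, one sitting at a site $i$ with $i_{\mathrm{left}}<i<i_{\mathrm{right}}$, keeps all the sites it touches (under both a walk and a branching, which affect only $i-1,i,i+1$) inside $[i_{\mathrm{left}},i_{\mathrm{right}}]$, so it never enlarges the occupied interval; in particular the possibly enormous interior branching rate is irrelevant to $W$, producing only annihilations rather than net growth. Width increases by at most one per step, and only when the rightmost particle steps right, the leftmost steps left, or one of the two extreme particles branches outward (the sole exception being a single-particle configuration, whose branching raises $W$ by two at a rate bounded by a constant). Bounding each extreme walking rate by $1$ via \eqref{assump:R21b} and each extreme branching rate by $B_{\abs{\yb}}$ via \eqref{assump:R22b}, the total rate of width-increasing transitions out of a configuration $\yb$ is at most $\Lambda(\yb):=2\alpha_1+2\alpha_2 B_{\abs{\yb}}$. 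I would then linearise this: \eqref{assump:rateBb} furnishes a constant $C$ with $B_n\le\bar{D}n+C$ for all $n\ge1$, and since $\abs{\yb}\le w(\yb)$ this gives $\Lambda(\yb)\le a+b\,w(\yb)$ with $a:=2\alpha_1+2\alpha_2 C$ and $b:=2\alpha_2\bar{D}$.

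Consequently, applying the generator to the functional $\yb\mapsto w(\yb)^r$ and discarding the downward (inward-step and annihilating) transitions, which only lower the value, yields
\[
(\Gr\,w^r)(\yb)\le \Lambda(\yb)\big((w+2)^r-w^r\big)\le C_r\,\big(a\,w^{r-1}+b\,w^r\big)
\]
for a constant $C_r$ depending only on $r$ — precisely the generator inequality of a linear birth process with immigration. To make this rigorous while sidestepping a priori well-posedness (note that \eqref{assump:rateBa} is \emph{not} assumed here), I would run the minimal process up to its explosion time $\zeta$, stop it at $\tau_k:=\inf\{t:W(t)\ge k\}$, and apply Dynkin's formula to the now bounded functional on $[0,t\wedge\tau_k]$, obtaining
\[
\Exp\big[W(t\wedge\tau_k)^r\big]\le W(0)^r+C_r\!\int_0^t\!\Big(a\,\Exp\big[W(s\wedge\tau_k)^{r-1}\big]+b\,\Exp\big[W(s\wedge\tau_k)^r\big]\Big)\,\dr s.
\]
Inducting on $r$ (the case $r=0$ being trivial) and invoking Gronwall's inequality bounds the right-hand side by a finite quantity independent of $k$. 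Letting $k\to\infty$ forces $\zeta=\infty$ almost surely — were $\Prob(\zeta\le t)>0$, then $W(t\wedge\tau_k)=k\to\infty$ on that event would violate the uniform bound — and monotone convergence delivers $\Exp[W(t)^r]<\infty$, hence $\Exp[\abs{\Yb(t)}^r]<\infty$.

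The main obstacle is conceptual and lives entirely in the second paragraph: recognising that the particle number is slaved to the width through $\abs{\Yb}\le W$, so that the fast (order $\abs{\Yb}^2$) interior branching cannot cause net proliferation, and that width growth is governed solely by the two extreme particles at a rate that \eqref{assump:rateBb} renders linear. Once this decoupling is secured, the analytic part — the Gronwall–induction estimate on the stopped process together with the passage to the limit — is routine.
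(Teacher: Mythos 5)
Your proof is correct, but it follows a genuinely different route from the paper's. You share the structural core of the paper's argument --- the deterministic bound $\abs{\Yb(t)}\le W(t)$, the observation that the width grows only through the two extremal particles (by at most $2$ per step), and the use of \eqref{assump:R21b}, \eqref{assump:R22b} and \eqref{assump:rateBb} to make the width-increasing rate affine-linear in the width --- but you then proceed analytically: apply the generator to $w^r$, stop at $\tau_k$, and run Dynkin's formula with induction on $r$ and Gronwall, extracting non-explosion at the end from the $k$-uniform bound. The paper instead proceeds probabilistically: Definition \ref{defmaxwidthprocess} builds an explicit dominating linear pure-birth process $Q^{w_0}$, with exponential holding times of mean $1/(K(w_0+n))$ and $K$ chosen via \eqref{assump:rateBb}, establishes the pathwise coupling $W(t)\le Q^{w_0}(t)$, and then gets all moments of $Q^{w_0}(t)$ from the Chernoff--Markov tail estimate \eqref{prop2:turbomarkov} with $\lambda>2K\uip{r}$. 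Your version buys a more careful treatment of well-posedness --- you rightly note that \eqref{assump:rateBa} is \emph{not} assumed here and therefore work with the minimal process, a point the paper passes over silently --- whereas the paper's version buys a pathwise dominating object that is reused downstream: the bound $W(s)\le Q^{w_0}(s)\le Q^{w_0}(t)$ drives the uniform integrability estimate \eqref{ineq:unifint} in the proof of Lemma \ref{lemmagenerator}, and ``a similar argument'' is invoked again for \eqref{gapisbig} in Proposition \ref{veryuntight}; your fixed-time moment bounds would need to be upgraded to a running-supremum statement to serve those later purposes. One cosmetic slip: since $W(t\wedge\tau_k)$ is not monotone in $k$, the final limit passage is Fatou's lemma (using $W(t\wedge\tau_k)\to W(t)$ a.s., which holds because bounded width forces a bounded total jump rate, at most $\alpha_1 k+\alpha_2 k B_k$ by \eqref{assump:R22b}, so $\tau_k\to\infty$), not monotone convergence; this does not affect the validity of the argument.
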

As a consequence of these assertions we can conclude that under assumptions \eqref{assump:R21b}, \eqref{assump:R22b} and \eqref{assump:rateB}, the number of steps taken by $(\Yb(t))_{t\geq0}$ is a.s.\ finite as well as its width process has finite expectation for every time.

We notice that the above results translate naturally to ones for the process $(\Xb(t))_{t\geq0}$, as well.

\subsection{Positive recurrence of the singleton state}\label{sec:tightness}

We begin with introducing two other conditions, with explanations following them. First
\begin{AAA}[resume]
\item\label{A3} \emp{Impact and long range dependence of the interaction}.\\
For every $L\in\Zbb^+$, $\hat{\yb}\in\Sc$ and $i\in[i_{\mathrm{left}},i_{\mathrm{right}}]$, we have
\[
|r_i^{\circ}(\hat{\yb})-r_{i}^{\circ}(\yb)|+
|\ell_i^{\circ}(\hat{\yb})-\ell_{i}^{\circ}(\yb)|+
|b_i^{\circ}(\hat{\yb})-b_{i}^{\circ}(\yb)| \leq H(\abs{\yb},L),
\]
where $i_{\mathrm{left}}$ and $i_{\mathrm{right}}$ are the leftmost and rightmost particle positions of $\hat{\yb}$, respectively, $\circ\in\{\opl,\omi\}$, and $\yb\in\Sc$ is such that
\begin{description}
\item if $\ch(\hat{\yb})=+1$, then $(\hat{y}_i-y_i)\ind\{i\in[i_{\mathrm{left}}-L,+\infty)\}=0$, while
\item if $\ch(\hat{\yb})=-1$, then $(\hat{y}_i-y_i)\ind\{i\in(-\infty,i_{\mathrm{right}}+L]\}=0$.
\end{description}
Furthermore, $H:\Zbb^+\times\Zbb^+\to\Rbb^+_0$ from above satisfies
\begin{equation}\label{assump:3decay}
\lim_{L\to+\infty}H(N,L)=0
\end{equation}
for every fixed $N\in\Zbb^+$.
\begin{figure}[!ht]
\centering
\includegraphics[scale=0.16]{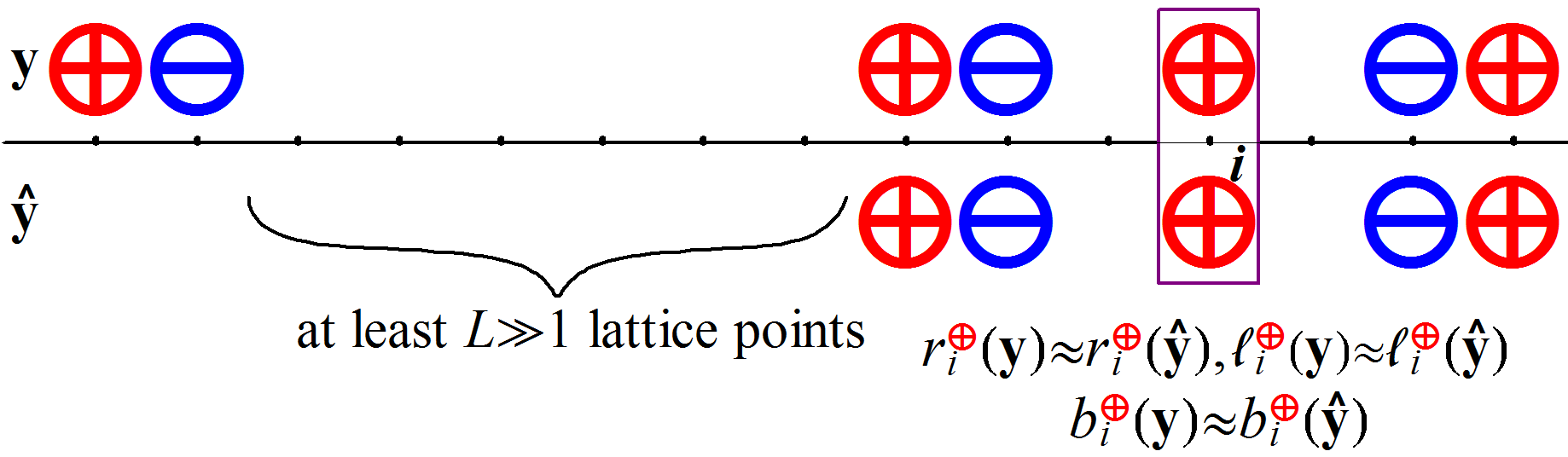}%
\caption[The meaning of assumption \ref{A3}]
{Condition \ref{A3} expresses that if a bunch of particles form an isolated island between particles far away at least one direction then the ``inner'' system of particles is insensitive of remote changes.}\label{fig:dbarw-assump-A3}%
\end{figure}

\item\label{A4} \emp{Strength of the interaction between random walkers}.
\begin{enumerate}[label=(A4\alph*)]
\item\label{A4a}
For every fixed $\xb\in\Sc_{\hgt}$ and $k<l\in\Sbb$ we have
\begin{equation}\label{assump:attraction}
p_k(\xb)+q_l(\xb) \geq q_k(\xb)+p_l(\xb),
\end{equation}
where there are no particles in $(k,l)$ and none of the rates ($p$, $q$) vanish either on $k$ or on $l$, \\
\emp{or}
\item\label{A4b}
\eqref{assump:attraction} holds only for $k<l\in\Sbb$ for which $y_{k-\oh}=\opl$ and $y_{l-\oh}=\omi$.

In addition we also have:
\begin{equation}\label{eq:supplA4}
\begin{array}{cll}
&p_{i_{\mathrm{right}}+\oh}(\xb)=\displaystyle\min_{k\in\Sbb,\,p_k(\xb)\neq0}p_k(\xb),
&\;q_{i_{\mathrm{right}}+\oh}(\xb)=\displaystyle\max_{k\in\Sbb,\,q_k(\xb)\neq0}q_k(\xb)\;\text{ and }\\
&p_{i_{\mathrm{left}}+\oh}(\xb)=\displaystyle\max_{k\in\Sbb,\,p_k(\xb)\neq0}p_k(\xb),
&\;q_{i_{\mathrm{left}}+\oh}(\xb)=\displaystyle\min_{k\in\Sbb,\,q_k(\xb)\neq0}q_k(\xb),
\end{array}
\end{equation}
if $\ch(\yb)=+1$ and $\ch(\yb)=-1$, respectively.
\begin{figure}[!ht]
\centering
\includegraphics[scale=0.16]{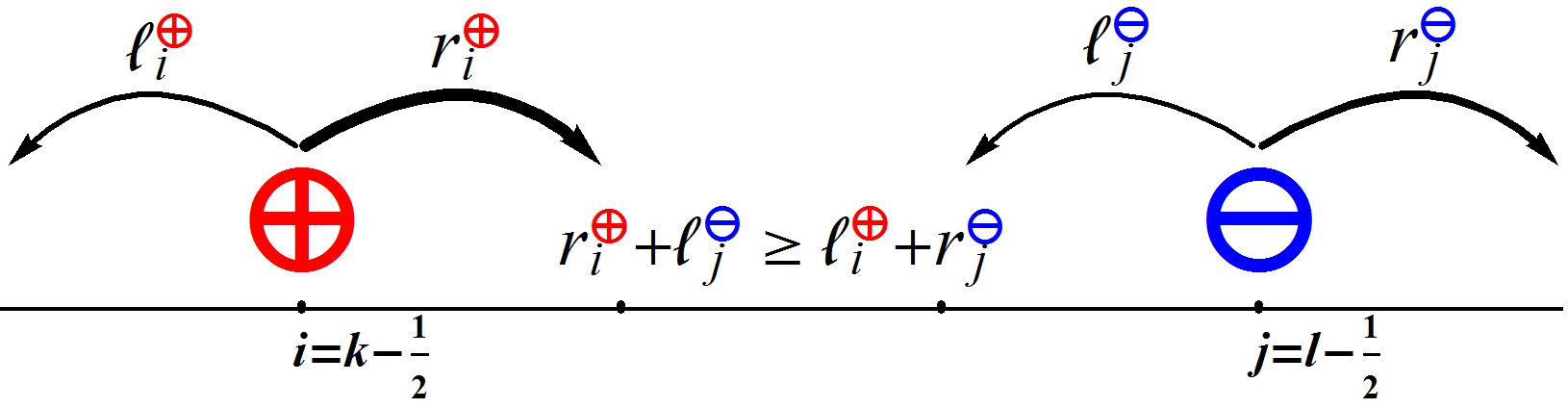}%
\caption[An illustration to assumption \ref{A4}]
{Condition \ref{A4} states that two consecutive particles are more likely to jump towards, rather than against each other.}\label{fig:dbarw-assump-A4}%
\end{figure}
\end{enumerate}
\end{AAA}

A bit more explanations are required to be appended to \ref{A3} and \ref{A4} since these assumptions are far from being technical ones.
In words \ref{A3} tells us that for a fixed bunch of particles situated close to each other, having width $i_{\mathrm{right}}-i_{\mathrm{left}}+1$, the effect of particles far away is decaying in distance. For an illustration of condition \ref{A3} see Figure \ref{fig:dbarw-assump-A3}. For positive recurrence it is clear that there cannot be odd number of straggling particles far from the others: eventually the leftmost particle must find the rightmost one. The ``attraction'' is incorporated to the interaction by \ref{A4a} (see Figure \ref{fig:dbarw-assump-A4}). However, this can be weakened and it is still possible to work with repulsive effects between particles under \ref{A4} (see \ref{A4b} and the related models of Section \ref{sec:partmodels}).

The result of this section is the following theorem. In there we have to impose one more condition that connects the general strength of the random walk part of the dynamics and that of the branching dynamics. This is needed in our proofs to make sure particles do meet often enough. Notice that this condition still allows unbounded branching rates in some specific manner (see \ref{A2} and the rate instances in the third paragraph of Subsection \eqref{sec:brrates}). It would be interesting to see if, and how, ergodicity fails when this condition is not met, but this question is beyond the reach of our current techniques.
\begin{theorem}\label{tightness}
Under assumptions \ref{A0}--\ref{A4} and further assuming that $\alpha_1\ul{s}>2\alpha_2\bar{d}$ holds, the processes $(\wtild{\Yb}(t))_{t\geq0}$ and $(\wtild{\Xb}(t))_{t\geq0}$ (see Subsection \ref{sec:heightfunc}) are ergodic on $\wtild{\Sc}$ and on $\wtild{\Sc}_{\hgt}$, respectively.
\end{theorem}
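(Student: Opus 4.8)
The plan is to establish positive recurrence of $\wtild{\Yb}$ (equivalently of $\wtild{\Xb}$, via the bijection \eqref{spatderiv}) by a Foster--Lyapunov drift argument; since both processes are irreducible countable Markov chains, positive recurrence coincides with ergodicity, so this suffices. The decisive structural observation is that in the co-moving frame the leftmost particle sits at the origin, and because consecutive particles alternate in sign the particle number never exceeds the width, i.e.\ $\abs{\wtild{\yb}}\le W(\wtild{\yb})$. Consequently every set $\{\wtild{\yb}:W(\wtild{\yb})\le M\}$ is \emph{finite} in $\wtild{\Sc}$, so it is enough to produce a Lyapunov function $V\ge0$ with finite level sets, a finite set $F$, and an $\varepsilon>0$ with $\Grh V\le-\varepsilon$ off $F$. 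Propositions \ref{propwelldefined} and \ref{propvariance} guarantee that the process takes only finitely many steps on bounded intervals and has finite width moments, which is precisely what legitimizes the associated Dynkin martingale and the optional-stopping step underlying Foster's criterion.

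I would take $V$ to be a measure of the spread of the configuration built on the width (equivalently, on the rightmost particle position in the co-moving frame), and decompose $\Grh V=\alpha_1\Grf V+\alpha_2\Grex V$ into random-walk and branching contributions. For the random-walk part, assumption \ref{A4} is the engine: the inequality \eqref{assump:attraction}, reinforced by the boundary ordering \eqref{eq:supplA4}, biases the two extremal particles inward so that the spread contracts, and the guaranteed contraction rate is quantified by $\ul{s}$ through \eqref{assump:R21a}, giving a bound of the shape $\alpha_1\Grf V\le-\alpha_1\ul{s}\,(\cdots)$. The long-range dependence must then be tamed: assumption \ref{A3} with the decay \eqref{assump:3decay} ensures that, for wide configurations, the extremal rates differ from their idealized isolated-island values by an amount tending to $0$, so the contraction estimate survives the dependence up to a vanishing error.

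The branching part is where the real difficulty lies, and it is the content of Lemma \ref{lemma}. A naive use of the width fails outright: branching of an extremal particle enlarges the spread, and since \ref{A2} permits the branching rates to grow with $\abs{\yb}$, no pointwise bound on a single such rate is available. The resolution must route the branching contribution through the \emph{aggregate} charge-drift bound \eqref{assump:bdrift}, the only control that survives unboundedness; that is, $V$ must be arranged so that its total branching increment is comparable to $\ch(\yb)\big[\sum_i b_i^{\opl}(\yb)-\sum_j b_j^{\omi}(\yb)\big]\le\bar{d}\abs{\yb}$ rather than to a single extremal rate. Exploiting that the attraction \ref{A4} keeps the particles packed---so that spread-increasing extremal branchings are systematically matched by spread-neutral or spread-decreasing interior ones---one aims for $\alpha_2\Grex V\le 2\alpha_2\bar{d}\,(\cdots)$ with the same combinatorial factor as in the random-walk bound. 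Combining the two estimates yields $\Grh V\le(-\alpha_1\ul{s}+2\alpha_2\bar{d})(\cdots)$, strictly negative off a finite set exactly under the hypothesis $\alpha_1\ul{s}>2\alpha_2\bar{d}$.

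I expect the main obstacle to be precisely this branching estimate: reconciling a Lyapunov function with finite level sets (hence necessarily growing with the spread) against branching that inflates the spread and is only controlled in aggregate by \eqref{assump:bdrift}. The crux is to show that the attraction-driven packing forces the requisite cancellation between extremal and interior branching events, so that the per-particle constant $\bar{d}$, and not the individual (possibly large) rates, governs the expansion. Once Lemma \ref{lemma} delivers the clean drift inequality with the threshold $\alpha_1\ul{s}>2\alpha_2\bar{d}$, Foster's criterion gives positive recurrence of $\wtild{\Yb}$ on $\wtild{\Sc}$, and the bijection \eqref{spatderiv} transports the conclusion to $\wtild{\Xb}$ on $\wtild{\Sc}_{\hgt}$, completing the proof.
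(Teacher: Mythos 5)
Your overall architecture---a drift inequality splitting $\Grh=\alpha_1\Grf+\alpha_2\Grex$, the walk part controlled via \ref{A4} and \eqref{assump:R21a}, the branching part via the aggregate bound \eqref{assump:bdrift}, with the threshold $\alpha_1\ul{s}>2\alpha_2\bar{d}$ emerging from the combination---matches the paper's Lemma \ref{lemma} in spirit. (There the Lyapunov-type function is the inversion count $f_{\mathrm{CD}}$, and the branching contribution is \emph{exact}: each swap changes $f_{\mathrm{CD}}$ by precisely $\pm\ch$, so \eqref{assump:bdrift} enters directly, with no ``attraction-driven packing'' or cancellation between extremal and interior branchings needed.) But your plan to close via Foster's criterion with a spread/width-based $V$ has a genuine gap. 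The drift the assumptions actually deliver is negative in the \emph{particle number}: $\Grh f_{\mathrm{CD}}(\xb)\le C-c\abs{\yb}$. The region where this fails to be $\le-\varepsilon$, namely $\{\abs{\yb}\le(C+\varepsilon)/c\}$, is \emph{infinite} in $\wtild{\Sc}$: a few particles can sit arbitrarily far apart. Your finite-level-set reduction uses $\{W\le M\}$, which is indeed finite, but no uniform negative drift on its complement is available: \ref{A4} permits \emph{equality} in \eqref{assump:attraction} (e.g.\ the zero-drift diffusive models of Section \ref{sec:partmodels} with $r\equiv\ell$), so the width receives no contraction from the walk part at all, while branching strictly inflates it. Hence no $V$ growing with the spread can satisfy $\Grh V\le-\varepsilon$ off a finite set, and Foster's criterion cannot be applied as you propose.

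The paper fills exactly this hole by contradiction rather than by Foster. Lemma \ref{lemma} together with the martingale Lemma \ref{lemmagenerator} yields only the Ces\`aro bound $\frac1T\int_0^T\Prob(\abs{\Yb(t)}<N)\,\dr t\ge 1-C_0/N$ for $N>C_0$. The heart of the proof is then Proposition \ref{veryuntight} (with Lemma \ref{proplemma}): if $\wtild{\Yb}$ were \emph{not} positive recurrent, an induction on odd $N$ shows this Ces\`aro average would tend to $0$. The induction step couples a bunch of far-apart particles, via assumption \ref{A3}, to an isolated one-particle process, which by \eqref{assump:R22a} branches into at least three particles with high probability; nearby particles, by \eqref{assump:R21a}, annihilate with positive probability. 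So the process cannot spend a positive fraction of time at small particle numbers without the seen-from-the-leftmost-particle process being positive recurrent---contradicting the Markov-inequality bound above. Note in particular that \ref{A3} plays no role in the drift lemma, where your proposal assigns it; its only use is in this coupling step. Without an argument of this type handling the infinite set of ``few but spread-out'' configurations, your proof is incomplete.
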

\noindent Some interesting models that fulfill all the conditions of Theorem~\ref{tightness} will be given in Section \ref{sec:partmodels}. Now, as a consequence we also have the following:
\begin{corollary}\label{corollary:tightness}\leavevmode
\begin{enumerate}
\item There exists a unique, positive probability distribution $(\pi(\wtild{\yb}))_{\wtild{\yb}\in\wtild{\Sc}}$ on $\wtild{\Sc}$, which is the only invariant measure of the process $(\wtild{\Yb}(t))_{t\geq0}$ and
\[
\lim_{t\to+\infty}\Prob(W(t)=n)=
\textstyle\sum_{\wtild{\yb}\in\wtild{\Sc}\,:\,w(\wtild{\yb})=n}\!\pi(\wtild{\yb}),
\]
implying that the width process is stochastically compact (tight), where $n\in\Zbb^+$.
\item At equilibrium the particle number is of finite expectation, i.e.:
\[
\textstyle\sum_{\wtild{\yb}\in\wtild{\Sc}}\abs{\wtild{\yb}}\pi(\wtild{\yb})<+\infty.
\]
\item Finally, the width process grows at most linearly, that is:
\begin{equation}\label{widthestimate}
\limsup_{t\to+\infty}\frac{\Exp[W(t)]}{t}<+\infty.
\end{equation}
\end{enumerate}
\end{corollary}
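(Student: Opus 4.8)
The plan is to read off all three assertions from the ergodicity supplied by Theorem~\ref{tightness}, upgraded by the moment and drift information already accumulated in Propositions~\ref{propwelldefined}--\ref{propvariance} and in the proof of the theorem itself. For part (1) I would argue purely on the level of countable Markov chain theory. Under the standing hypotheses $\wtild\Yb$ is an irreducible, countable, continuous-time Markov process, so Theorem~\ref{tightness} (where positive recurrence is ergodicity) yields a unique stationary law $\pi$, and irreducibility forces $\pi(\wtild\yb)>0$ for every $\wtild\yb\in\wtild\Sc$. Positive recurrence further gives $\lim_{t\to\infty}\Prob(\wtild\Yb(t)=\wtild\yb)=\pi(\wtild\yb)$ for each state; since both sides are probability mass functions summing to one, this pointwise convergence upgrades (Scheffé) to total-variation convergence, hence $\Prob(f(\wtild\Yb(t))=n)\to\sum_{f(\wtild\yb)=n}\pi(\wtild\yb)$ for any $f$. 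Taking $f=w$ and using that the width is translation invariant, so that $W(t)=w(\wtild\Yb(t))$, gives the displayed limit, and tightness of $(W(t))_{t\ge0}$ is then immediate because the limiting law $\big(\sum_{w(\wtild\yb)=n}\pi(\wtild\yb)\big)_{n\ge1}$ is a genuine probability distribution on $\Zbb^+$, so no mass escapes to infinity.

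For part (2) I would invoke the Foster--Lyapunov inequality underlying the proof of Theorem~\ref{tightness}. Writing $\Gr$ for the generator acting on functions of the (centered) configuration, legitimate by the translation invariance \ref{A0}, the positive-recurrence argument through Lemma~\ref{lemma} and the standing condition $\alpha_1\ul s>2\alpha_2\bar d$ should be arranged to produce a nonnegative Lyapunov function $V$ and a drift bound $\Gr V\le -g+b\,\ind_C$ on a finite set $C\subset\wtild\Sc$, in which the negative part dominates the particle number, $g(\wtild\yb)\ge c\,\abs{\wtild\yb}$. The standard moment estimate for positive recurrent chains then gives $c\,\Exp_\pi[\abs{\wtild\yb}]\le\Exp_\pi[g]\le b<\infty$, which is exactly the claim. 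Equivalently, one may argue regeneratively: by positive recurrence the return time $\tau$ to the singleton has $\Exp[\tau]<\infty$ and $\Exp_\pi[\abs{\wtild\yb}]=\Exp\big[\int_0^\tau\abs{\wtild\Yb(s)}\,\dr s\big]/\Exp[\tau]$, reducing everything to bounding the cycle integral via the same inward drift. I expect this drift inequality, with negative part controlling $\abs{\wtild\yb}$ rather than merely being bounded below by a positive constant, to be the genuine obstacle: the naive particle-number drift is only quadratic in $\abs{\yb}$ (branching can create pairs at rate up to $\abs{\yb}B_{\abs{\yb}}$), so it is precisely the global attraction encoded in $\alpha_1\ul s>2\alpha_2\bar d$ that must be exploited, and this is what the proof of Theorem~\ref{tightness} is engineered to deliver.

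For part (3) I would combine a width-drift bound with the very same inequality. Because the random-walk rates at any occupied site sum to at most $1$ by \eqref{assump:R21b}, a (RW) step can enlarge $w$ only when one of the two extremal particles jumps outward, contributing at most $2\alpha_1$; a branching step enlarges $w$ only when the leftmost or rightmost particle branches outward, contributing at most $\alpha_2\big(b_{i_{\lrm}}^{\circ}+b_{i_{\rrm}}^{\circ}\big)\le 2\alpha_2 B_{\abs{\yb}}$. Using \eqref{assump:rateBb} to write $B_{\abs{\yb}}\le\bar D\abs{\yb}+C_0$ for a suitable constant $C_0$, this gives $(\Gr w)(\yb)\le C_1+C_2\abs{\yb}$. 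On the other hand, integrating $\Gr V\le -g+b\,\ind_C$ and using $V\ge0$, $g\ge c\abs{\yb}$ yields $c\int_0^t\Exp[\abs{\Yb(s)}]\,\dr s\le V(\Yb_0)+bt$, i.e.\ a bounded Cesàro average of the expected particle number without any need for uniform-in-time boundedness. Feeding both into Dynkin's formula (justified at each finite time by Proposition~\ref{propvariance}),
\[
\Exp[W(t)]=\Exp[W(0)]+\int_0^t\Exp\big[(\Gr w)(\Yb(s))\big]\,\dr s\le \Exp[W(0)]+C_1 t+\frac{C_2}{c}\big(V(\Yb_0)+bt\big),
\]
whence $\limsup_{t\to\infty}\Exp[W(t)]/t\le C_1+C_2 b/c<\infty$, which is \eqref{widthestimate}. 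Thus parts (2) and (3) rest on the single nontrivial input of the particle-number-controlling drift inequality, while the remaining steps are routine.
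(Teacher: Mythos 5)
Your overall architecture is sound, and for parts (1) and (2) it essentially coincides with the paper's: part (1) is exactly the ``straightforward consequence'' of Theorem~\ref{tightness} that the paper records, and the drift inequality you postulate for part (2) is precisely Lemma~\ref{lemma}, $(\Grh f_{\mathrm{CD}})(\xb)\le C-c\abs{\yb}$, with Lyapunov function $V=f_{\mathrm{CD}}$ --- already proved in the paper, not an obstacle still to be engineered. Two caveats on your part (2), though. First, your Foster--Lyapunov formulation ``$\Gr V\le -g+b\,\ind_C$ on a \emph{finite} set $C\subset\wtild{\Sc}$'' is not available here: in $\wtild{\Sc}$ the sublevel sets $\{\abs{\wtild{\yb}}\le K\}$ are infinite (a few particles can be arbitrarily spread out even after anchoring at the leftmost one), so the classical finite-$C$ statement does not apply as written. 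Fortunately it is not needed: Lemma~\ref{lemma} gives the \emph{global} bound $\wtild{\Gr}_{\hgt}f_{\mathrm{CD}}\le C-c\abs{\yb}$. Second, your appeal to ``the standard moment estimate'' glosses over the one genuinely delicate point, namely that $\Exp_\pi\big[\wtild{\Gr}_{\hgt}V\big]=0$ cannot simply be asserted for the unbounded $V=f_{\mathrm{CD}}$; the paper resolves this by truncating, $V_N=\min(f_{\mathrm{CD}},N)$, checking $\wtild{\Gr}_{\hgt}V_N\le C$, using invariance of $\pi$ for the bounded $V_N$, and passing to the limit by Fatou's lemma. Your alternative regenerative argument would also work, but the cycle bound it needs comes from the same drift inequality, so nothing is saved. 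For part (3) you take a genuinely different, and workable, route: a direct generator bound $(\Gr w)(\yb)\le C_1+C_2\abs{\yb}$ (only extremal particles can increase the width; correct, using \eqref{assump:R21b} and \eqref{assump:rateBb}), combined with the Ces\`aro bound $\int_0^t\Exp\abs{\Yb(s)}\,\dr s\le C_0\,t$ --- which is the paper's \eqref{eq:thmmain} --- fed into Dynkin's formula for $w$, which then requires its own uniform-integrability justification along the lines of Lemma~\ref{lemmagenerator}. The paper is shorter: since $f_{\mathrm{CD}}(\xb)\ge w(\yb)-2$, inequality \eqref{eq:firstintheorem} immediately yields $\Exp[W(t)]\le 2+f_{\mathrm{CD}}(\Xb_0)+Ct$, with no second generator computation or martingale argument needed. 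What your route buys is independence from the inversion count at time $t$; what the paper's buys is economy, reusing Lemma~\ref{lemmagenerator} verbatim. Either way, all three parts rest, as you correctly diagnose, on the single input $\Grh f_{\mathrm{CD}}\le C-c\abs{\yb}$.
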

Finally we notice that we have not investigated models in which the condition $\alpha_1\ul{s}>2\alpha_2\bar{d}$ is violated including the case when $\alpha_1=0$ but $\bar{d}\geq0$. Also we have not considered non-nearest neighbor walking particles but we extended the above result to processes with long range branching mechanisms, see the details in the forthcoming Subsection \ref{sec:nnnbranching}.

\subsection{Extensions}\label{sec:nnnbranching}

In this subsection we extend the results of the previous sections to double branching annihilating random walkers with non-nearest neighbor branching mechanisms. We say that the process $\left(\Yb^{(\mathrm{LR})}(t)\right)_{t\geq0}$ on $\Sc$ is a \emp{parity conserving long range branching and annihilating random walk} if its (formal) infinitesimal generator can be written in the following way:
\[
\Gr^{(\mathrm{LR})}=\Gr+\sum_{l=2}^{+\infty}\Gr^{\brl},
\]
$\Gr$ was defined in \eqref{dbarwgenerator}. $\Gr^{\brl}$ acts on a local function $f$ as
\[
\begin{aligned}
(\Gr^{\brl} f)(\yb)=\sum_{i\in\Zbb}
&\ind\{y_i=+1\}b_{i,l}^{\opl}(\yb)\bigg(\prod_{\substack{j=i-l+1\\j\neq i}}^{i+l-1}\!\!(1-\abs{y_{j}})\bigg)
\big(f\left(\yb+\delta_{i-l}-2\delta_i+\delta_{i+l}\right)-f(\yb)\big)\\
+&\ind\{y_i=-1\}b_{i,l}^{\omi}(\yb)\bigg(\prod_{\substack{j=i-l+1\\j\neq i}}^{i+l-1}\!\!(1-\abs{y_{j}})\bigg)\big(f(\yb-\delta_{i-l}+2\delta_i-\delta_{i+l})-f(\yb)\big).
\end{aligned}
\]
In particular, if $b_{\bll,l}(\bll)\equiv0$ holds above an index $L>1$ then the corresponding process is said to be a \emp{parity conserving $L$-range branching and annihilating random walk}. Notice that the branching is ``careful'' in the sense that a particle branches to distance $l$ only if there are no other particles sitting on its $l-1$ long neighborhood. The state space $\Sc$ is therefore kept by the dynamics.

As it was the case earlier we consider the height function process $(\Xb^{(\mathrm{LR})}(t))_{t\geq0}$ corresponding to $(\Yb^{(\mathrm{LR})}(t))_{t\geq0}$. Now it involves long, but finite, rearrangements of bits on the same configuration space $\Sc_{\hgt}$. Its (formal) infinitesimal generator reads as:
\[
\Gr_{\hgt}^{(\mathrm{LR})}=\Grh+\sum_{l=2}^{+\infty}\Gr_{\hgt}^{\exl},
\]
where $\Grh$ was defined in \eqref{swappingvotergenerator}. On a local function $f$ the $l$-exclusion generator $\Grh^{\exl}$ acts as:
\begin{multline*}
 (\Grh^{\exl}f)(\xb)=\\
 \begin{aligned}
  \sum_{k\in\Sbb}&b_{k-\oh,l}^{\opl}(\yb)
  \bigg(\prod_{m=k}^{k+l-1}x_m(1-x_{m-l})\bigg)
  \bigg(f\big(\xb+\sum_{m=k}^{k+l-1}(\delta_{m-l}-\delta_m)\big)-f(\xb)\bigg)\\
  +&b_{k-\oh,l}^{\omi}(\yb)
  \bigg(\prod_{m=k}^{k+l-1}(1-x_m)x_{m-l}\bigg)
  \bigg(f\big(\xb+\sum_{m=k}^{k+l-1}(\delta_m-\delta_{m-l})\big)-f(\xb)\bigg),
 \end{aligned}
\end{multline*}
where $\xb\in\Sc_{\hgt}$, $\yb\in\Sc$ are connected by \eqref{spatderiv}, and the products and sums run over the half integers. The dynamics involves simultaneous swaps of blocks of length $l$, see Figure \ref{fig:dbarw-lrbarw}.
\begin{figure}[!ht]
\centering
\includegraphics[scale=0.3]{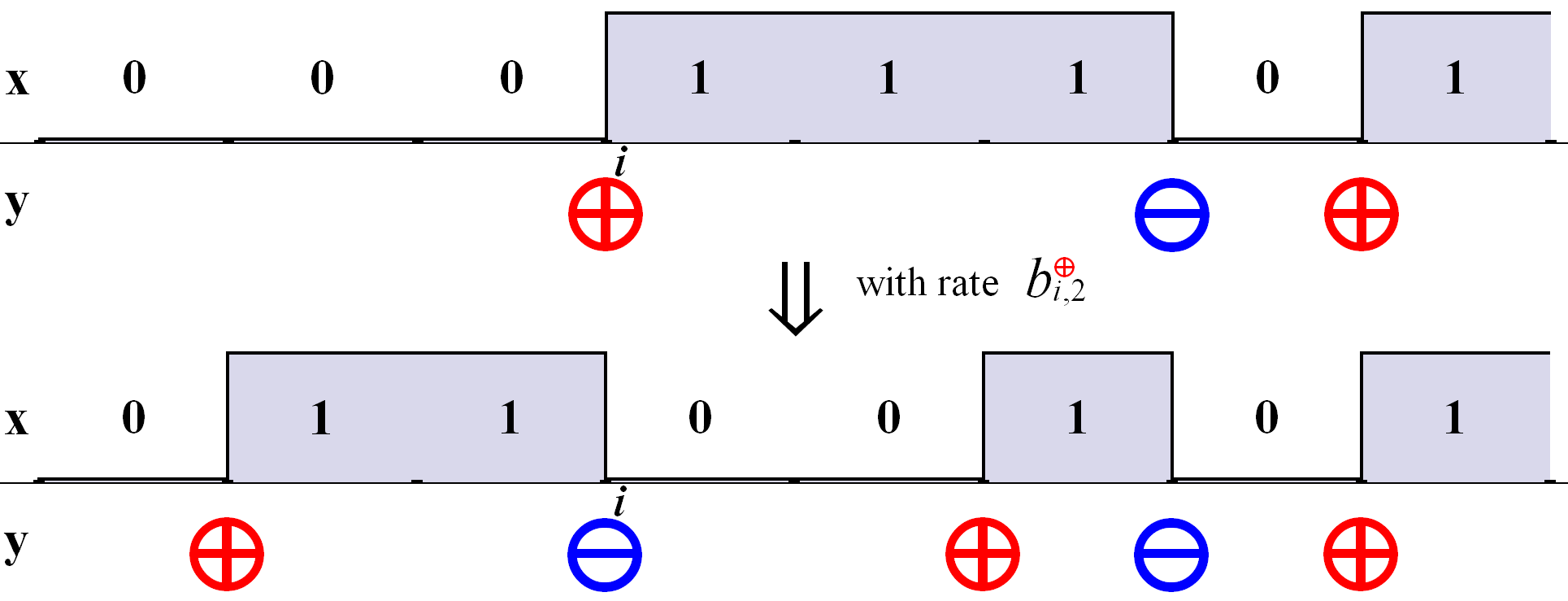}%
\caption[Long range branching and annihilating random walk]
{Long range branching and annihilating random walk branching at $i$.}\label{fig:dbarw-lrbarw}%
\end{figure}
The main result of this section is:
\begin{theorem}\label{non-nearesttightness}
Suppose all conditions of Theorem \ref{tightness}, that is assumptions \ref{A0}--\ref{A4} and that $\alpha_1\ul{s}>2\alpha_2\bar{d}$ holds, with the restriction that the function $B_{\bll}$, defined in \ref{A2}, is bounded above by a constant $\bar{B}$. We assume that for every fixed $2\leq l\in\Nbb$: $b_{i,l}:=b_{i,l}^{\opl}=b_{i,l}^{\omi}$ holds, and that $b_{\bll,l}(\bll)$ is translation invariant, that is $b_{i,l}(\yb)=b_{i+n,l}(\zb)$ whenever $z_j=y_{j-n}$ ($j,n\in\Zbb$). Furthermore we also assume that the $b_{\bll,l}(\bll)$'s fulfill assumption \ref{A3}.
The ultimate condition reads as:
{\rm
\begin{AAA}[resume]
\item\label{A5} \emp{Strength of long range branching}.\\
 There exists a function $\wtild{B}:\Nbb\to\Rbb^+$ such that for every $2\leq l\in \Nbb$:
 \[
 \sup_{i\in\Zbb,\yb\in\Sc}b_{i,l}(\yb) \leq \wtild{B}(l),\text{ and }\sum_{l=2}^{+\infty}l^2\wtild{B}(l)<+\infty
 \]
 hold.
\end{AAA}
}
\noindent Then we have the following consequences:
\begin{enumerate}
\item The process $(\Yb^{(\mathrm{LR})}(t))_{t\geq0}$ takes almost surely finitely many steps in any finite interval, and its width process has finite second moment for every fixed $t\geq0$.
\item Parity conserving long range branching and annihilating random walk as seen from its leftmost particle position is a countable state irreducible and ergodic Markov process.
\item In addition the particle number is of finite expectation at equilibrium and \eqref{widthestimate} also holds for its width process.
\end{enumerate}
\end{theorem}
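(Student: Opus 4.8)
The plan is to treat the three conclusions in turn, reusing wholesale the apparatus already built for the nearest-neighbour process and isolating the single new input coming from the long-range generators $\Gr^{\brl}$. Since by hypothesis $B_{\bll}\leq\bar B$, the bounds \eqref{assump:rateBa} and \eqref{assump:rateBb} hold automatically (indeed $B(N)\leq\bar B\,N$), so Propositions \ref{propwelldefined} and \ref{propvariance} already govern the part of the dynamics generated by $\Gr$. For conclusion (1) I would re-run those two proofs with $\Gr$ replaced by $\Gr^{(\mathrm{LR})}$. Boundedness makes the total nearest-neighbour rate out of $\yb$ linear in $\abs{\yb}$, while \ref{A5} bounds the total long-range branching rate by $\abs{\yb}\sum_{l\geq 2}\wtild{B}(l)<+\infty$, again linear; hence the particle number is dominated by a linear-rate pure-birth process and cannot explode, giving finitely many steps in every finite interval. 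For the second moment of the width I would apply $\Gr^{(\mathrm{LR})}$ to $w^2$; the genuinely new contribution beyond Proposition \ref{propvariance} comes only from range-$l$ branchings at the two boundary particles, each raising $w$ by $l$ at rate at most $\wtild{B}(l)$ and thus adding $2wl+l^2$ to the increment of $w^2$. Summing over $l$ and invoking \ref{A5} gives $\tfrac{\dr}{\dr t}\Exp[W(t)^2]\leq C_1\Exp[W(t)]+C_2$, where $\sum_l l^2\wtild{B}(l)<+\infty$ controls the $l^2$ terms; combined with the analogous linear-in-$t$ first-moment bound this yields a finite second moment for every fixed $t$.

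The core is conclusion (2). Here I would invoke the Foster--Lyapunov argument underlying Theorem \ref{tightness}, which makes a suitable flux-type functional on $\wtild{\Sc}_{\hgt}$ have strictly negative mean drift under $\Grh$ precisely when $\alpha_1\ul{s}>2\alpha_2\bar d$. The decisive structural observation is the imposed symmetry $b_{i,l}^{\opl}=b_{i,l}^{\omi}$: in the height representation each $\Grh^{\exl}$ is then a \emph{symmetric} length-$l$ exchange, performing left- and right-moving block swaps at equal rates, so it carries no net current and contributes \emph{zero} mean drift to the flux functional. Consequently neither the balance constant $\bar d$ of \eqref{assump:bdrift} nor the hypothesis $\alpha_1\ul{s}>2\alpha_2\bar d$ is altered by the long-range part, and the negative drift of Theorem \ref{tightness} survives up to a correction bounded by a constant multiple of $\sum_{l\geq 2}l\,\wtild{B}(l)<+\infty$, which is absorbed after enlarging the exceptional finite set. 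The translation invariance, the symmetry, and the decay \ref{A3} imposed on the $b_{\bll,l}$ are exactly what let the configuration-dependence of the long-range rates be handled by the same estimates that \ref{A3} handled for $\Gr$, while the second-moment width bound from conclusion (1) enters the recurrence scheme precisely where Proposition \ref{propvariance} entered it in the nearest-neighbour case. Irreducibility on the countable space $\wtild{\Sc}$ is inherited, since $\Gr^{(\mathrm{LR})}$ contains the full nearest-neighbour generator $\Gr$, whose rates are bounded below by \eqref{assump:R21a} and \eqref{assump:R22a}; positive recurrence of $\wtild{\Xb}^{(\mathrm{LR})}$ then transfers to $\wtild{\Yb}^{(\mathrm{LR})}$ through the bijection \eqref{spatderiv}, giving ergodicity.

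I expect the main obstacle to be making the ``zero mean drift'' claim of the previous paragraph rigorous for the \emph{specific} Lyapunov functional of Theorem \ref{tightness} rather than for an idealised flux. A range-$l$ branching fires only when a boundary particle is isolated by an empty gap of length at least $l-1$, so the block exchanges act on a restricted set of local patterns; one must check that on this set the left- and right-moving contributions still cancel to first order and that the residual second-order spreading is integrable against both $l$ and $l^2$ — which is exactly the content of \ref{A5} and the reason the stronger $\sum_l l^2\wtild{B}(l)<+\infty$ is assumed rather than mere summability. Once this absorption is established, conclusion (3) follows mechanically as in Corollary \ref{corollary:tightness}: ergodicity furnishes the unique stationary law, under which the finite second moment of the width from conclusion (1) forces a finite expected particle number, and the at-most-linear growth \eqref{widthestimate} of $\Exp[W(t)]$ is read off from the same linear first-moment drift bound used above.
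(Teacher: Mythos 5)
Your overall architecture matches the paper's: re-prove the Lyapunov bound of Lemma \ref{lemma} with the long-range term added, redo the width-moment and martingale estimates, and let Lemma \ref{proplemma} and Proposition \ref{veryuntight} carry over (the paper explicitly leaves those last two to the reader). However, the ``decisive structural observation'' at the core of your conclusion (2) --- that the symmetry $b_{i,l}^{\opl}=b_{i,l}^{\omi}$ makes each $\Grh^{\exl}$ a zero-mean-drift symmetric exchange --- is false, and the paper never uses it. The two terms of $\Grh^{\exl}$ are supported on \emph{disjoint} local patterns ($0^l1^l$ versus $1^l0^l$, i.e.\ an isolated $\opl$ versus an isolated $\omi$ particle), so left- and right-moving block swaps do not fire at equal rates in a given configuration: for a single $\opl$ particle at $i$ (so $\ch(\yb)=+1$) a range-$l$ branch swaps a $0^l$-block with a $1^l$-block, creating exactly $l^2$ new inversions, and one gets $(\Grh^{\exl}f_{\mathrm{CD}})(\xb)=+\,l^2\,b_{i,l}(\yb)>0$ with no compensating $\omi$ term at all. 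There is therefore no first-order cancellation to make rigorous; the sign of the long-range drift is simply uncontrolled and must be bounded in absolute value. That is exactly what the paper does --- it is your own fallback sentence, promoted to the whole argument: $(\Grh^{\exl}f_{\mathrm{CD}})(\xb)\leq l^2\sup_i b_{i,l}(\yb)\leq l^2\wtild{B}(l)$, summed over $l$ via \ref{A5} into a new additive constant $\bar{C}$, leaving the coefficient $c$ of $-\abs{\yb}$ (hence the hypothesis $\alpha_1\ul{s}>2\alpha_2\bar{d}$) untouched. Note also that your stated correction, ``a constant multiple of $\sum_{l\geq2}l\,\wtild{B}(l)$,'' carries the wrong weight: a length-$l$ block swap moves $f_{\mathrm{CD}}$ by $l^2$, so the correction is of order $\sum_l l^2\wtild{B}(l)$ --- which you yourself later identify as the reason for the $l^2$ in \ref{A5}. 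With the zero-drift rhetoric dropped and the weight corrected, your conclusion (2) argument coincides with the paper's.

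On conclusion (1) you diverge mildly in method. Where you differentiate $\Exp[W(t)^2]$ and apply a Gronwall inequality, the paper builds an explicit dominating process: a jump chain $(H_n)$ that doubles with rate at most $2D=2\sum_{l\geq2}\wtild{B}(l)$ and increases by one with the bounded rate $2\alpha_1+2\alpha_2\bar{B}$, time-changed into $\wtild{Q}^{w_0}(t)$, for which a Chernoff computation gives \emph{all} moments finite. This difference matters twice: your differential inequality presupposes the a priori finiteness of the very moment being bounded, so you would still need a localization step that the coupling makes unnecessary; and the paper needs the dominating process again to rerun the uniform-integrability bound \eqref{ineq:unifint} in the analogue of Lemma \ref{lemmagenerator}, precisely the slot you correctly identified for the second-moment input. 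Your remaining points --- linear total jump rate giving non-explosion (note $B(N)\leq\bar{B}N$ makes \eqref{assump:rateB} automatic, as you say), irreducibility inherited from the embedded nearest-neighbour generator, assumption \ref{A3} for the $b_{\bll,l}$'s feeding the Proposition \ref{veryuntight} machinery, and conclusion (3) via the argument of Corollary \ref{corollary:tightness} --- all agree with the paper.
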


\section{Particular models}\label{sec:partmodels}

This section is devoted to present particular but sufficiently general models the rates of which fulfill all the conditions in \ref{A0}--\ref{A4} or in \ref{A0}--\ref{A5}.
To handle both the case $\ch(\yb)=+1$ and when $\ch(\yb)=-1$, we introduce two sorts of indexing, mainly due to \ref{A3}. Let $i_1>i_2>\cdots>i_{\abs{\yb}}$ and $i_1<i_2<\cdots<i_{\abs{\yb}}$ denote the particle positions of $\yb$, if $\ch(\yb)=+1$ and $\ch(\yb)=-1$, respectively. In the following, we will also refer to these indices as \emp{ranks}. So we can write an arbitrary $\yb\in\Sc$ as:
\begin{align*}
 \yb=&\ind\{\ch(\yb)=+1\}\big(\delta_{i_{\abs{\yb}}}-\delta_{i_{\abs{\yb}-1}}+\delta_{i_{\abs{\yb}-2}}-
      \cdots+\delta_{i_{1}}\big)\\
    +&\ind\{\ch(\yb)=-1\}\big(-\delta_{i_1}+\delta_{i_2}-\delta_{i_3}+\cdots-\delta_{i_{\abs{\yb}}}\big),
\end{align*}
where recall the Kronecker delta function $\delta_{i_j}$ as $\delta_{i_j}(x)=1$, if $x=i_j$, otherwise it is zero.
In particular, when $\ch(\yb)=+1$, then the first particle from left to right order is indexed by $i_{\abs{\yb}}$, while the index of the last one in that order is $i_1$. This sort of reverse order indexing, indeed, facilitates the handling of condition \ref{A3}.

Notice that none of our conditions connect the behavior of branching and hopping rates (\eqref{assump:3decay} in \ref{A3} could be divided into three separate conditions as well). Hence we discuss these instances in two separate subsections and any two of the below rates for branching and random walking can be combined later to give rise to appropriate models.

Below, $\circ$ can stand for each of the charges $\opl$ and $\omi$.

\subsection{Branching rates}\label{sec:brrates}

It is clear that we have much greater freedom in choosing the branching rates than the hopping ones. All of the following rates satisfy \ref{A0}--\ref{A4} concerning branching and one can choose $\alpha_1,\alpha_2$ for $\alpha_1\ul{s}>2\alpha_2\bar{d}$ to hold as well, while the rates of the last item additionally satisfy \ref{A5} and $B_{\bll}\leq\bar{B}$ with some $\bar{B}>0$. Below we will denote by $\beta,\beta_1,\beta_2$ fixed, positive constants.
\begin{description}
\item[Finite range branching.] \emp{Any} translation invariant, positive and finite range dependent rates satisfy the corresponding assumptions for branching. The general setup can be the following:
\[
b_{i_j}^{\circ}(\yb)=\beta_1+\beta_2\!\!\sum_{i:\abs{i_j-i} \leq L_1}\!\!f\big(\abs{i_j-i},(y_{i'})_{\abs{i'-i}\leq L_2}\big)\quad(\yb\in\Sc),
\]
where $L_1,L_2\in\Nbb$ are fixed constants and $f:\Zbb\cap[0,L_1]\times\{-1,0,+1\}^{2L_2+1}\to\Rbb^+_0$ can be an arbitrary function. In particular this includes
\[
b_{i_j}^{\circ}(\yb)=\beta_1+\beta_2\ind\{y_{i_j-1}=0=y_{i_j+1}\},
\]
which gives boost on a lone particle to branch with bigger probability. Another choice that depends on the sign of the particle at position $i_j$:
\[
b_{i_j}^{\circ}(\yb)=\exp\big(y_{i_j}\textstyle\sum_{i:\abs{i_j-i} \leq L,\, i\neq i_j}f(\abs{i_j-i},y_{i})\big)\quad(\yb\in\Sc),
\]
where $L\in\Nbb$ and $f:\Zbb\cap[0,L]\times\{-1,0,+1\}\to\Rbb$ can be arbitrary.
\item[Bounded, non-finite range dependent branching.] We have great freedom in choosing bounded, translation invariant but \emp{non-finite range dependent} rates in a rather general way.
\begin{itemize}
\item The first branching rate instance of the previous point works in this case, if $L_1$ is set to be infinity along with
\[
\sum_{n=0}^{+\infty}\textstyle\max_{\xi\in\{-1,0,+1\}^{2L_2+1}} f(n,\xi)<+\infty.
\]
\item The next instance can serve as the general setup for non-finite range, distance- and configuration dependent branchings:
\[
b_{i_j}^{\circ}(\yb)=
\beta_1+\beta_2\,h\bigg(\sum_{i\in\Zbb}\psi(i_j-i)
f_{\abs{i_j-i}}\big((y_{i'})_{\abs{i'-i}\leq\abs{i_j-i}+L}\big)\bigg)\quad(\yb\in\Sc),
\]
where $h:\Rbb\to\Rbb^+$ is any $\gamma$-H\"older continuous function with positive exponent, $\psi:\Zbb\to\Rbb$ is absolutely summable, finally for an $n,L\in\Nbb$, $f_n:\{-1,0,+1\}^{2n+2L+1}\to\Rbb^+_0$ is any function for which
\[
\textstyle\sup_{n\in\Nbb,\,\xi\in\{-1,0,+1\}^{2n+2L+1}}f_n(\xi)<+\infty.
\]
Notice that, apart from the summability condition, we do not have any restriction on the order of decay of $\psi$.
\item Another model works with branching rates
\[
b_{i_j}^{\circ}(\yb)=
\beta\bigg(1+\sum_{i\in\Zbb}\psi(\abs{i_j-i})|y_{i}|\bigg)^{-\beta_1-\beta_2\lambda_{i_j}(\yb)}\quad(\yb\in\Sc),
\]
where $\lambda_{i_j}(\yb)=\sum_{i\in\Zbb} E(\abs{i_j-i})|y_i|$, $\psi,E:\Nbb\to\Rbb^+$ such that $\sum_{n=0}^{+\infty}\psi(n)<+\infty$ and $\lim_{n\to+\infty}E(n)=0$ hold.
\item Our next example involves a kind of \emp{rank} dependence:
\begin{align*}
b_{i_j}^{\circ}(\yb)=\beta + &\beta_1\sum_{j':1\leq j'\leq j}\; g_1(\abs{i_j-i_{j'}}, j')\\
+&\beta_2\sum_{j':j\leq j'\leq \abs{\yb}}\! g_2(\abs{i_j-i_{j'}},j')\qquad(\yb\in\Sc),
\end{align*}
where $g_1,g_2:\Nbb^2\to\Rbb$ are assumed to be absolutely summable sequences, that is $\sum_{n,m=0}^{+\infty}\big(\abs{g_1(n,m)}+\abs{g_2(n,m)}\big)<+\infty$, and the constant $\beta$ makes these rates strictly positive. In particular, when $g_1(n,m)=g_2(n,m)=m^{-2}\ind\{n=0,m>0\}$, then in terms of the height function $\Xb$, lone unit heights will repel each other.
\end{itemize}
\item[Unbounded, non-finite range branching.] Branching rates can even be chosen in a rather general way to be \emp{unbounded}. One only has to make sure that \eqref{assump:bdrift}, \eqref{assump:rateB} [see \ref{A2}] and \ref{A3} are all satisfied.
\begin{itemize}
\item Our first choice is the following:
\[
b_{i_j}^{\circ}(\yb) = \sum_{i\in\Zbb}\psi(\abs{i_j-i})|y_i|
f\big((y_{i'})_{\abs{i'-i}\leq L}\big)\quad(\yb\in\Sc),
\]
where $\psi:\Nbb\to\Rbb^+$ is monotone non-increasing such that
$\sum_{n:n\leq N}\psi(n) \leq \log(N)$ holds if $N$ is large enough, while $L\in\Nbb$ and $f:\{-1,0,+1\}^{2L+1}\to\Rbb^+$ can be any function. In particular
\[
\psi(n)=\frac{1}{n\log(n)\log(\log(n))\,\cdots\,\log^{(a)}(n)}\quad (\Zbb\ni n>\exp^{(a)}(0))
\]
serve as good choices for every $a\in\Nbb$, where $\log^{(a)}$ ($\exp^{(a)}$) denotes the $a^{\mathrm{th}}$ iterate of $\log$ ($\exp$).
\item Branching rates can be governed by a one-sided potential as well:
\[
b_{i_j}^{\circ}(\yb)=h\bigg(\sum_{1\leq j' \leq j}\varphi(\abs{i_j-i_{j'}})\bigg)\quad(\yb\in\Sc),
\]
where $h:[1,+\infty)\to\Rbb^+$ can be any monotone function with at most logarithmic growth, while $\varphi:\Nbb\to\Rbb^+$ is a monotone non-decreasing function such that $\varphi(0)=1$.
\item The next example is
\[
b_{i_j}^{\circ}(\yb)=
\beta\bigg(\sum_{i\in\Zbb}\psi(i_j-i)|y_{i}|\bigg)^{-y_{i_j}\lambda_{i_j}(\yb)}\quad(\yb\in\Sc),
\]
where $\psi(0)=1$, $\lambda_{i_j}(\yb)=\ch(\yb)\sum_{i\in\Zbb} E(i_j-i)|y_i|$, and $\psi,E:\Zbb\to\Rbb^+$.
Furthermore $\lim_{\abs{n}\to+\infty}\psi(n)=\lim_{\abs{n}\to+\infty}E(n)=0$, and
\[
\textstyle\sum_{n:\abs{n}\leq N}E(n)\leq\displaystyle\frac{\log(\log(N))}{\log\big(\sum_{n:\abs{n}\leq N}\psi(n)\big)}
\]
holds for large $N$'s. This instance allows for particles with charge $-\ch(\yb)$ to branch with unbounded rates.
\item We also have rank dependent examples:
\[
b_{i_j}^{\circ}(\yb)=\beta + \beta_1\, g_1(j) + \beta_2\, g_2(j)\sum_{j':1\leq j'\leq j}\varphi(\abs{i_{j}-i_{j'}})\quad(\yb\in\Sc),
\]
where $g_1:\Zbb^+\to\Rbb^+$ can be any function with uniformly bounded increments but at most logarithmic growth, $g_2:\Zbb^+\to\Rbb^+$, $\varphi:\Nbb\to\Rbb^+_0$ are monotone non-increasing functions such that $\sum_{n:n\leq N}\varphi(n)\leq\frac{\log(N)}{g_2(N)}$ holds, if $N$ is large enough.

In particular, when $g_1=\log$ and $\beta_2=0$, then most of the particles will branch roughly with rate $\log(\abs{\yb})$.
\end{itemize}
\item[Long range branching.] Finally we outline some instances for long range branching rates, where the only restrictions, compared to the above, are boundedness and a decay coming from \ref{A5}.
    Let $f:\Zbb\times\{-1,0,+1\}\to\Rbb^+$ be a bounded function, then a branching rate instance can be the following:
\[
b_{i_j,l}(\yb)=\beta_1\exp(-l)\!\sum_{i:\abs{i_j-i} \leq L}\!f\big(i_j-i,y_i\big)
+\beta_2\sum_{i\in\Zbb}\psi_l(i_j-i)|y_{i}|\quad(\yb\in\Sc),
\]
where $L\in\Zbb^+$, and $C, \varepsilon\in\Rbb^+$ for which $\sum_{n\in\Zbb}\psi_l(n) < C\, l^{-3-\varepsilon}$ holds for every $l\geq1$.
\end{description}

\subsection{Random walk rates}

The bottleneck of choosing the hopping rates is unequivocally assumption \ref{A4}. In general, those choices for $p$ and $q$ (see \eqref{ratepq}) will satisfy the hopping conditions of \ref{A0}, \ref{A1}, \ref{A3} and \ref{A4a} for which the (drift) function defined as
\begin{equation*}
c_k(\xb) := p_k(\xb)-q_k(\xb)\qquad(k\in\Sbb)
\end{equation*}
is monotone decreasing in $k$ for every fixed $\xb\in\Sc_{\hgt}$, in addition
\begin{align*}
0<&\inf_{k\in\Sbb,\xb\in\Sc_{\hgt}:\,x_{k-1}\neq x_{k}}\min\{q_k(\xb),c_k(\xb)+q_k(\xb)\},\text{ and}\\
&\sup_{k\in\Sbb,\xb\in\Sc_{\hgt}:\,x_{k-1}\neq x_{k}}\max\{q_k(\xb),c_k(\xb)+q_k(\xb)\}<+\infty
\end{align*}
are satisfied.

We will denote by $L_j$ the $j^{\mathrm{th}}$ inter-particle distance, that is $|i_{j+1}-i_j|$,
where $0\leq j\leq \abs{\yb}$, and formally add $L_0=L_{\abs{\yb}}:=+\infty$. In the following examples one can choose $\alpha_1,\alpha_2$ for $\alpha_1\ul{s}>2\alpha_2\bar{d}$ to hold, naturally.

\begin{description}
\item[{\bf Diffusive models.}]
\leavevmode
\begin{itemize}
\item Zero drift case.
\[
r_{i_j}^{\circ}(\yb)=\ell_{i_j}^{\circ}(\yb)\quad(\yb\in\Sc),
\]
where $r^{\circ}$ can be \emph{any} translation invariant, finite range dependent rate. Also it can be long range dependent one as well, satisfying assumption \ref{A3}. For instance:
\[
r_{i_j}^{\circ}(\yb)=\alpha\sum_{i\in\Zbb}|y_i|\abs{i_j-i+1}^{-\gamma} = \ell_{i_j}^{\circ}(\yb)\quad (\yb\in\Sc),
\]
where $0<\alpha$ and $1<\gamma$.
\item Constant drifting models. The general setup reads as:
\begin{equation*}
r_{i_j}^{\circ}(\yb)=\frac{f_{i_j}(\yb)}{Z_{i_j}(\yb)};\quad
\ell_{i_j}^{\circ}(\yb)=\frac{g_{i_j}(\yb)}{Z_{i_j}(\yb)}\quad(\yb\in\Sc),
\end{equation*}
where $Z_{i_j}(\yb)=f_{i_j}(\yb)-g_{i_j}(\yb)$, such that $\sup_{i\in\Zbb,\yb\in\Sc}g_{i}(\yb)<\inf_{i\in\Zbb,\yb\in\Sc}f_{i}(\yb)$. Now, $f,g:\Zbb\times\Sc\to\Rbb^+$ can be any bounded and finite range dependent functions. Also these can be non-finite range dependent ones, in particular
\[
f_{i_j}^{\circ}(\yb)=\beta + \sum_{i:i > i_j}|y_i|\abs{i_j-i}^{-\gamma},
\text{ and }\;g_{i_j}^{\circ}(\yb)=\sum_{i:i < i_j}|y_i|\abs{i_j-i}^{-\gamma},
\]
where $\gamma>1$ and $\beta$ makes $f$ be above $g$.
\end{itemize}
\item[{\bf Rank dependent models.}] Recall that the indexing \(i_j\) increases from right to left for \(\ch(\yb)=+1\) configurations.
\begin{itemize}
\item First,
\[
r_{i_j}^{\circ}(\yb)=g(j)h(\abs{i_1-i_j})=1-\ell_{i_j}^{\circ}(\yb)\quad (\yb\in\Sc),
\]
where both $g,h:\Zbb^+\to(\varepsilon,1-\varepsilon)$ are monotone non-decreasing (non-increasing) functions and $0<\varepsilon<1$, if $\ch(\yb)=+1$ ($\ch(\yb)=-1$).

In particular in case of $\ch(\yb)=-1$ and $g(n)=h(n)=n^{-1}$ ($n\in\Zbb^+$), particles will hop to the right with rate $\approx w(\yb)^{-1}$ at low density, while with rate $\approx\abs{\yb}^{-2}$ at high density.
\item Our second example is the following:
\begin{align*}
r_{i_j}^{\circ}(\yb) &= \alpha - \ch(\yb)\sum_{j':j<j'\leq \abs{\yb}}\frac{1}{(j')^{\gamma}}
\sum_{\Zbb\ni i\neq i_j}\psi(\abs{i_{j'}-i})|y_{i}|; \\
\ell_{i_j}^{\circ}(\yb) &= \alpha\pm\ch(\yb)\sum_{j':1\leq j'<j}\;\;\frac{1}{(j')^{\gamma}}
\sum_{\Zbb\ni i\neq i_j}\psi(\abs{i_{j'}-i})|y_{i}| \quad(\yb\in\Sc),
\end{align*}
where $\gamma>1$, $\psi:\Nbb\to\Rbb^+$ is such that $\sum_{n\in\Nbb}\psi(n)<+\infty$ and $\alpha>0$ makes the rates positive. The empty sum is, as usual, defined to be zero.
\end{itemize}
\item[{\bf Distance dependent models.}]
\leavevmode
\begin{itemize}
\item The general setup can be enclosed in the following example:
\begin{align*}
r_{i_j}^{\circ}(\yb) = h\bigg(\sum_{1\leq j'\leq j} \varphi(\abs{i_j-i_{j'}})\bigg)=1-\ell_{i_j}^{\circ}(\yb) \quad(\yb\in\Sc),
\end{align*}
where $\varphi:\Nbb\to\Rbb$ is monotone non-decreasing function and if $\ch(\yb)=+1$, then  $h:\Rbb\to(\varepsilon,1-\varepsilon)$ is also monotone non-decreasing, otherwise (if $\ch(\yb)=-1$) it is a monotone non-increasing function with $0<\varepsilon<1$. In this model a strong potential ($\varphi$) forces (distant) particles to move towards each other in such a way that the strength of the interaction grows in distance.
\item Replacing $\varphi$, in the previous example, with the inter-particle distances we get the following:
\begin{align*}
r_{i_j}^{\circ}(\yb) = h\bigg(\sum_{1\leq j'< j} g(L_{j'})\bigg)=1-\ell_{i_j}^{\circ}(\yb) \quad(\yb\in\Sc),
\end{align*}
where this time $g:\Zbb^+\to\Rbb^+$ can be any function (such that $g(+\infty):=0$). This instance resembles the above rank dependent examples.
\item The following example weakens the interaction between (distant) particles:
\begin{align*}
r_{i_j}^{\circ}(\yb) &= \alpha + \ch(\yb)\sum_{j':1< j'\leq j}\psi(\abs{i_{j'}-i_1});\\
\ell_{i_j}^{\circ}(\yb) &= \alpha - \ch(\yb)\sum_{j':1< j'\leq j}\psi(\abs{i_{j'}-i_1})\quad(\yb\in\Sc),
\end{align*}
where $\psi:\Nbb\to\Rbb^+$ is such that $\sum_{n\in\Nbb}\psi(n)<+\infty$ and $\alpha>0$ makes these rates positive. In this case a potential is placed on the first particle, according to ranks, with which it tries to pull the other particles towards itself.
\item Another example works with the inter-particle distances (and ranks):
\begin{align*}
r_{i_j}^{\opl}(\yb)&=\oh+\ind\{\ch(\yb)=+1\}g(L_{j-1},j-1)-\ind\{\ch(\yb)=-1\}g(L_{j},j)\\
&=1-\ell_{i_j}^{\opl}(\yb);\\
r_{i_j}^{\omi}(\yb)&=\oh+
\ind\{\ch(\yb)=+1\}g(L_{j},j)-\ind\{\ch(\yb)=-1\}g(L_{j-1},j-1)\\
&=1-\ell_{i_j}^{\omi}(\yb),
\end{align*}
where $\yb\in\Sc$, $g:\Zbb^+\times\Zbb^+\to(\varepsilon,\oh-\varepsilon)$ is any function and $0<\varepsilon<\oh$. We set $g(+\infty,\bll)$ to be $0$. Notice that, if $g(L,n)$ does not depend on $L\in\Zbb^+$ and is monotone decreasing (increasing) in $n\in\Zbb^+$, when $\ch(\yb)=+1$ ($\ch(\yb)=-1$), then the interaction becomes repulsive between consecutive $\omi$ and $\opl$ particles.
\item Our last two examples involve long-range attraction between consecutive particles of charges $\opl$ and $\omi$. First,
\begin{align*}
r_{i_j}^{\opl}(\yb)&=\oh+\textstyle\sum_{j + (1-\ch(\yb))/2\, < j'\leq \abs{\yb}}\psi(\abs{i_{j}-i_{j'}})=1-\ell_{i_j}^{\opl}(\yb);\\
r_{i_j}^{\omi}(\yb)&=\oh+\textstyle\sum_{j + (1+\ch(\yb))/2\, < j'\leq \abs{\yb}}\psi(\abs{i_{j}-i_{j'}})=1-\ell_{i_j}^{\omi}(\yb),
\end{align*}
such that we fix $r_{i_{\mathrm{right}}}^{\circ}(\yb)=r_{i_{\mathrm{left}}}^{\circ}(\yb)=\oh=
\ell_{i_{\mathrm{right}}}^{\circ}(\yb)=\ell_{i_{\mathrm{left}}}^{\circ}(\yb)$, where $\yb\in\Sc$, $\psi:\Nbb\to\Rbb^+$ is monotone decreasing and $\sum_{n\in\Nbb}\psi(n)<\oh$.
\item Finally,
\begin{align*}
r_{i_j}^{\opl}(\yb)=\oh + & \ind\{\ch(\yb)=+1\}\sum_{\Zbb\ni i\neq i_j}\psi\big(\big|(i_{j-1}+i_{j})/2-i\big|\big)|y_i|\\
-&\ind\{\ch(\yb)=-1\}\sum_{\Zbb\ni i\neq i_j}\psi\big(\big|(i_{j}+i_{j+1})/2-i\big|\big)|y_i|=1-\ell_{i_j}^{\opl}(\yb);\\
r_{i_j}^{\omi}(\yb)=\oh + & \ind\{\ch(\yb)=+1\}\sum_{\Zbb\ni i\neq i_j}\psi\big(\big|(i_{j}+i_{j+1})/2-i\big|\big)|y_i|\\
-&\ind\{\ch(\yb)=-1\}\sum_{\Zbb\ni i\neq i_j}\psi\big(\big|(i_{j-1}+i_{j})/2-i\big|\big)|y_i|=1-\ell_{i_j}^{\omi}(\yb),
\end{align*}
where $\yb\in\Sc$, $i_{0}:=-\infty$, while $i_{\abs{\yb}+1}:=+\infty$ and $\psi:\Zbb^+_0\cup\,\Sbb^+\to\Rbb^+_0$ is such that $\sum_{n\in\Zbb^+\cup\;\Sbb^+}\psi(n)<\oh$.
\end{itemize}
\end{description}
We notice that the last three examples satisfy part \ref{A4b} of assumption \ref{A4}. Also notice that the last two instances above almost correspond to a natural attraction potential. Unfortunately the twists we used there are needed for our proofs to work.

\section{Proofs}\label{sec:proof}

The first two proofs correspond to the assertions of Subsection \ref{sec:construction}.
\begin{proof}[Proof of Proposition \ref{propwelldefined}.]
First define the total rate function corresponding to a configuration $\yb\in\Sc$ by
\[
 R(\yb)=\textstyle\sum_{i\in\Zbb}\abs{y_i}\cdot\left\{\alpha_1(r_i^{\opl}(\yb)+\ell_i^{\opl}(\yb)+r_i^{\omi}(\yb)+
\ell_i^{\omi}(\yb))+\alpha_2(b_i^{\opl}(\yb)+b_i^{\omi}(\yb))\right\},
\]
which is initially finite by definition. Let $N(t)$ be the number of steps up to time $t$ that were taken by the process $(\Yb(t))_{t\geq0}$ of double branching annihilating random walkers. We will prove that $N(t)<+\infty$ a.s.\ for every $t>0$. First we define $\big(V_n\big)_{n\in\Nbb}$ to be a sequence of independent exponentially distributed random variables with mean
\begin{equation*}
\wtild{m}_n:=\Exp(V_n)=\frac{1}{\alpha_1(\abs{\Yb_0}+2n)+\alpha_2B(\abs{\Yb_0}+2n)},
\end{equation*}
where $\Yb_0\in\Sc$ is an arbitrary initial configuration, $n\geq1$ and $V_0=0$. Now, let the process $\wtild{N}$ be defined as
\begin{equation*}
\wtild{N}(t)=\abs{\Yb_0}+\min\{n>0:S_n\geq t\}\quad(t>0),\text{ where }S_n=V_0+V_1+V_2+\cdots+V_n.
\end{equation*}
Due to $R(\yb)\leq \max_{1\leq n\leq N}(\alpha_1 n + \alpha_2 nB_n)\leq \alpha_1N+\alpha_2 B(N)$, if $\abs{\yb}\leq N$ by an obvious coupling it follows that $N(t)\leq\wtild{N}(t)$ holds, that is
\[
\wtild{\tau}_n:=\inf\{t>0:\wtild{N}(t)=n\}\leq \inf\{t>0:N(t)=n\}=:\tau_n\quad (n\in\Nbb),
\]
where assumptions \eqref{assump:R21b} [\ref{A1}] and \eqref{assump:R22b} [\ref{A2}] are involved.\\
Hence it is enough to prove that $\lim_{n\to+\infty}\wtild{\tau}_{n}=+\infty$ which is equivalent to claiming that
\[
S_\infty:=\lim_{n\to+\infty}S_n=+\infty\mbox{\; a.s.\ }
\]
Notice that the almost sure limit exists by monotonicity and that $\sum_{j=1}^{+\infty}\wtild{m}_j^2<+\infty$, which latter fact implies that the process $(M_n:=S_n-\sum_{j=1}^n \wtild{m}_j)_{n\in\Nbb}$ has an almost sure finite limit $M_{\infty}$ as $n\to+\infty$ by Doob's martingale convergence theorem. Now, assume for contradiction that $S_\infty<+\infty$ holds with positive probability, then it would follow that $M_{\infty}=-\infty$ with positive probability leading to a contradiction, since by assumption \eqref{assump:rateBa}: $\sum_{j=1}^{+\infty} \wtild{m}_j=+\infty$. We finished the proof.
\end{proof}
\begin{definition}[maximum width process]\label{defmaxwidthprocess}
Let $\left(U_n\right)_{n\in\Nbb}$ be a sequence of independent exponentially distributed random variables with mean
\begin{equation}
\Exp U_n=\frac{1}{K(w_0+n)}\quad (n\in\Zbb^+),\label{Qszelbecslese}
\end{equation}
where the constant $K\in\mathbb{N}$ will be chosen appropriately later. Under assumption \eqref{assump:rateBb} [from \ref{A2}] one can choose an $N_0$ large enough for
\[
\max_{1-w_0\leq i\leq n}B_{w_0+i} \leq \bar{D}(w_0+n),
\]
to hold whenever $n>N_0\geq1$. We choose $K$ to be
\[
K>\max\big\{2\alpha_1+2\alpha_2\max_{1\leq i\leq N_0}(B_i),\,2\alpha_1+2\alpha_2 \bar{D}\big\}.
\]
Finally let
\[
Q^{w_0}(t)=w_0+\min\{n>0:U_1+U_2+\cdots+U_n\geq t\}\quad(t>0),
\]
where $w_0=W(0)=w(\Yb_0)\in\Zbb^+$.
\end{definition}
We will use these notations and estimates in the upcoming proof.
\begin{proof}[Proof of Proposition \ref{propvariance}.]
Notice that in Definition \ref{defmaxwidthprocess} assumptions \eqref{assump:R22b} and \eqref{assump:rateBb} from \ref{A2} were implicitly used. Now under assumption \eqref{assump:R21b} it follows that one can set up a coupling such that $W(t)\leq Q^{w_0}(t)$ holds whenever $W(0)=w_0$ and $t\geq0$. It implies that the $r^\mathrm{th}$ moment of $\abs{\Yb(t)}$ is at most
\begin{align}
\Exp\abs{\Yb(t)}^r
&\leq\Exp W(t)^r \leq \Exp Q^{w_0}(t)^{\uip{r}} \leq \sum_{n=1}^{+\infty}\Prob(Q^{w_0}(t)\geq \lfloor n^{1/\uip{r}}\rfloor)\nonumber\\
&\leq (w_0+1)^{\uip{r}}+\sum_{n=(w_0+1)^{\uip{r}}}^{+\infty}\!\!\Prob(U_{w_0+1}+\cdots+U_{\lfloor n^{1/\uip{r}}\rfloor}<t),
\label{propvarbecsles}
\end{align}
where $r>0$ and $\lceil x\rceil$ ($\lfloor x\rfloor$) denotes the upper (lower) integer part of $x\in\Rbb^+$. For simplicity let $r_n:=\lfloor n^{1/\uip{r}}\rfloor$. We will estimate the sum in the last display \eqref{propvarbecsles} using Markov's inequality. For a fixed $\lambda>0$:
\begin{align}
\Prob(U_{w_0+1}+\cdots+U_{r_n}<t)
&\leq\exp(\lambda t)\prod_{j=w_0+1}^{r_n}\Exp(\exp(-\lambda U_j))\nonumber\\
&=\exp\bigg(\lambda t+\sum_{j=w_0+1}^{r_n}\log\bigg(1-\frac{\lambda}{m_j^{-1}+\lambda}\bigg)\bigg)\nonumber\\
&\leq\exp\bigg(\lambda t-\lambda\sum_{j=w_0+1}^{r_n}\frac{1}{2 K j+\lambda}\bigg)\nonumber\\
&\leq\exp\big(\lambda t+\lambda\log(2 K w_0+\lambda)\big)\frac{1}{\big(2 K r_n+\lambda\big)^{\lambda/(2 K)}},\label{prop2:turbomarkov}
\end{align}
where $m_j=\Exp(U_j)$ which is defined in \eqref{Qszelbecslese}, hence $m_j^{-1} < 2 K j$ if $j>w_0$. Finally choosing $\lambda$ to be strictly greater than $2K\uip{r}$ we find that \eqref{prop2:turbomarkov} is summable in $n$, hence the proof is complete.
\end{proof}

\bigskip

Next we define the number of the so-called inversions, or in other words ``wrongly ordered pairs''. This function is denoted by $f_{\mathrm{CD}}$ and is defined for a configuration $\xb\in\Sc_{\hgt}$ as
\[
f_{\mathrm{CD}}(\xb)=\sum_{k\in\Sbb}\sum_{l:l>k}\ind
\left\{x_k=\text{{\small$\oh$}}(1+\ch(\yb)),\,x_l=\text{{\small$\oh$}}(1-\ch(\yb))\right\}.
\]
This function was first used in \cite{CD95}, subsequently in \cite{BFMP01} and in \cite{SS08T}. In all these papers slightly different approaches were followed to prove positive recurrence in slightly different models. Our strategy will mainly follow the arguments of the last paper. As it was commented in \cite{BFMP01} $f_{\mathrm{CD}}$ measures the \emp{distance} from $\xb\in\wtild{\Sc}_{\hgt}$ to the Heaviside step configuration $(\ind\{\ch(\yb)k>0\})_{k\in\Sbb}$, since it counts exactly how many nearest neighbor transpositions are needed to reach the Heaviside from $\xb$.
\begin{lemma}\label{lemma}
Assume that \eqref{assump:R21b} [from \ref{A1}], \eqref{assump:bdrift}, \eqref{assump:R22b}, \eqref{assump:rateBb} [from \ref{A2}] and \ref{A4} hold. Then there exist $c,C\in\Rbb^+$ such that
\begin{equation}\label{lemmaestimate}
(\Grh f_{\mathrm{CD}})(\xb)\leq C - c \abs{\yb}
\end{equation}
holds for every $\xb\in\Sc_{\hgt}$ providing that $\alpha_1\ul{s}>2\alpha_2\bar{d}$. Recall that $\abs{\yb}$ is the particle number in configuration $\yb\in\Sc$, that is the number of phase boundaries $\sum_{l\in\Sbb}\ind\{x_l\neq x_{l+1}\}$ of $\xb$.
\end{lemma}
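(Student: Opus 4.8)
The plan is to read \eqref{lemmaestimate} as a Foster--Lyapunov drift inequality for $f_{\mathrm{CD}}$ and to estimate the two pieces $\alpha_1\Grf f_{\mathrm{CD}}$ and $\alpha_2\Grex f_{\mathrm{CD}}$ of $\Grh f_{\mathrm{CD}}$ separately. I treat the case $\ch(\yb)=+1$ (the case $\ch(\yb)=-1$ being symmetric), so that $f_{\mathrm{CD}}(\xb)$ counts the pairs $k<l$ with $x_k=1$, $x_l=0$, i.e.\ the $1$'s sitting to the left of $0$'s. The branching term is dispatched first. A single (Excl) swap across a bond only affects the single pair of heights it exchanges, so each $\opl$-branching raises $f_{\mathrm{CD}}$ by exactly $1$ and each $\omi$-branching lowers it by $1$. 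Summing over bonds yields $\Grex f_{\mathrm{CD}}(\xb)=\ch(\yb)\big[\sum_i b_i^{\opl}(\yb)-\sum_j b_j^{\omi}(\yb)\big]$, which by \eqref{assump:bdrift} is at most $\bar d\abs{\yb}$. Thus $\alpha_2\Grex f_{\mathrm{CD}}\le\alpha_2\bar d\abs{\yb}$ is the sole positive contribution, and the whole point is to beat it with the random-walk part.

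The core is $\alpha_1\Grf f_{\mathrm{CD}}$. Each admissible (Flip) move flips one height $x_m$, and a direct count shows it changes $f_{\mathrm{CD}}$ by $\pm D_m$, where $D_m=\#\{l>m:x_l=0\}-\#\{j<m:x_j=1\}$, with sign $+$ for a $0\to1$ flip and $-$ for a $1\to0$ flip. Organising these flips by the particle that performs them (equivalently, writing $f_{\mathrm{CD}}$ through the inter-particle gaps and differentiating), I would reduce $\Grf f_{\mathrm{CD}}$ to a sum over the particles $j_1<\dots<j_{\abs{\yb}}$ of their net drifts $c_{j_a}=r_{j_a}-\ell_{j_a}$ weighted by gap-type factors, plus explicit nonpositive diagonal terms $-(r^{\circ}_{j_a}+\ell^{\circ}_{j_a})$ coming from the second-order (annihilation) effect at each particle whose two neighbouring gaps form an inversion pair. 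Collecting coefficients, the weight multiplying each gap becomes an alternating partial sum of the $c_{j_a}$'s; the monotonicity $c_{j_1}\ge c_{j_2}\ge\dots$ furnished by the attraction assumption \ref{A4a} (or \ref{A4b}) makes every such coefficient $\le 0$, so the gap-weighted part of $\Grf f_{\mathrm{CD}}$ is $\le 0$.

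The hard part is to turn the remaining, manifestly negative, contributions into a bound of the shape $C_0-\tfrac{\ul s}{2}\abs{\yb}$, i.e.\ to show that on average each particle carries away a definite amount of $f_{\mathrm{CD}}$. Here I would use the lower bound \eqref{assump:R21a}: every occupied site contributes at least $\ul s$ to $\min\{r^{\opl},\ell^{\opl}\}+\min\{r^{\omi},\ell^{\omi}\}$, and after pairing each particle with its neighbour and invoking the extremal normalisation \eqref{eq:supplA4} (in the \ref{A4b} case) to control the leftmost and rightmost particles, these minimal mobilities should accumulate to $\tfrac{\ul s}{2}\abs{\yb}$ up to a bounded surplus $C_0$ absorbed into the constant. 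Combining the two pieces would give $\Grh f_{\mathrm{CD}}\le\alpha_1 C_0-\big(\tfrac{\alpha_1\ul s}{2}-\alpha_2\bar d\big)\abs{\yb}$, and the standing hypothesis $\alpha_1\ul s>2\alpha_2\bar d$ makes the bracket positive, yielding \eqref{lemmaestimate} with $C=\alpha_1 C_0$ and $c=\tfrac{\alpha_1\ul s}{2}-\alpha_2\bar d$. I expect the genuine difficulty to lie entirely in this last extraction: bookkeeping the $\pm D_m$ changes through annihilations, verifying that the attraction inequality is applied only in the adjacent, non-vanishing pairs where it is assumed, and producing the constant $\ul s/2$ uniformly in the configuration and for both signs of $\ch(\yb)$, where the roles of the two charges are interchanged.
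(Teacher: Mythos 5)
Your proposal takes essentially the same route as the paper's proof: the exact identity $(\Grex f_{\mathrm{CD}})(\xb)=\ch(\yb)\big[\sum_i b_i^{\opl}(\yb)-\sum_j b_j^{\omi}(\yb)\big]\leq\bar{d}\abs{\yb}$ for the branching part, the flip-change formula (your $D_m$ is the paper's $S(k)$ up to sign convention), the regrouping of the spin-flip contribution into alternating partial sums of the drifts $p-q$ over consecutive particles made nonpositive by \ref{A4} (with \eqref{eq:supplA4} controlling the extreme particles in case \ref{A4b}), and the diagonal term extracted via $\ul{s}$ from \eqref{assump:R21a}, arriving at the same constants $C=\alpha_1\ul{s}$ and $c=\oh\alpha_1\ul{s}-\alpha_2\bar{d}$. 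The only difference is presentational: you organize the bookkeeping by particles and inter-particle gaps, whereas the paper swaps the order of summation in the four double sums $I_1,\dots,I_4$ and regroups, which is the same computation.
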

\begin{remark}
This lemma is of essential importance on the way of proving Theorem \ref{tightness}. In plain words it tells us that the number of inversions decays above a certain number of double branching annihilating random walkers (i.e.\ the right hand-side of \eqref{lemmaestimate} becomes negative).
\end{remark}
\begin{proof}[Proof of Lemma \ref{lemma}.]
We prove \eqref{lemmaestimate} by calculating the act of generator $\Grh$, given in \eqref{swappingvotergenerator}, on $f_{\mathrm{CD}}$ in two steps according to its different parts.\\
\noindent {\bf I (spin-flip part contribution).}
From now on, we fix a configuration $\xb\in\Sc_{\hgt}$, the interface of which is denoted by $\yb$. We introduce the notation $\kappa(\yb):=(1+\ch(\yb))/2\in\{0,1\}$ to compress the proof and we shorten $\ch(\yb)$ and $\kappa(\yb)$ to $\ch$ and $\kappa$, respectively.
First, notice that
\begin{equation*}
f_{\mathrm{CD}}\big(\xb+(1-2x_k)\delta_k\big)-f_{\mathrm{CD}}(\xb)
=\big(\ind\{x_k=\kappa\}-\ind\{x_{k}=1-\kappa\}\big)S(k),
\end{equation*}
where
\begin{equation}\label{eq:lemmaS}
S(k)=\sum_{l:l<k}\ind\{x_l=\kappa\}-\sum_{l:l>k}\ind\{x_l=1-\kappa\}.
\end{equation}
It is clear that both sums in the last display are finite.
Using \eqref{ratepq} and putting the above connection into \eqref{voterpartgenerator} we arrive to
\begin{align}
\big(\Grf f_{\mathrm{CD}}\big)(\xb)
=&\sum_{k\in\Sbb}
\left(p_k(\xb)+q_{k+1}(\xb)\right)\big(f_{\mathrm{CD}}\big(\xb+(1-2x_k)\delta_k\big)-f_{\mathrm{CD}}(\xb)\big)\nonumber\\
=&\sum_{k\in\Sbb}\big(\ind\{x_k=\kappa\}-\ind\{x_{k}=1-\kappa\}\big)(p_{k}(\xb)+q_{k+1}(\xb))S(k),\label{eq:votergeneratoreffect}
\end{align}
that is
\begin{align*}
\big(\Grf f_{\mathrm{CD}}\big)(\xb)
=&\sum_{k\in\Sbb}\sum_{l:l<k}\ind\{x_l=\kappa\}p_{k}(\xb)(\ind\{x_k=\kappa\}-\ind\{x_k=1-\kappa\})\\
+&\sum_{k\in\Sbb}\sum_{l:l>k}\ind\{x_l=1-\kappa\}p_{k}(\xb)(\ind\{x_k=1-\kappa\}-\ind\{x_k=\kappa\})\\
+&\sum_{k\in\Sbb}\sum_{l:l<k}\ind\{x_l=\kappa\}q_{k+1}(\xb)(\ind\{x_k=\kappa\}-\ind\{x_k=1-\kappa\})\\
+&\sum_{k\in\Sbb}\sum_{l:l>k}\ind\{x_l=1-\kappa\}q_{k+1}(\xb)(\ind\{x_k=1-\kappa\}-\ind\{x_k=\kappa\}).
\end{align*}
Now, changing the order of summations in the last display while using the identity $p_k(\xb)\ind\{x_{k-1}=x_k\}\equiv q_k(\xb)\ind\{x_{k-1}=x_k\}\equiv0$, we obtain the following:
\begin{align}
\big(&\Grf f_{\mathrm{CD}}\big)(\xb)\nonumber\\
=&\sum_{l\in\Sbb}\ind\{x_l=\kappa\}
\sum_{k:k>l}p_k(\xb)\big(\ind\{x_{k-1}=1-\kappa,x_k=\kappa\}-\ind\{x_{k-1}=\kappa,x_k=1-\kappa\}\big)\nonumber\\
+&\sum_{l\in\Sbb}\ind\{x_l=1-\kappa\}
\sum_{k:k<l}p_k(\xb)\big(\ind\{x_{k-1}=\kappa,x_k=1-\kappa\}-\ind\{x_{k-1}=1-\kappa,x_k=\kappa\}\big)\nonumber\\
+&\sum_{l\in\Sbb}\ind\{x_l=\kappa\}\sum_{k:k>l}
q_{k+1}(\xb)\big(\ind\{x_{k}=\kappa,x_{k+1}=1-\kappa\}-\ind\{x_{k}=1-\kappa,x_{k+1}=\kappa\}\big)\nonumber\\
+&\sum_{l\in\Sbb}\ind\{x_l=1-\kappa\}
\sum_{k:k<l}q_{k+1}(\xb)\big(\ind\{x_{k}=1-\kappa,x_{k+1}=\kappa\}-\ind\{x_{k}=\kappa,x_{k+1}=1-\kappa\}\big).
\label{eq:votergeneratoreffect2}
\end{align}
For sake of simplicity we introduce the following notation:
\[
\ind_{k-1,k}^{\kappa}(\xb):=\ind\{x_{k-1}=1-\kappa,x_k=\kappa\}-\ind\{x_{k-1}=\kappa,x_k=1-\kappa\}\quad(k\in\Sbb).
\]
Furthermore denote by $I_1$, $I_2$, $I_3$ and $I_4$ the first, second, third and fourth sum of \eqref{eq:votergeneratoreffect2}, respectively. Conditioning on whether there is a particle at $l+\oh$ and at $l-\oh$ or not, we get the following transformations for the sums $I_1$, $I_3$ and $I_2$, $I_4$, respectively:
\begin{align*}
I_1=&\sum_{l\in\Sbb}\ind\{x_{l}=\kappa,x_{l+1}=\kappa\}\cdot\sum_{k:k>l}p_{k+1}(\xb)\ind_{k,k+1}^{\kappa}(\xb)\\
+&\sum_{l\in\Sbb}\ind\{x_{l}=\kappa,x_{l+1}=1-\kappa\}\bigg(-p_{l+1}(\xb)+\sum_{k:k>l}p_{k+1}(\xb)\ind_{k,k+1}^{\kappa}(\xb)\bigg),\\
I_3=&\sum_{l\in\Sbb}\ind\{x_l=\kappa,x_{l+1}=\kappa\}\sum_{k:k>l}q_{k+1}(\xb)\ind_{k,k+1}^{1-\kappa}(\xb)\\
+&\sum_{l\in\Sbb}\ind\{x_l=\kappa,x_{l+1}=1-\kappa\}\bigg(q_{l+1}(\xb)+\sum_{k:k>l}q_{k+1}(\xb)\ind_{k,k+1}^{1-\kappa}(\xb)\bigg)\\
-&\sum_{l\in\Sbb}\ind\{x_l=\kappa,x_{l+1}=1-\kappa\}q_{l+1}(\xb),\text{ and}\\
I_2=&\sum_{l\in\Sbb}\ind\{x_{l-1}=1-\kappa,x_l=1-\kappa\}\sum_{k:k<l}p_k(\xb)\ind_{k-1,k}^{1-\kappa}(\xb)\\
+&\sum_{l\in\Sbb}\ind\{x_{l-1}=\kappa,x_l=1-\kappa\}\bigg(p_{l}(\xb)+\sum_{k:k<l}p_k(\xb)\ind_{k-1,k}^{1-\kappa}(\xb)\bigg)\\
-&\sum_{l\in\Sbb}\ind\{x_{l}=\kappa,x_{l+1}=1-\kappa\}p_{l+1}(\xb),\\
I_4=&\sum_{l\in\Sbb}\ind\{x_{l-1}=1-\kappa,x_l=1-\kappa\}\sum_{k:k<l}q_{k}(\xb)\ind_{k-1,k}^{\kappa}(\xb)\\
&\sum_{l\in\Sbb}\ind\{x_{l-1}=\kappa,x_l=1-\kappa\}\bigg(-q_{l}(\xb)+\sum_{k:k<l}q_{k}(\xb)\ind_{k-1,k}^{\kappa}(\xb)\bigg),
\end{align*}
where the inner sums in the second terms of $I_2$ and $I_3$ were (artificially) completed with $p_{l}(\xb)$ and $q_{l+1}(\xb)$, respectively.
Regrouping the terms of $I_1+I_2+I_3+I_4$ (that is of \eqref{eq:votergeneratoreffect2}), we obtain
\begin{align*}
\big(&\Grf f_{\mathrm{CD}}\big)(\xb)=\\
-&\sum_{l\in\Sbb}(p_{l+1}(\xb)+q_{l+1}(\xb))\ind\{x_l=\kappa,x_{l+1}=1-\kappa\}\\
+&\sum_{l\in\Sbb}\ind\{x_l=\kappa,x_{l+1}=\kappa\}\sum_{k:k>l}(p_{k+1}(\xb)-q_{k+1}(\xb))\ind_{k,k+1}^{\kappa}(\xb)\\
+&\sum_{l\in\Sbb}\ind\{x_l=\kappa,x_{l+1}=1-\kappa\}\bigg(q_{l+1}(\xb)-p_{l+1}(\xb)+\sum_{k:k>l}(p_{k+1}(\xb)-q_{k+1}(\xb))
\ind_{k,k+1}^{\kappa}(\xb)\bigg)\\
+&\sum_{l\in\Sbb}\ind\{x_{l-1}=1-\kappa,x_l=1-\kappa\}\sum_{k:k<l}(q_k(\xb)-p_k(\xb))\ind_{k-1,k}^{\kappa}(\xb)\\
+&\sum_{l\in\Sbb}\ind\{x_{l-1}=\kappa,x_l=1-\kappa\}\bigg(p_{l}(\xb)-q_{l}(\xb)+\sum_{k:k<l}(q_k(\xb)-p_k(\xb))
\ind_{k-1,k}^{\kappa}(\xb)\bigg),
\end{align*}
where we used that $\ind_{k-1,k}^{\kappa}(\xb)=-\ind_{k-1,k}^{1-\kappa}(\xb)$ holds for every $\xb\in\Sc_{\hgt}$ and $k\in\Sbb$.

Now, the first sum in the last display is clearly at most $-\oh\ul{s}\,(\abs{\yb}-1)$ by assumption \eqref{assump:R21a}.
The remaining terms are by now carefully prepared to match condition \ref{A4} and hence give a non-positive contribution. Indeed, in each of these terms the factors $\ind_{\bll,\bll}^\kappa(\xb)$ give alternate signs whenever the height $\xb$ changes from $1-\kappa$ to $\kappa$ or from $\kappa$ to $1-\kappa$, and the indicators for positions $l-1$ or $l+1$ in front start the parity of these $(1-\kappa)$-$\kappa$ or $\kappa$-$(1-\kappa)$ steps just the right way for assumption \ref{A4} to result in a non-positive contribution for each of these lines (see Figure \ref{fig:dbarw-proof-lemma} for an illustration of the heights). In case of \ref{A4b} we take advantage of the rightmost (leftmost) particle being more likely to jump towards, than against any particle, which is encompassed in \eqref{eq:supplA4}.
\begin{figure}[!ht]
\centering
\includegraphics[scale=0.3]{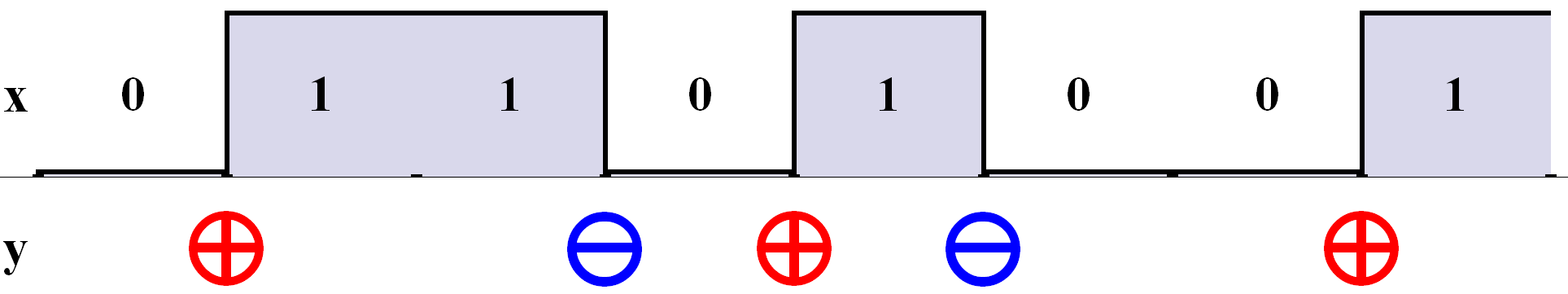}%
\caption[To the proof of Lemma \ref{lemma}.]
{Symmetries in the configurations for the proof of Lemma \ref{lemma}.}\label{fig:dbarw-proof-lemma}%
\end{figure}
Overall, we obtain that the spin-flip part contribution is at most:
\begin{equation}\label{voterpartcontr}
(\Grf f_{\mathrm{CD}})(\xb) \leq \ul{s}-\ul{s}\frac{\abs{\yb}}{2}.
\end{equation}

\noindent {\bf II (exclusion part contribution).}
It is easy to see that for a $k\in\Sbb$ we have
\begin{equation*}
\ind\{x_{k+1}-x_k=\pm1\}\big(f_{\mathrm{CD}}(\xb\pm \delta_{k}\mp \delta_{k+1})-f_{\mathrm{CD}}(\xb)\big)=\pm\ch.
\end{equation*}
Now plugging this into \eqref{exclusionpartgenerator} we arrive to
\begin{align}
(\Grex f_{\mathrm{CD}})(\xb)=\sum_{k\in\Sbb}
&\ind\{x_{k+1}-x_k=+1\}b_{k+\oh}^{\opl}(\yb)(f(\xb+\delta_{k}-\delta_{k+1})-f(\xb))\nonumber\\
+&\ind\{x_{k+1}-x_k=-1\}b_{k+\oh}^{\omi}(\yb)(f(\xb-\delta_{k}+\delta_{k+1})-f(\xb))\nonumber\\
=&\ch\cdot\left[\sum_{i\in\Zbb}b_{i}^{\opl}(\yb)-\sum_{j\in\Zbb}b_{j}^{\omi}(\yb)\right]\leq \bar{d}\abs{\yb},\label{swappingpartcontr}
\end{align}
due to assumption \eqref{assump:bdrift} from \ref{A2}. As before we used the connection: $y_i=x_{i+\oh}-x_{i-\oh}$ ($i\in\Zbb$).

Finally summarizing steps {\bf I} and {\bf II}, putting together \eqref{voterpartcontr} and \eqref{swappingpartcontr} we arrive to the following chain of inequalities:
\[
\begin{aligned}
(\Grh f_{\mathrm{CD}})(\xb)
&=\alpha_1\big(\Grf f_{\mathrm{CD}}\big)(\xb)+\alpha_2\big(\Grex f_{\mathrm{CD}}\big)(\xb)\\
&\leq \alpha_1\ul{s}-\big(\alpha_1\ul{s}-2\alpha_2\bar{d}\big)\frac{\abs{\yb}}{2} = C-c\abs{\yb}
\end{aligned}
\]
with $C=\alpha_1\ul{s}$ and $c=\oh\alpha_1\ul{s}-\alpha_2\bar{d}$, which are just assumed to be strictly positive.
\end{proof}
\begin{lemma}\label{lemmagenerator}
Under assumptions \eqref{assump:R21b} from \ref{A1}, \eqref{assump:R22b} and \eqref{assump:rateBb} from \ref{A2}, for every $t>0$ and $\Xb_0=\Xb(0)\in\Sc_{\hgt}$ we have
\begin{equation}\label{lemmamarti}
\Exp[w(\Yb(t))]\leq 2+f_{\mathrm{CD}}(\Xb_0)+\int_{0}^t\Exp\left[(\Grh f_{\mathrm{CD}})(\Xb(s))\right]\,\dr s<+\infty.
\end{equation}
\end{lemma}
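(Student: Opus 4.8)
The plan is to reduce the statement to two independent ingredients: a deterministic inequality bounding the width by the number of inversions, and a localized Dynkin identity for $f_{\mathrm{CD}}$. First I would establish the pointwise bound
\[
w(\yb)\le 2+f_{\mathrm{CD}}(\xb)
\]
for every $\yb\in\Sc$ with its associated $\xb\in\Sc_{\hgt}$, and then prove
\[
\Exp\big[f_{\mathrm{CD}}(\Xb(t))\big]\le f_{\mathrm{CD}}(\Xb_0)+\int_0^t\Exp\big[(\Grh f_{\mathrm{CD}})(\Xb(s))\big]\,\dr s .
\]
Taking expectations in the first display and substituting the second yields \eqref{lemmamarti}, while finiteness of the right-hand side is read off from Lemma \ref{lemma}.

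For the combinatorial bound I would argue as follows for $\ch(\yb)=+1$ (the case $\ch(\yb)=-1$ being symmetric, and $\abs{\yb}=1$ trivial). Then $x_k=0$ for $k<i_{\mathrm{left}}$ and $x_k=1$ for $k>i_{\mathrm{right}}$, and on the $w(\yb)-1$ half-integer sites strictly between $i_{\mathrm{left}}$ and $i_{\mathrm{right}}$ the heights split into maximal constant blocks. Writing $O$ and $Z$ for the numbers of these bulk sites carrying a $1$ and a $0$, respectively, one has $O+Z=w(\yb)-1$. The leftmost bulk $1$, sitting at $i_{\mathrm{left}}+\oh$, lies to the left of all $Z$ bulk $0$'s and hence forms $Z$ inversions; the rightmost bulk $0$, sitting at $i_{\mathrm{right}}-\oh$, lies to the right of all $O$ bulk $1$'s and hence forms $O$ inversions; the two families share only the single pair built from that $1$ and that $0$. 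Inclusion--exclusion gives $f_{\mathrm{CD}}(\xb)\ge O+Z-1=w(\yb)-2$, which is the claimed inequality.

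The martingale identity is where the real work lies, because the branching rates may be unbounded. I would localize at $\sigma_N:=\inf\{t:\abs{\Yb(t)}\ge N\}$. Up to $\sigma_N$ the particle number is below $N$, so, exactly as in the proof of Proposition \ref{propwelldefined}, the total jump rate is at most $\alpha_1 N+\alpha_2 B(N)<+\infty$; the stopped process therefore makes finitely many jumps and the elementary Dynkin identity
\[
\Exp\big[f_{\mathrm{CD}}(\Xb(t\wedge\sigma_N))\big]=f_{\mathrm{CD}}(\Xb_0)+\Exp\int_0^{t\wedge\sigma_N}(\Grh f_{\mathrm{CD}})(\Xb(s))\,\dr s
\]
holds. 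Its left-hand side is finite since $f_{\mathrm{CD}}\le w^2$ and, by the coupling in the proof of Proposition \ref{propvariance}, $W(s)\le Q^{w_0}(s)\le Q^{w_0}(t)$ for $s\le t$, the dominating birth process $Q^{w_0}$ having finite moments. The same domination, together with $\abs{\yb}\le w(\yb)$, shows $\sup_{s\le t}\abs{\Yb(s)}\le Q^{w_0}(t)<+\infty$ a.s., so $\sigma_N>t$ for all large $N$ and in particular $t\wedge\sigma_N\to t$. Letting $N\to+\infty$: on the left $f_{\mathrm{CD}}(\Xb(t\wedge\sigma_N))\to f_{\mathrm{CD}}(\Xb(t))$ a.s., and Fatou's lemma gives $\Exp[f_{\mathrm{CD}}(\Xb(t))]\le\liminf_N\Exp[f_{\mathrm{CD}}(\Xb(t\wedge\sigma_N))]$; on the right the sign input $(\Grh f_{\mathrm{CD}})(\xb)\le C$ from Lemma \ref{lemma} lets me write the integrand as $C-\phi$ with $\phi\ge0$ and invoke monotone convergence and Tonelli, identifying the limit of the right-hand side with $f_{\mathrm{CD}}(\Xb_0)+\int_0^t\Exp[(\Grh f_{\mathrm{CD}})(\Xb(s))]\,\dr s$.

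Finally, finiteness and the main difficulty. Because $(\Grh f_{\mathrm{CD}})(\xb)\le C$ pointwise, the integrand is bounded above by $C$ and the integral does not exceed $Ct$, so the right-hand side of \eqref{lemmamarti} is at most $2+f_{\mathrm{CD}}(\Xb_0)+Ct<+\infty$. I expect the main obstacle to be precisely the exchange of limit and expectation under unbounded branching: this is the reason for localizing at $\sigma_N$, for importing non-explosion and moment control from the coupling of Proposition \ref{propvariance}, and for using the one-sided estimate of Lemma \ref{lemma} (which controls the integrand only from above) to drive the convergence on the right via monotone convergence rather than a naive bounded-convergence argument.
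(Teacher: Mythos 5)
Your combinatorial estimate $w(\yb)\le 2+f_{\mathrm{CD}}(\xb)$ is correct and is exactly the ``simple estimate'' the paper uses, and your overall architecture (localize, Dynkin, pass to the limit) matches the paper's. But there is a genuine gap in the limit passage: you drive the convergence of the right-hand side with the one-sided bound $(\Grh f_{\mathrm{CD}})(\xb)\le C$ from Lemma \ref{lemma}, and that bound is simply not available under the hypotheses of Lemma \ref{lemmagenerator}. The lemma assumes only \eqref{assump:R21b}, \eqref{assump:R22b} and \eqref{assump:rateBb}; Lemma \ref{lemma} additionally requires \eqref{assump:R21a}, \eqref{assump:bdrift}, assumption \ref{A4} and $\alpha_1\ul{s}>2\alpha_2\bar{d}$. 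Without \ref{A4} the drift sums in the spin-flip contribution need not be sign-favorable and $(\Grf f_{\mathrm{CD}})(\xb)$ can genuinely be of order $+w(\yb)^2$, and without \eqref{assump:bdrift} the exclusion contribution is only controlled by $B_{\abs{\yb}}\abs{\yb}\lesssim\bar{D}\abs{\yb}^2$ rather than $\bar{d}\abs{\yb}$; so no constant upper bound $C$ exists in this generality. The paper's proof deliberately avoids Lemma \ref{lemma} (its remark notes a Lemma-\ref{lemma}-based proof is possible, but only ``under mild conditions'' does it prove the statement as stated): it derives the \emph{two-sided} bound $\abs{(\Grh f_{\mathrm{CD}})(\xb)}\le\Gamma\,w(\yb)^2$ from $\abs{S(k)}\le w(\yb)$, \eqref{assump:R21b}, \eqref{assump:R22b} and \eqref{assump:rateBb}, and then gets convergence by uniform integrability, dominating $\abs{M^{(n)}(t)}$ by $(1+\Gamma t)\,Q^{w_0}(t)^2$, which is integrable by Proposition \ref{propvariance}. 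Your Fatou/monotone-convergence scheme needs no sign information once you replace the constant $C$ by this quadratic bound, so the repair is straightforward --- but as written your argument proves the lemma only under the strictly stronger hypotheses of Theorem \ref{tightness}, and the same defect infects your finiteness claim ``integral $\le Ct$'' at the end.

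A secondary soft spot: you localize at $\sigma_N=\inf\{t:\abs{\Yb(t)}\ge N\}$ and call the stopped Dynkin identity ``elementary'' because the jump rates are then bounded by $\alpha_1N+\alpha_2B(N)$. But stopping at particle number does not bound $f_{\mathrm{CD}}$ or its jump increments: with fewer than $N$ particles the width, and hence $\abs{S(k)}\le w(\yb)$, can be arbitrarily large, so finitely many jumps alone does not give the integrability needed for the stopped martingale property; you would still have to invoke the $Q^{w_0}$-domination to justify it. The paper sidesteps this by stopping at the \emph{width} level, $\tau_W^{(n)}=\inf\{t:W(t)>n\}$, before which $f_{\mathrm{CD}}\le n^2$, all rates are bounded and all jump sizes are bounded, so $M^{(n)}$ is immediately a martingale; the $Q^{w_0}$-coupling is then reserved for the single uniform-integrability step.
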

\begin{remark}
It is clear by means of Proposition \ref{propvariance} that for every $t>0$ every moment of $f_{\mathrm{CD}}(\Xb(t))$ is finite. However, \emp{a priori} we cannot assert the same for $(\Grh f_{\mathrm{CD}})(\Xb(t))$. Under mild conditions this will be proved along with \eqref{lemmamarti} which in fact could also be proved from Lemma \ref{lemma} directly.
\end{remark}
\begin{proof}[Proof of Lemma \ref{lemmagenerator}.]
The proof relies on a standard martingale argument. The previously defined coupling and the proof of Proposition \ref{propvariance} yields that $Q^{w_0}(t)$ dominates $W(t)$ for all $t\geq0$. Setting up the following stopping times:
\[
\tau_Q^{(n)}:=\inf\{t\geq0:Q(t)>n\},\quad \tau_{W}^{(n)}:=\inf\{t\geq0:W(t)>n\},
\]
where $n\in\Zbb^+$, it is straightforward that the process:
\[
M^{(n)}(t):=f_{\mathrm{CD}}\big(\Xb\big(t\wedge \tau_{W}^{(n)}\big)\big)-\int_{0}^{t\wedge\tau_{W}^{(n)}}(\Grh f_{\mathrm{CD}})(\Xb(s))\,\dr s,
\]
is a continuous time martingale. Since $\tau_Q^{(n)} \leq \tau_{W}^{(n)}$ holds for all $n$, $\tau_{W}^{(n)}\to+\infty$ a.s.\ and we obtain that $M^{(n)}(t)\to M(t)$ a.s. as $n\to+\infty$, where
\[
M(t)=f_{\mathrm{CD}}(\Xb(t))-\int_{0}^{t}(\Grh f_{\mathrm{CD}})(\Xb(s))\,\dr s.
\]
Now recalling \eqref{eq:lemmaS}, it is clear that $\abs{S(k)}\leq w(\yb)$ holds for every $k\in\Sbb$ and $\xb\in\Sc_{\hgt}$.
At this time we can afford to be more generous to obtain a rougher estimate on the effect of the spin-flip part generator on $f_{\mathrm{CD}}$.
Taking into account \eqref{eq:lemmaS}, \eqref{eq:votergeneratoreffect} and assumption \eqref{assump:R21b} [\ref{A1}] we have the following bound
\[
|(\Grf f_{\mathrm{CD}})(\xb)| \leq \sum_{k\in\Sbb}(p_{k}(\xb)+q_{k+1}(\xb))\abs{S(k)}\leq 2w(\yb)^2,
\]
since $\abs{S(k)}\leq w(\yb)$ holds for any $k\in\Sbb$. Hence
\[
\abs{(\Grh f_{\mathrm{CD}})(\xb)} \leq
\alpha_1|(\Grf f_{\mathrm{CD}})(\xb)|+\alpha_2|(\Grex f_{\mathrm{CD}})(\xb)|
\leq2\alpha_1w(\yb)^2+\alpha_2B_{\abs{\yb}}\abs{\yb},
\]
where we also used \eqref{swappingpartcontr} with the rougher upper bounds provided by \eqref{assump:R22b}. Since $B_{\abs{\yb}}$ grows asymptotically at most linearly by \eqref{assump:rateBb} we have a $m>0$ and $n'\in\Nbb$ such that $\sup_{N\geq n'}\max_{1\leq n\leq N}B_n/N\leq \bar{D}$ and
\[
\abs{(\Grh f_{\mathrm{CD}})(\xb)} \leq 2\alpha_1w(\yb)^2 + \alpha_2m\abs{\yb} + \alpha_2\bar{D}\abs{\yb}^2
\leq \Gamma\;w(\yb)^2
\]
hold, where $\Gamma=2\alpha_1+\alpha_2m+\alpha_2\bar{D}$. From this, one can conclude that:
\begin{align}
|M^{(n)}(t)|\leq f_{\mathrm{CD}}\big(\Xb\big(t\wedge \tau_{W}^{(n)}\big)\big)+
\Gamma\int_{0}^t W(s)^2\,\dr s \leq \big(1  + \Gamma\,t\big)\,Q^{w_0}(t)^2,\label{ineq:unifint}
\end{align}
which is due to the fact that the number of inversions at time $s$ is bounded above by the squared maximal width $W(s)^2$ and that $W(s)\leq Q^{w_0}(s)\leq Q^{w_0}(t)$ holds a.s.\ in the coupling of the proof of Proposition \ref{propvariance} for every $0\leq s\leq t$. As a result of \eqref{ineq:unifint} and Proposition \ref{propvariance} the process $(M^{(n)}(t))_{t\geq0}$ is uniformly integrable in $n$, hence
\[
\Exp[f_{\mathrm{CD}}(\Xb_0)]+\int_0^t\Exp[(\Grh f_{\mathrm{CD}})(\Xb(s))]\,\dr s=\Exp[f_{\mathrm{CD}}(\Xb(t))] \geq \Exp[W(t)]-2,
\]
where the last inequality follows from the simple estimate $f_{\mathrm{CD}}(\xb)\geq w(\yb)-2$.
\end{proof}
The proof of Theorem \ref{tightness} will go by contradiction, the essence of which is formulated in Proposition \ref{veryuntight} below.
Before that we prove a lemma that we will need subsequently.
\begin{lemma}\label{proplemma}
Suppose that $\alpha_1>0$, \ref{A0}, \eqref{assump:R21a} from \ref{A1} and
\begin{equation}\label{ind1}
\int_0^T\Prob\big\{\abs{\Yb(t)}<N\,\big\}\,\dr t=\ordo(T)
\end{equation}
hold for some odd positive integer $N$ and $\Yb(0)\in\Sc$. Then we have the following:
\begin{align}
\int_0^T\Prob\big\{\exists s\in[0,S]& \text{ such that } \abs{\Yb(t+s)} < N\big\}\,\dr t=\ordo(T),\text{ and }\label{ind2}\\
\int_{0}^T\Prob\big\{\exists s\in[0,S]
\text{ such that }
&\abs{\Yb(t+s)}=N\text{ and }Y_{i_1}(t+s)=1=-Y_{i_2}(t+s)\nonumber\\
&\text{ for some $i_1,i_2\in\Zbb$ such that $|i_1-i_2|\leq L$}\big\}\,\dr t=\ordo(T)\label{ind3}
\end{align}
for every fixed $S>1$ and $L\in\Zbb^+$.
\end{lemma}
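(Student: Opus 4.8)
The two assertions are naturally obtained one after the other: first \eqref{ind2} by an occupation-time (excursion) argument, and then \eqref{ind3} by reducing it to \eqref{ind2} through the random-walk ellipticity \eqref{assump:R21a}. Throughout I would use that at each step the particle number changes only by $0$ or $\pm2$ and that $\abs{\Yb(t)}$ is always odd, so that $\abs{\Yb}<N$ means $\abs{\Yb}\leq N-2$, and any passage from $\{\abs{\Yb}\geq N\}$ into $\{\abs{\Yb}<N\}$ must go through the value $N$ followed by an annihilation.

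For \eqref{ind2} I would first rewrite both integrals as expected occupation times via Fubini. Writing $\mathcal{T}=\{u\geq0:\abs{\Yb(u)}<N\}$, the set of ``bad'' times in \eqref{ind2} is precisely the left-$S$-extension $\{t:[t,t+S]\cap\mathcal{T}\neq\emptyset\}$, and since $\mathcal{T}$ is a disjoint union of (random) intervals $[a_j,b_j]$ this extension equals $\bigcup_j[a_j-S,b_j]$. Hence its Lebesgue measure inside $[0,T]$ is at most $\mathrm{Leb}(\mathcal{T}\cap[0,T+S])+S\cdot\#\{j\}$, where $\#\{j\}$ counts the excursions of $\abs{\Yb}$ below $N$ started in $[0,T+S]$. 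Taking expectations, the first term is $\int_0^{T+S}\Prob\{\abs{\Yb(u)}<N\}\,\dr u=\ordo(T)$ by \eqref{ind1}, so it remains to show $\Exp[\#\{j\}]=\ordo(T)$. For this I would bound the \emph{expected duration} of a single excursion from below by a constant $\delta>0$: while $\abs{\Yb}<N$ the configuration carries at most $N-2$ particles, so (by translation invariance \ref{A0} together with the jump rates being bounded on the finite family of configurations with at most $N$ particles, as guaranteed by the standing bounds \eqref{assump:R21b} and \eqref{assump:R22b}) its total jump rate is at most a constant $\bar\Lambda_N$; the excursion then lasts at least one holding time, which stochastically dominates an $\mathrm{Exp}(\bar\Lambda_N)$ variable, giving expected length $\geq\delta:=1/\bar\Lambda_N$. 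Applying the strong Markov property at the successive excursion starts yields $\Exp[\mathrm{Leb}(\mathcal{T}\cap[0,T+S])]\geq\delta\cdot\Exp[\#\{j\}]$ up to boundary terms, whence $\Exp[\#\{j\}]\leq\ordo(T)/\delta=\ordo(T)$ and \eqref{ind2} follows.

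For \eqref{ind3} the plan is to deduce it from \eqref{ind2} applied with the enlarged window $S+\tau$ (still $>1$). Let $E_t$ be the event in \eqref{ind3}, and let $\sigma\in[t,t+S]$ be the first time at which $\abs{\Yb}=N$ and a $\opl/\omi$ pair sits within distance $L$. I would establish a \emph{uniform} lower bound $\rho>0$ for the conditional probability, given $\mathcal{F}_\sigma$, that $\abs{\Yb}$ drops below $N$ somewhere in $[\sigma,\sigma+\tau]$: from $\sigma$, the nearest consecutive $\opl/\omi$ pair can be walked towards one another and annihilated in at most $L$ directed random-walk steps, each of rate bounded below by a positive constant extracted from $\alpha_1>0$ and \eqref{assump:R21a}, so this prescribed sequence of steps (ending in an annihilation) is realized before time $\tau$ with probability at least some $\rho>0$. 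Writing $F_t$ for the event that $\abs{\Yb}<N$ somewhere in $[t,t+S+\tau]$, the strong Markov property gives $\Prob(E_t)\leq\rho^{-1}\Prob(F_t)$; integrating over $t\in[0,T]$ and invoking \eqref{ind2} for the window $S+\tau$ produces \eqref{ind3}.

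The main obstacle in both parts is the possible interference of fast branching. In \eqref{ind2} it is precisely what forces the excursion-duration bound $\delta>0$ to rest on the jump rates being bounded on configurations of size at most $N$: without such control, excursions below $N$ could be arbitrarily short and frequent while still obeying \eqref{ind1}, and the conclusion would genuinely fail. In \eqref{ind3} the same issue reappears, since while steering the chosen pair together a competing branching event must not first push $\abs{\Yb}$ up and spoil the annihilation; thus $\rho$ must be extracted on an event prescribing the relevant walk steps before any competing transition, again using that for configurations of size $O(N)$ the competing rates are uniformly bounded. The remaining technical point is to obtain from \eqref{assump:R21a} a genuine \emph{directional} rate lower bound for the particle that must move towards its partner, rather than merely a bound on the symmetric sum of the two minima.
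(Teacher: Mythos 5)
Your proof is correct, but it takes a genuinely different route from the paper's, at least for \eqref{ind2}. The paper first shows (using $\alpha_1>0$, \ref{A0} and \eqref{assump:R21a}) that, conditionally on $\tau_N(t)\leq S$, with a uniform probability $p>0$ there is a whole unit interval inside $[t,t+S+1]$ on which $\abs{\Yb}<N$, and then bounds $\int_0^T\Prob(A_S(t))\,\dr t$ by a grid discretization of mesh $S/n$ together with a ``pre-point'' decomposition through the events $B_{i,n}(t)$, sending $T\to+\infty$ and then $n\to+\infty$. Your occupation-time/excursion count replaces all of this: the left-$S$-extension bound $\mathrm{Leb}(\mathcal{T}\cap[0,T+S])+S\cdot\#\{j\}$, plus the holding-time lower bound (valid because an excursion lasts at least until the first transition, and the total jump rate on $\{\abs{\yb}\leq N-2\}$ is at most $\alpha_1N+\alpha_2B(N)$), gives $\Exp[\#\{j\}]=\ordo(T)$ by a Wald-type application of the strong Markov property at the excursion starts; your ``boundary terms'' are handled by capping each duration at $1$ (so the constant is $\delta'=\Exp[\min(\mathrm{Exp}(\bar\Lambda_N),1)]$ rather than $1/\bar\Lambda_N$, which changes nothing) and enlarging the horizon to $T+S+1$, where \eqref{ind1} still applies since $S$ is fixed. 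Notably, your \eqref{ind2} uses only the upper rate bounds \eqref{assump:R21b} and \eqref{assump:R22b} and none of the ellipticity hypotheses; the paper's proof also invokes these upper bounds (the factor $\alpha_1N+\alpha_2B(N)$ in \eqref{limsup}) without listing them in the lemma, so you are no worse off there. For \eqref{ind3} the two arguments are conceptually parallel but you get a cleaner reduction: the paper extracts a uniform $q>0$ for the stronger event that $\abs{\Yb}$ stays below $N$ throughout a unit interval and then reruns its grid bound on $A_{2S}(t)$, whereas you only need the weaker one-shot probability $\rho>0$ of dropping below $N$ within $[\sigma,\sigma+\tau]$ (and here any competing annihilation also helps, so only branchings must be beaten), after which $\Prob(E_t)\leq\rho^{-1}\Prob(F_t)$ and \eqref{ind2} with window $S+\tau$ finish the job. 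Both your $\rho$ and the paper's $p,q$ rest on the same implicit reading of \eqref{assump:R21a} as a directional ellipticity — that each present particle can be steered toward its consecutive opposite-charge partner at a rate bounded below — which you flag honestly and the paper asserts without further justification, so this is not a gap relative to the paper. One cosmetic slip: the configurations with at most $N$ particles do not form a finite family even modulo translation (inter-particle gaps are unbounded), but this is harmless since \eqref{assump:R21b} and \eqref{assump:R22b} are uniform over all of $\Sc$.
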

\begin{proof}[Proof of Lemma \ref{proplemma}.]
First we define the following stopping time:
\begin{equation}\label{tauidountight}
\tau_n(t)=\inf\{u \geq 0 : \abs{\Yb(t+u)}<n\},
\end{equation}
for each fixed $t>0$, where $n$ is an odd integer. Notice that under the assumptions $\alpha_1>0$, \ref{A0} and \eqref{assump:R21a}, we have:
\[
0<\inf\left\{\Prob\left(\abs{\Yb(t)}< N \mbox{ for every $t\in[0,1]$}\right):|\Yb(0)| < N\right\}=:p,
\]
where $\Yb(0)\in\Sc$. Furthermore defining $A_S(t)$ to be the event of having at most $N-2$ particles during a time interval of length one somewhere between time $t$ and $t+S+1$, it follows by the strong Markov property that:
\begin{align*}
&\Prob(A_S(t)\,|\,\exists s\in[0,S] \text{ such that } \abs{\Yb(t+s)}<N)\\
&\geq\Prob(\abs{\Yb(u)}<N \text{ $\forall u\in[t+\tau_N(t),t+\tau_N(t)+1]$}\,\vphantom{\frac{1}{1}}|\,\tau_N(t)\leq S)\geq p.
\end{align*}
Using the notation $B_{i,n}(t)$ for the event $\{|\Yb(t+i S/n-S/n^2)|\geq N\}$ ($i\leq n$), the estimates follow:
\begin{align}
p\int_{0}^T
&\Prob\left(\exists s\in[0,S]:\abs{\Yb(t+s)} < N\right)\,\dr t\leq\int_0^T\Prob(A_S(t))\,\dr t\nonumber\\
&\leq\int_0^T\Prob\bigg(\bigcup_{i=0}^{n}\left\{\abs{\Yb\left(t+i S/n\right)}<N\right\}\bigg)\,\dr t\nonumber\\
&\leq\sum_{i=1}^{n}\int_0^T\big[\Prob(\abs{\Yb\left(t+i S/n\right)}<N\,|\,B_{i,n}(t))
+\Prob(\abs{\Yb\left(t+i S/n\right)}<N)\big]\,\dr t\nonumber\\
&\leq nT(\alpha_1N+\alpha_2B(N))\frac{S}{n^2}+nT\,\ordo\bigg(\frac{1}{n^2}\bigg)+n\int_0^{T+S}\Prob(\abs{\Yb(t)}<N)\,\dr t
\label{limsup}
\end{align}
for every large $n\geq \left\lceil2S\right\rceil$.
At the last inequality we took advantage of that $(\Yb(t))_{t\geq0}$ is a continuous time Markov process in which a step or branch is executed with rate at most $\alpha_1N+\alpha_2B(N)$, if at most $N$ particles are present.
Divide the left hand-side of \eqref{ind2} by $T$ and use the previous estimate:
\begin{align}
\lim_{T\to+\infty}\frac{1}{T}\int_{0}^T\!\Prob(\tau_N(t)\leq S)\,\dr t
&\leq \frac{S(\alpha_1N+\alpha_2B(N))+1}{p}\frac{1}{n}\nonumber\\
&+\frac{n}{p}\lim_{T\to+\infty}\left(1+\frac{S}{T}\right)\frac{1}{T+S}\int_0^{T+S}\!\!\!\Prob(\abs{\Yb(t)}<N)\,\dr t,\label{limsuputan}
\end{align}
which tends to $0$ as $n\to+\infty$ by \eqref{ind1} ($N$ and $S>0$ are fixed). This shows \eqref{ind2}.

The next step is to prove \eqref{ind3} in a similar fashion. For sake of simplicity we introduce some further notation:
\begin{align*}
D(t):=\{\exists s\in[0,S]\text{ such that }
&\abs{\Yb(t+s)}=N\text{ and }\\
&\,Y_{i_1}(t+s)=1=-Y_{i_2}(t+s)\text{ for some }|i_1-i_2|\le L\}.
\end{align*}
As before by \ref{A0}, \eqref{assump:R21a} (see \ref{A1}) and $\alpha_1>0$, it is straightforward that
\begin{align*}
0<\inf\{&\Prob\{\abs{\Yb(t)}<N\text{ for every $t\in[1,2]$}\}:\\
&\,|\Yb(0)| = N,Y_{i_1}(0)=1=-Y_{i_2}(0)\text{ for some }|i_1-i_2|\le L\}=:q,
\end{align*}
which implies that $\Prob(A_{S+1}(t)\,|\,D(t))\geq q$. Next we proceed by \eqref{limsup}, and use that $2S>S+1$:
\begin{align*}
\frac{q}{T}\int_{0}^T\Prob(D(t))\,\dr t
&\leq\frac1T\int_0^T\Prob(A_{2S}(t))\,\dr t\\
&\leq (\alpha_1N+\alpha_2B(N))\frac{2S}{n}+\ordo\bigg(\frac{1}{n}\bigg)+\frac{n}{T}\int_0^{T+2S}\!\!\Prob(\abs{\Yb(t)}<N)\,\dr t.
\end{align*}
Now dividing by $q$ and making the same changes as we made in \eqref{limsuputan}, finally taking the limits first as $T$, then $n\to+\infty$ we obtain \eqref{ind3}. We notice that throughout the computations the constants $N$, $S>0$ and $L\in\Zbb^+$ were fixed.
\end{proof}
\begin{prop}\label{veryuntight}
Suppose that $\alpha_1>0$, \ref{A0}, \eqref{assump:R21a} from \ref{A1}, \eqref{assump:R22}, \eqref{assump:rateB} from \ref{A2}, and \ref{A3} hold. Then if $(\wtild{\Yb}(t))_{t\geq0}$ was not positive recurrent then we would have
\begin{equation}\label{cesaroconv}
\lim_{T\to+\infty}\frac{1}{T}\int_0^T\Prob\big\{\abs{\Yb(t)}<N\,\big\}\,\dr t=0
\end{equation}
for every odd integer $N$.
\end{prop}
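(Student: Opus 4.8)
The plan is to fix the frame and reduce everything to the leftmost-particle process. Since $\abs{\Yb(t)}=\abs{\wtild{\Yb}(t)}$ by translation invariance, \eqref{cesaroconv} is a statement about the countable irreducible Markov process $\wtild{\Yb}$ on $\wtild{\Sc}$, and I would prove it by induction on the odd integer $N$. The base case $N=3$ is immediate: the set $\{\abs{\wtild{\yb}}<3\}$ consists of the single state with one particle at the origin, and for an irreducible chain that is not positive recurrent the Ces\`aro-average time spent in any fixed state tends to $0$; this is the only place the non-positive-recurrence hypothesis enters. For the inductive step I would assume \eqref{cesaroconv} for some odd $N$ --- which is exactly hypothesis \eqref{ind1} --- and deduce it for $N+2$. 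As $\{\abs{\Yb}<N+2\}=\{\abs{\Yb}<N\}\cup\{\abs{\Yb}=N\}$ and the first event is Ces\`aro-negligible by the inductive assumption, it suffices to prove $\frac{1}{T}\int_0^T\Prob\{\abs{\Yb(t)}=N\}\,\dr t=\ordo(T)$.

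The tool for this is Lemma \ref{proplemma}, whose hypothesis \eqref{ind1} is now granted, yielding \eqref{ind2} and \eqref{ind3} for every fixed $S>1$ and $L\in\Zbb^+$. I would then run a uniform-lower-bound argument in the spirit of the proof of Lemma \ref{proplemma}: from a configuration with exactly $N$ particles the process should, within a fixed time $S$ and with probability bounded below, either drop below $N$ particles or exhibit a close $\opl$-$\omi$ pair (two opposite particles within distance $L$) while still at $N$ particles. Whenever this bound $p>0$ holds, the Markov property gives $\Prob\{\abs{\Yb(t)}=N\}\le p^{-1}\bigl[\Prob(\exists s\in[0,S]:\abs{\Yb(t+s)}<N)+\Prob(\exists s\in[0,S]:\abs{\Yb(t+s)}=N\text{ with a close pair})\bigr]$, and integrating in $t$ the right-hand side is $\ordo(T)$ by \eqref{ind2} and \eqref{ind3}, closing the induction. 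The lower bounds \eqref{assump:R21a} on the walk rates (with $\alpha_1>0$) and \eqref{assump:R22a} on the branching rates are what make the relevant one-step mechanisms --- a nearby pair meeting, or a branching producing an adjacent pair --- fire at a rate bounded away from $0$ whenever the particle number is bounded.

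The main obstacle is precisely the uniformity of this lower bound over configurations of unbounded width. For a configuration possessing a gap of bounded size the mechanism above works with a rate depending only on $N$, $S$ and $L$; the genuine difficulty is the spread-out configurations in which \emph{every} inter-particle gap is large, since within a fixed window neither an annihilation nor a close pair can be forced, and a lone branching only produces a close pair at $N+2$ particles, leaving \eqref{ind3} at level $N$ untriggered. This is where I expect assumption \ref{A3} to be essential: it lets me treat each far-separated particle as evolving autonomously up to an error $H(\abs{\yb},L)\to0$, so that each inter-particle gap behaves like a one-dimensional walk whose rates are bounded below by \eqref{assump:R21a}. Such a walk is recurrent, so the gaps cannot all persist at large values --- two consecutive opposite particles must eventually meet and annihilate --- and I would use this recurrence, rather than a fixed-window estimate, to rule out positive Ces\`aro weight on spread-out states once \eqref{cesaroconv} is known at level $N$. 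Making this argument quantitative and reconciling it with the fixed window $S$ of Lemma \ref{proplemma} (for instance by choosing $L$ and $S$ together and summing over disjoint windows) is the technical heart of the proof.
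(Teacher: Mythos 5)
Your skeleton---induction on odd $N$, the base case $N=3$ via the singleton state of the irreducible chain $\wtild{\Yb}$ and non-positive recurrence, the reduction of the inductive step to Lemma \ref{proplemma}'s conclusions \eqref{ind2} and \eqref{ind3}, and the identification of spread-out configurations as the crux---coincides with the paper's. But the mechanism you propose for the spread-out case is a genuine gap. You argue that by \ref{A3} each inter-particle gap behaves like an autonomous one-dimensional walk whose rates are bounded below by \eqref{assump:R21a}, that such a walk ``is recurrent'', and hence consecutive opposite particles must meet and annihilate. Proposition \ref{veryuntight} does \emph{not} assume \ref{A4}: the attraction condition is used only for Lemma \ref{lemma}, and \eqref{assump:R21a} is a pure non-degeneracy bound, perfectly compatible with (say) every particle drifting outward at constant rate, in which case the gap process is transient and no meeting ever needs to occur. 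Even in cases where the gap walk happens to be recurrent, recurrence gives no uniform-in-configuration estimate on a fixed window $[0,S]$ (meeting times of a null-recurrent walk have infinite mean), so the uniform lower bound $p>0$ in your Markov-property step cannot be recovered along these lines; your own closing caveat about ``reconciling with the fixed window $S$'' is precisely where the argument breaks.

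The paper exits spread-out states \emph{upward}, not downward: the branching you treat as a nuisance (``leaving \eqref{ind3} at level $N$ untriggered'') is the engine. Starting from $N$ pairwise distant particles, the extremal particle's descendant cloud is coupled to an autonomous single-particle process $\hat{\Yb}$, with \ref{A3} controlling the rate of discrepancy (``compensating'') steps via \eqref{couplingestimate}, supplemented by the separation estimate \eqref{gapisbig} and the step-count bound \eqref{stepsarenotbig}. The non-positive-recurrence hypothesis then enters a \emph{second} time, through the already-established level-$3$ statement applied to $\hat{\Yb}$: it yields \eqref{hatYbpici}, i.e.\ at an independent uniform time $\sigma_S$ the lone particle has produced at least three particles with probability close to one, so the total count exceeds $N$ and spread-out $N$-particle states carry vanishing Ces\`aro weight. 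Converting this into \eqref{finalstatement} still requires the stopping time $\tau_{N+2}$, the randomized times $\sigma_T,\sigma_S$, and the chain \eqref{ind4} $\Rightarrow$ \eqref{ind5} $\Rightarrow$ \eqref{finalstatement} of Steps I/b--III, none of which your sketch supplies.
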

\begin{proof}[Proof of Proposition \ref{veryuntight}.]
The basic idea of our argument has its roots in the proof of Proposition 13 in \cite[pp.\ 80--83]{SS08V}. We use induction and follow similar steps to see that \eqref{cesaroconv} holds. The heuristic idea behind the proof is as follows: if $(\wtild{\Yb}(t))_{t\geq0}$, the process $(\Yb(t))_{t\geq0}$ as seen from its leftmost particle position, was not positive recurrent, then it would be an unlikely event that on the long run we have only a few particles. They even must be far from each other, otherwise nearby particles would consume each other so we would end up with fewer particles with high probability. Now, few particles situated far from each other implies that they must keep growing due to the fact that distant particles do not feel each other and a lone particle would result a cloud of at least three particles with high probability. It means that eventually the probability of having only a few particles on the long run would remain small.

By translation invariance, which is formulated in \ref{A0}, the claim \eqref{cesaroconv} trivially follows if $N\in\{1,\,3\}$. Hereinafter for an $A\in\mathbb{R}^+$ we denote by $\sigma_A$ a uniformly distributed random variable over $[0,A]$ which is independent of the process $(\Yb(t))_{t\geq0}$ and of other $\sigma_{A'}$'s. We proceed inductively: assuming that \eqref{cesaroconv} holds for an odd integer $N\geq3$, we prove that
\begin{equation}
\Exp\bigg[\frac{1}{T}\int_0^T\ind\big\{\abs{\Yb(t)}< N+2\;\big\}\,\dr t\bigg]=
\Prob\big\{\abs{\Yb(\sigma_T)}< N+2\;\big\}\longrightarrow 0\label{finalstatement}
\end{equation}
as $T\to+\infty$.

By the induction hypothesis we also have \eqref{ind2} and \eqref{ind3} in the bag from Lemma \ref{proplemma}. We will prove \eqref{finalstatement} through series of statements in the following strategy: from the former ones we will conclude for \eqref{ind4} below (Step I), this will imply \eqref{ind5} (Step II), which in turn provides us \eqref{finalstatement} (Step III).

\noindent {\bf Step I (\eqref{ind2},\eqref{ind3} $\Rightarrow$ \eqref{ind4}).} We show that
\begin{equation}
\lim_{S\to+\infty}\limsup_{T\to+\infty}\frac{1}{T}\int_{0}^T
\Prob\left(\tau_{N+2}(t)\leq S,\,\abs{\Yb(t+\tau_{N+2}(t)+\sigma_S)} < N+2\right)\,\dr t=0,\label{ind4}
\end{equation}
where the stopping time $\tau_N$ was defined in \eqref{tauidountight}.\\
\noindent {\bf Step I/a}. As the first ingredient to accomplish Step I, we prove that for every $\varepsilon>0$ there exists an $S_0$ such that
\begin{equation}\label{YbpicistepIa}
\Prob(\abs{\Yb(\sigma_S)} \leq N)\leq \varepsilon
\end{equation}
holds for every $S\geq S_0$, where $\Yb(0)=\Yb_0$,
\[
\Yb_0=\sum_{j=0}^{N-1}(-1)^{j + (1-\ch(\Yb_0))/2}\,\delta_{i_{\mathrm{right}}-j\uip{\exp(L)}},
\]
$i_{\mathrm{right}}\in\Zbb$, and $L=L(S)\in\Zbb^+$ may depend on $S$.

Let $\hat{\Yb}_0$ be the configuration which contains only the rightmost (leftmost) particle of $\Yb_0$, if $\ch(\Yb_0)=+1$ ($\ch(\Yb_0)=-1$). Let $(\hat{\Yb}(t))_{t\geq0}$ be the process of initial configuration $\hat{\Yb}(0)=\hat{\Yb}_0$ and of dynamics governed by the rates $(r_i,\ell_i,b_i)_{i\in\Zbb}$. Hence, we know that for every $0<\varepsilon<1$ one can choose an $S_0=S_0(\varepsilon)$ such that
\begin{equation}\label{hatYbpici}
\Prob\big(\big|\hat{\Yb}(\sigma_S)\big|\geq 3\big)>1-\frac{\varepsilon}{4}
\end{equation}
holds for every $S\geq S_0$. In what follows by assumption \ref{A3} we will approximate the rightmost (leftmost) branch of particles of $(\Yb(t))_{t\geq0}$ by the process $(\hat{\Yb}(t))_{t\geq0}$ if $L$ is chosen sufficiently large in a finite time interval $[0,S]$, where $\ch(\Yb_0)=+1$ ($\ch(\Yb_0)=-1$). For an illustration, see Figure \ref{fig:dbarw-proof-prop}. We proceed with a series of definitions.

For $j\in\{1,2,\ldots,N\}$ let $\Yb_{j}=(\Yb_{j})_{t\geq0}$ be the system of those particles which are descendants of the $j^{\mathrm{th}}$ particle of $\Yb_0$ during the process $(\Yb(t))_{t\geq0}$. Furthermore let $\nu_L$ be the first time when the rightmost (leftmost) particle of $\Yb_j$ for some $j$ gets to distance $L$ from the leftmost (rightmost) particle of $\Yb_{j+1}$ ($\Yb_{j-1}$), where $1\leq j<N$ ($1<j\leq N$). By the definition of $\Yb_0$ it is clear that $\nu_L$ is almost surely positive for every $L\in\Zbb^+$. Using a similar argument we established in the proof of Proposition \ref{propvariance}, one can choose an $L_0\in\Zbb^+$ such that
\begin{equation}\label{gapisbig}
\Prob(\nu_{L_0} \leq S)\leq \frac{\varepsilon}{4}
\end{equation}
holds for every $L>L_0$, since $\exp(L)-L\to+\infty$ as $L\to+\infty$. We will make the choice for $L$ later.
\begin{figure}[!ht]
\centering
\includegraphics[scale=0.3]{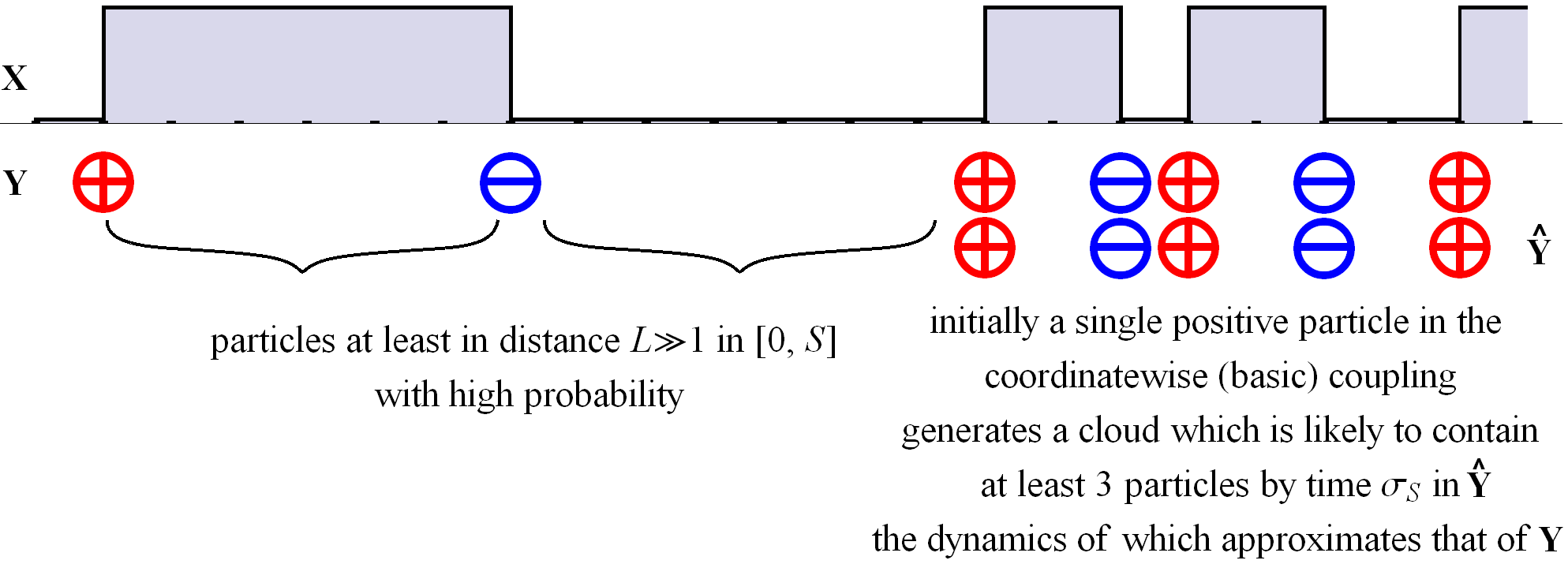}%
\caption[The essence of proof of Proposition \ref{veryuntight}.]{The rough idea behind the proof of Proposition \ref{veryuntight} in the case of $\ch(\Yb_0)=+1$.}\label{fig:dbarw-proof-prop}%
\end{figure}

Our next step is to set up the basic coupling between the processes $(\Yb(t))_{t\geq0}$ and $(\hat{\Yb}(t))_{t\geq0}$. That is we place the two systems one above the other and we let a (possible) particle at position $i\in\Zbb$ to step and branch with rates
\begin{equation}
\big(r_{i}^{\circ}(\Yb)-r_i^{\circ}({\hat{\Yb}})\big)_{+},\;\big(\ell_{i}^{\circ}(\Yb)-\ell_i^{\circ}({\hat{\Yb}})\big)_{+}\text{ and }
\big(b_{i}^{\circ}(\Yb)-b_i^{\circ}({\hat{\Yb}})\big)_{+}\label{eq:compcl1}
\end{equation}
in $\Yb$, while
\begin{equation}
\big(r_i^{\circ}({\hat{\Yb}})-r_{i}^{\circ}(\Yb)\big)_{+},\;\big(\ell_i^{\circ}({\hat{\Yb}})-\ell_{i}^{\circ}(\Yb)\big)_{+}\text{ and }
\big(b_i^{\circ}({\hat{\Yb}})-b_{i}^{\circ}(\Yb)\big)_{+}\label{eq:compcl2}
\end{equation}
in $\hat{\Yb}$, respectively, while these moves occur simultaneously in the two processes with rates
\[
\min\big(r_{i}^{\circ}(\Yb),\ r_i^{\circ}({\hat{\Yb}})\big),\;\min\big(\ell_{i}^{\circ}(\Yb),\ \ell_i^{\circ}({\hat{\Yb}})\big)\text{ and }\min\big(b_{i}^{\circ}(\Yb),\ b_i^{\circ}({\hat{\Yb}})\big),
\]
where $\circ\in\{\opl,\omi\}$. The possible steps indicated by \eqref{eq:compcl1} and \eqref{eq:compcl2} will be called ``compensating'' if and only if at least one of the rates $r_i^{\circ}(\hat{\Yb}),\ell_i^{\circ}(\hat{\Yb})$ or $b_i^{\circ}(\hat{\Yb})$ is positive. The very first time of a compensating step is denoted by $\xi$, while $A_k$ represents the event that the first compensating step that is part of the evolution of the joint system $(\Yb,\hat{\Yb})$ happens to be its $k^{\mathrm{th}}$ step in total.

As the final preparatory step recall that $(N(s))_{0\leq s\leq S}$ counts the number of steps taken by the process $(\Yb(t))_{t\geq0}$ in the time interval $[0,S]$ (see its definition in Proposition \ref{propwelldefined}). In accordance with this the number of steps taken by $(\hat{\Yb}(s))_{0\leq s\leq S}$ is $(\hat{N}(s))_{0\leq s\leq S}$.

Now, one can choose an $M>1$ large enough for
\begin{equation}\label{stepsarenotbig}
\Prob(N(S)+\hat{N}(S) \geq M) \leq \Prob(N(S)\geq M/2) + \Prob(\hat{N}(S)\geq M/2)\leq\frac{\varepsilon}{4}
\end{equation}
to hold. The choice of $M$ can be made independently of that of $L$, since we assumed in \eqref{assump:R22b} that independently of the situation of $k$ particles dropped on the integer lattice the total rate of branching is at most $B(k)$. In fact the estimates coming from Proposition \ref{propwelldefined} do not use the underlying structure of particles. Hence the value of $M$ can be fixed from now. Let $\omega_M$ be the first time when the process $(N(s)+\hat{N}(s))_{0\leq s\leq S}$ exceeds the value $M$. It follows that for every $0\leq s\leq \omega_M$:
\[
\max\big(\abs{\Yb(s)},\big|\hat{\Yb}(s)\big|\,\big)\leq N+1+2M,
\]
since the particle number can grow at most by two due to branching at each step. So until the $M^{\mathrm{th}}$ transition the total number of particles can not exceed the value $N+1+2M$ providing initially at most $N+1$ of them was present.

\noindent Now, using the previous notations we arrive to the following estimates:
\begin{align}
\Prob\big(\xi \leq \min(S,\nu_L,\omega_M)\big)&=\sum_{k=1}^M\Prob\big(A_k,\xi \leq \min(S,\nu_L,\omega_M)\big)\nonumber\\
&\leq M (N+1+2M)\,\delta(L) \leq \frac{\varepsilon}{4},\label{couplingestimate}
\end{align}
where
\[
\frac{\alpha_1+\alpha_2}{\alpha_1\ul{s}}\cdot\max_{1\leq m \leq N+3M} H(m, L) =: \delta(L) \leq \frac{\varepsilon}{4 M (N+1+2M)},
\]
providing that $\alpha_1>0$, which latter inequality holds if $L$ is chosen big enough. In plain words the probability that until $\min(S,\nu_L,\omega_M)$ a compensating clock of a particle rang is at most $\delta(L)$, which can be made arbitrarily small if $L>L_0$ by assumption \ref{A3} (see \eqref{assump:3decay}). Now, putting the above pieces together we arrive to:
\begin{align*}
\Prob(\abs{\Yb(\sigma_S)}\leq N) \leq
&\Prob(\abs{\Yb(\sigma_S)}\leq N,\nu_L \leq S)+\Prob(\abs{\Yb(\sigma_S)}\leq N,\nu_L > S)\\
\leq
&\Prob(\nu_L\leq S)+\Prob(\abs{\Yb(\sigma_S)}\leq N,\nu_L>S,\omega_M \leq S)\\
&+\Prob(\abs{\Yb(\sigma_S)}\leq N,\nu_L>S,\omega_M>S)\\
\leq
&\Prob(\nu_L\leq S)+\Prob(\omega_M \leq S)\\
&+\Prob(\abs{\Yb(\sigma_S)}\leq N,\nu_L>S,\omega_M>S,\xi\leq S)\\
&+\Prob(\abs{\Yb(\sigma_S)}\leq N,\nu_L>S,\omega_M>S,\xi > S)\\
\leq
&\Prob(\nu_L\leq S)+\Prob(\omega_M \leq S)\\
&+\Prob(\xi \leq \min(S,\nu_L,\omega_M))+\Prob\big(|\hat{\Yb}(\sigma_S)| < 3\big)\\
&+\Prob(\abs{\Yb(\sigma_S)}\leq N, |\hat{\Yb}(\sigma_S)| \geq 3, \nu_L > S, \xi > S)\\
\leq&\frac{\varepsilon}{4}+\frac{\varepsilon}{4}+\frac{\varepsilon}{4}+\frac{\varepsilon}{4}=\varepsilon,
\end{align*}
since the last probability on the right-hand side is zero and the preceding ones are bounded above by $\varepsilon/4$ using \eqref{gapisbig}, \eqref{stepsarenotbig}, \eqref{couplingestimate} and \eqref{hatYbpici} in that order. So we can conclude that \eqref{YbpicistepIa} holds.

\noindent {\bf Step I/b}. Recall that so far we proved that for every $0<\varepsilon<1$ one can choose an $S_0$ such that for every $S\geq S_0$:
$\Prob(\abs{\Yb(\sigma_S)} \leq N)\leq\varepsilon$ holds for each $\Yb(0)$, where $\abs{\Yb(0)}=N$ and the distance between any of its two particles is at least $L'=\uip{\exp(L)}$, where $L'$ may depend on $S$. Now, notice that
\begin{align}
\{&\tau_{N+2}(\sigma_T)\leq S\}\subset\nonumber\\
&\;\{\tau_N(\sigma_T)\leq S\}
\cup\{\exists\,s\in[0,S]:\abs{\Yb(\sigma_T+s)}=N\text{ and there are two $L'$-close particles}\}\nonumber\\
&\;\cup\{\Yb(\sigma_T+\tau_{N+2}(\sigma_T))\text{ contains exactly $N$ particles and between any two of them,}\nonumber\\
&\hspace{3.6cm}\text{ the distance is at least $L'$}\}.\label{propstepIb}
\end{align}
Thus by what we have just proved in Step I/a, the probability that
\[
\abs{\Yb(\sigma_T+\tau_{N+2}(\sigma_T)+\sigma_S)}\leq N
\]
occurs, given that the last event of the right-hand side of \eqref{propstepIb} holds, is uniformly bounded (in $T$) above by $\varepsilon$ if $S\geq S_0$ and $L'$ is chosen properly.
Take into account \eqref{ind2} and \eqref{ind3}, which tell us that the probabilities,
$\tau_{N}(\sigma_T)$ is at most $S$ while $\Yb(\sigma_T+\tau_{N+2}(\sigma_T))$ consists of $N$ particles such that there exist two particles which are in distance at most $L'$, are small. These and \eqref{propstepIb} together imply that
\[
\limsup_{T\to+\infty}\Prob(\tau_{N+2}(\sigma_T)\leq S,\,\abs{\Yb(\sigma_T+\tau_{N+2}(\sigma_T)+\sigma_S)}\leq N) \leq \varepsilon,
\]
thus we are ready with the proof of \eqref{ind4}.

\noindent {\bf Step II (\eqref{ind4} $\Rightarrow$ \eqref{ind5}).} In the penultimate step from \eqref{ind4} we conclude that:
\begin{equation}
\lim_{S\to+\infty}\limsup_{T\to+\infty}\frac{1}{T}\int_{0}^T\frac{1}{S}\int_{0}^S\Prob\left(
\abs{\Yb(t+s)}\leq N\right)\,\dr s\,\dr t=0.\label{ind5}
\end{equation}
A straightforward computation yields:
\begin{align*}
\int_{0}^S\Prob(\abs{\Yb(t+s)}\leq N)\,\dr s
=&\Exp\left[\ind\{\tau_{N+2}(t)\leq S\}\int_{\tau_{N+2}(t)}^S\ind\{\abs{\Yb(t+s)}\leq N\}\,\dr s\right]\\
\leq&\Exp\left[\ind\{\tau_{N+2}(t)\leq S\}\int_{\tau_{N+2}(t)}^{\tau_{N+2}(t)+S}\ind\{\abs{\Yb(t+s)}\leq N\}\,\dr s\right]\\
=&\int_{0}^{S}\Prob\big(\tau_{N+2}(t)\leq S,\,\abs{\Yb(t+\tau_{N+2}(t)+s)}\leq N\big)\,\dr s.
\end{align*}
After dividing by $T$ and $S$ and taking the appropriate limits we obtain \eqref{ind5}.

\noindent {\bf Step III (\eqref{ind5} $\Rightarrow$ \eqref{finalstatement}).}
For every $T>0$ one can choose an $S(T)$ due to \eqref{ind5} such that $T>S(T)$,
\[
\lim_{T\to+\infty}\frac{S(T)}{T}=0,\mbox{ and }\lim_{T\to+\infty}\Prob\big(\abs{\Yb(\sigma_T+\sigma_{S(T)})} < N+2\big)=0\mbox{ holds}.
\]
Let
\[
f_{T,S}(x):=\frac{T+S-x+(x-S)\ind\{x\leq S\}+(x-T)\ind\{x\leq T\}}{TS}\qquad(0\le x\le T+S).
\]
It is easy to see that $f_{T,S}$ is the probability density function of the random variable $\sigma_T+\sigma_S$. It follows that
\begin{align*}
&\abs{\Prob\big(\abs{\Yb(\sigma_{T}+\sigma_{S(T)})} < N+2\big)-\Prob\big(\abs{\Yb(\sigma_{T})} < N+2\big)}\\
&\leq\int_{0}^{T+S(T)}\Prob(\abs{\Yb(t)}\leq N)\abs{f_{T,S(T)}(t)-\frac1T\ind\{t\leq T\}}\,\dr t\\
&\leq\int_{0}^{T+S(T)}\bigg[\frac{S(T)-t}{T S(T)}\ind\{t\leq S(T)\}+\frac{T+S(T)-t}{T S(T)}\ind\{t>T\}\bigg]\,\dr t=\frac{S(T)}{T},
\end{align*}
which tends to $0$ as $T\to+\infty$, that is we finished the proof of the proposition.
\end{proof}

Now we are ready to prove the main theorem of Subsection \ref{sec:tightness}.
\begin{proof}[Proof of Theorem \ref{tightness}.]
Lemma \ref{lemma} and \ref{lemmagenerator} imply the following inequality:
\begin{equation}\label{eq:firstintheorem}
0 \leq \Exp[W(t)] \leq \big(2+f_{\mathrm{CD}}(\Xb_0) + Ct\big) - c \int_0^t\Exp[\abs{\Yb(s)}]\,\dr s,
\end{equation}
which holds for every $t\geq0$, where we used the same constants $c,C\in\Rbb^+$ as were stated in Lemma \ref{lemma}. Rearranging \eqref{eq:firstintheorem} we obtain
\begin{equation}\label{eq:thmmain}
\int_0^T\Exp[\abs{\Yb(s)}]\,\dr s \leq C_0 T,
\end{equation}
which holds for every $T\geq1$, where $C_0=\frac{1}{c}[2+f_{\mathrm{CD}}(\Xb_0)+C]>0$. Applying Markov's inequality on \eqref{eq:thmmain} we obtain that
\begin{equation}\label{eq:secondintheorem}
1-\frac{C_0}{N}\leq\frac{1}{T}\int_{0}^T\Prob(\abs{\Yb(t)}<N)\,\dr t
\end{equation}
holds for every odd integer $N>0$ and $T\geq1$. At this point we choose $N$ to be strictly greater than $C_0$ and we fix it. Finally we argue in the following manner to complete the proof: if $\wtild{\Yb}$ ($\wtild{\Xb}$), the process $\Yb$ ($\Xb$) as seen from its leftmost particle position, was not positive recurrent on the appropriate configuration space, then by Proposition \ref{veryuntight} the right-hand side of \eqref{eq:secondintheorem} would have a vanishing limit as $T\to+\infty$, which would lead to an obvious contradiction, since the left-hand side of \eqref{eq:secondintheorem} remains positive by choice.
\end{proof}
\begin{proof}[Proof of Corollary \ref{corollary:tightness}.]
The first assertion is a straightforward consequence of Theorem \ref{tightness}. For the second claim let
\[
V_N(\xb):=\min\big(f_{\mathrm{CD}}(\xb),N\big),
\]
where $N\in\Nbb$ and $\xb\in\Sc_{\hgt}$.
We denote $\wtild{\Gr}_{\hgt}$ by the infinitesimal generator of the process $(\wtild{\Xb}(t))_{t\geq0}$. Notice that
\begin{equation}\label{eq:ketGugyanaz}
\big(\wtild{\Gr}_{\hgt} f_{\mathrm{CD}}\big)(\wtild{\xb})=\big(\Grh f_{\mathrm{CD}}\big)(\xb)
\end{equation}
holds for every $\xb\in\Sc_{\hgt}$ and $\wtild{\xb}\in\wtild{\Sc}_{\hgt}$ by translation invariance, where configuration $\wtild{\xb}$ is $\xb$, as seen from its leftmost $(1+\ch(\yb))/2$ value ($\xb$ is connected to $\yb$ through \eqref{spatderiv}). From Lemma \ref{lemma} we know that $0\leq C-\big(\wtild{\Gr}_{\hgt}V_N\big)(\wtild{\xb})$, where $\wtild{\xb}\in\wtild{\Sc}_{\hgt}$. So taking into account Lemma \ref{lemma}, \eqref{eq:ketGugyanaz} and Fatou's lemma we obtain the following chain of inequalities:
\begin{align*}
0 < \sum_{\wtild{\yb}\in\wtild{\Sc}}\abs{\wtild{\yb}}\,\pi(\wtild{\yb})
&\leq
\sum_{\wtild{\xb}\in\wtild{\Sc}_{\hgt}}\!\bigg(\frac{C}{c}-\frac{1}{c}\big(\Grh f_{\mathrm{CD}}\big)(\wtild{\xb})\bigg)\,\pi(\wtild{\xb})\\
&=\sum_{\wtild{\xb}\in\wtild{\Sc}_{\hgt}}\!\lim_{N\to+\infty}\bigg[\frac{C}{c}-\frac{1}{c}\big(\wtild{\Gr}_{\hgt}V_{N}\big)(\wtild{\xb})\bigg]
\,\pi(\wtild{\xb})\\
&\leq \liminf_{N\to+\infty}
\bigg[\frac{C}{c}-\frac{1}{c}
\sum_{\wtild{\xb}\in\wtild{\Sc}_{\hgt}}\!\!\big(\wtild{\Gr}_{\hgt}V_{N}\big)(\wtild{\xb})\;\pi(\wtild{\xb})\bigg]=\frac{C}{c}<+\infty,
\end{align*}
where at the last equality we took advantage of that $\pi_{\bll}$ is invariant under $\wtild{\Gr}_{\hgt}$ and $V_{N}$ is a bounded function. The connection between configurations $\wtild{\yb}\in\wtild{\Sc}$ and $\wtild{\xb}\in\wtild{\Sc}_{\hgt}$ above is established by the relation \eqref{spatderiv}.

The only thing left unproven is the asymptotics on the width process, which indeed follows directly from formula \eqref{eq:firstintheorem}, that is
\[
\limsup_{t\to+\infty}\frac{\Exp[W(t)]}{t} \leq C<+\infty.
\]
\end{proof}

\bigskip

\begin{proof}[Proof of Theorem \ref{non-nearesttightness}.]
We are going to follow the lines of the proof of Theorem \ref{tightness}, through the proof of Propositions \ref{propwelldefined} and \ref{propvariance}, then the lemmas and Proposition \ref{veryuntight}. Clearly Lemma \ref{lemma} lies at the heart of the method, the rest can be carried out easily. First of all we plug $f_{\mathrm{CD}}$ into $\Grh^{\exl}$ to obtain
\begin{align}
(\Grh^{(\mathrm{LR})}f_{\mathrm{CD}})(\xb)
\leq \big(C-c\abs{\yb}\big)+\sum_{l=2}^{+\infty}l^2\wtild{B}(l),\label{lastsum}
\end{align}
where we used the fact that
\[
(\Grh^{\exl}f_{\mathrm{CD}})(\xb)\leq l^2\max_{i\in\Zbb}(b_{i,l}(\xb))\leq l^2 \wtild{B}(l)
\]
by \ref{A5} and the previously computed bound from Lemma \ref{lemma}. Using again \ref{A5}, the last sum of \eqref{lastsum} is bounded above by an absolute constant, that is
\[
(\Grh^{(\mathrm{LR})}f_{\mathrm{CD}})(\xb)\leq \bar{C}-c\abs{\yb}
\]
holds for some $c,\bar{C}\in\Rbb^+$. The next step is to prove that the width process of the parity conserving long range branching and annihilating random walk does not grow too fast. Let $(H_n)_{n\in\Nbb}$ be the following process:
\begin{equation*}
\Prob\bigg(H_{n+1}=
\left\{
  \begin{array}{l}
    \,2H_n\\
    \,H_n+1
  \end{array}
\right.
\;\bigg|\vphantom{\frac{1}{1}}\bigg.\,H_n\bigg)=
\left\{
  \begin{array}{l}
    \,\frac{D}{\alpha_1+D}; \\
    \,\frac{\alpha_1}{\alpha_1+D},
  \end{array}
\right.
\end{equation*}
where $H_0=w_0>1$, and $D=\sum_{l=2}^{+\infty}\wtild{B}(l)<+\infty$ by \ref{A5}. Now, take a sequence of independent exponentially distributed random variables $(\wtild{V}_n)_{n\in\Zbb^+}$ such that for an $n\in\Zbb^+$
\[
\Exp\big(\wtild{V}_{n}\big)=\frac{1}{2(\alpha_1+\alpha_2\bar{B}+D)},
\]
and set up $\wtild{S}_n$ to be $\wtild{V}_1+\cdots+\wtild{V}_n$, where $\wtild{S}_0=0$. Then for an $a>0$ let
\begin{equation*}
\tau_a=\min\{n\geq 0\,:\,\wtild{S}_n\geq a\}\mbox{ and }
\wtild{Q}^{w_0}(t):=H_{\tau_t}\quad(t\geq 0),
\end{equation*}
where $w_0=w(\Yb^{(\mathrm{LR})}(0))$ is the width of $\Yb^{(\mathrm{LR})}(0)\in\Sc$. Since the maximal rate with which the width grows by one is $2\alpha_1+2\alpha_2\bar{B}$, while it grows by more than one at most with rate $2\sum_{l=2}^{w_0+n}\wtild{B}_l\leq2 D$, we can set up a coupling such that $\wtild{Q}^{w_0}(t)$ stochastically dominates the width process of $\left(\Yb^{(\mathrm{LR})}(t)\right)_{t\geq0}$.

Turning to the proof of a claim similar to Lemma \ref{lemmagenerator} (see \eqref{lemmamarti}), by means of \eqref{ineq:unifint}, it is enough to prove that $\wtild{Q}^{w_0}(t)^2$ is of finite expectation. Of course even more is true, since for a given $r\in\Zbb^+$, the following estimate holds:
\begin{align*}
\Exp\big[\wtild{Q}^{w_0}(t)^r\big]
&\leq (w_0+1)^r\Exp\big[2^{r\tau_t}\big] \leq (w_0+1)^r\sum_{n\geq2}\Prob\big(\tau_t \geq \lip{\log_2(n)}/r\big)\\
&\leq (w_0+1)^r\sum_{n\geq2}\Prob\big(\wtild{V}_1+\cdots+\wtild{V}_{\lip{\log_2(n)}/r}<t\big)\\
&\leq (w_0+1)^r\exp(\lambda t)\sum_{n\geq 2}\Exp\big[\exp(-\lambda \wtild{V}_1)\big]^{\lip{\log_2(n)}/r}\\
&\leq (w_0+1)^r\exp(\lambda t)\sum_{n\geq 2} \bigg(\frac{2+2\bar{B}+2D}{2+2\bar{B}+2D+\lambda}\bigg)^{\lip{\log_2(n)}/r}\\
&\leq (w_0+1)^r\exp(\lambda t)\sum_{n\geq 1} \bigg(\frac{2^{r+1}(1+\bar{B}+D)}{2+2\bar{B}+2D+\lambda}\bigg)^{n/r},
\end{align*}
where $t\geq0$ and under way we used Markov's inequality with a fixed $\lambda>0$. One can choose $\lambda$ to be greater than $2^{2r}$ resulting that the sum in the last display be finite.

We leave Lemma \ref{proplemma} and Proposition \ref{veryuntight} to the reader as these statements carry over and can be proved along the same lines.
\end{proof}

\section*{Acknowledgement}

The authors thank valuable discussions on the topics of general interacting particle systems made with B\'alint T\'oth and J\'anos Kert\'esz, and on swapping voter models with Jan Swart. Support from the grants T\'AMOP-4.2.2.B-10/1--2010-0009, OTKA K100473 and K109684, and the Bolyai Scholarship of the Hungarian Academy of Sciences is gratefully acknowledged.

\addcontentsline{toc}{section}{References}

\end{document}